\newtheoremstyle{thmstyleone}
  {3pt}
  {3pt}
  {\itshape}
  {}
  {\bfseries}
  {.}
  {.5em}
  {}
\newtheoremstyle{thmstyletwo}
  {3pt}
  {3pt}
  {\itshape}
  {}
  {\normalfont}
  {.}
  {.5em}
  {}
\newtheoremstyle{thmstylethree}
  {3pt}
  {3pt}
  {\normalfont}
  {}
  {\bfseries}
  {.}
  {.5em}
  {}
\theoremstyle{thmstyleone}
\newtheorem{theorem}{Theorem}
\theoremstyle{thmstyletwo}
\newtheorem{lemma}{Lemma}
\theoremstyle{thmstylethree}
\newtheorem{definition}{Definition}
\theoremstyle{thmstylethree}
\newtheorem{remark}{Remark}
\theoremstyle{thmstylethree}
\theoremstyle{thmstylethree}
\newtheorem{assumption}{Assumption}
\newcommand{\norm}[1]{\left\| #1 \right\|}
\newcommand{\inner}[2]{\left\langle #1 \,,\, #2 \right\rangle}
\newcommand{\innerTp}[2]{\left( #1 \right)^\top #2 }
\newcommand{\X}{\mathcal{X}}
\newcommand{\C}{\mathcal{C}}
\newcommand{\Y}{\mathcal{Y}}
\newcommand{\E}{\mathbb{E}}
\newcommand{\V}{\mathbb{V}}
\newcommand{\Real}{\mathbb{R}}
\begin{document}

\title{Nonsmooth Projection-Free Optimization with Functional Constraints} 


\author*{\fnm{Kamiar} \sur{Asgari}}\email{Kamiaras@usc.edu}
\author{\fnm{Michael J.} \sur{Neely}}\email{Mjneely@usc.edu}


\affil{\orgdiv{Ming Hsieh Department of Electrical and Computer Engineering}, \orgname{University of Southern California}, \orgaddress{ \city{Los Angeles}, \state{CA}, \country{USA}}}

\abstract{This paper presents a subgradient-based algorithm for constrained nonsmooth convex optimization that does not require projections onto the feasible set. While the well-established Frank-Wolfe algorithm and its variants already avoid projections, they are primarily designed for smooth objective functions. In contrast, our proposed algorithm can handle nonsmooth problems with general convex functional inequality constraints. 
It achieves an $\epsilon$-suboptimal solution in $\mathcal{O}(\epsilon^{-2})$ iterations, with each iteration requiring only a single (potentially inexact) Linear Minimization Oracle (LMO) call and a (possibly inexact) subgradient computation. This performance is consistent with existing lower bounds.
Similar performance is observed when deterministic subgradients are replaced with stochastic subgradients. In the special case where there are no functional inequality constraints, our algorithm competes favorably with a recent nonsmooth projection-free method designed for constraint-free problems. Our approach utilizes a simple separation scheme in conjunction with a new Lagrange multiplier update rule.}

\keywords{Projection-free optimization, Frank-Wolfe method, Nonsmooth convex optimization, Stochastic optimization, Functional constraints}


\pacs[MSC Classification]{65K05, 65K10, 65K99, 90C25, 90C15, 90C25, 90C30.}

\maketitle

\section{Introduction}

Set $\V$ to be a finite-dimensional real inner product space, such as $\V=\Real^d$, for instance. Fix $m$ as a nonnegative integer. This paper considers the problem 
\begin{align*}
\mbox{Minimize:} \quad & f(x) \\
\mbox{Subject to:} \quad & h_i(x)\leq 0 \quad \forall i \in \{1, \ldots, m\}\\
&x \in \X
\end{align*}
where $f:\V\to\mathbb{R}$ and $h_i:\V\to\mathbb{R}$ for $i \in \{1, \ldots, m\}$ are convex continuous functions; $\X\subseteq\V$ is a compact and convex set. Such \emph{convex optimization problems} have applications in fields such as machine learning, statistics, and signal processing \cite{boyd_vandenberghe_2004,palomar2010convex,sra2012optimization}. 
While powerful numerical methods like the interior-point method and Newton's method are useful \cite{nesterov1994interior,alma991043822486603731}, they can be computationally intensive for large problems with many dimensions (such as $\V=\mathbb{R}^d$ where $d$ is large).
This has prompted interest in \emph{first-order methods} for large-scale problems \cite{beck2017first,lan2020first}. 

Many first-order methods solve subproblems that involve projections onto the feasible set $\X$. 
This projection step can be computationally expensive in high dimensions \cite{perez2022efficient,COMBETTES2021565}.
To avoid this, some first-order methods replace the projection with a linear minimization over the set $\X$ \cite{combettes2021frank,COMBETTES2021565, juditsky2016solving}. For a given $v\in\V$ the Linear Minimization Oracle (LMO) over the set $\X$ returns a point $x\in\X$ such that:
$$x\in\arg\min\{\inner{v}{x}:x\in\X\}.$$
The vast majority of such \emph{projection-free} methods treat smooth objective functions and/or do not have functional inequality constraints \cite{pmlr-v28-jaggi13,https://doi.org/10.1002/nav.3800030109,argyriou2014hybrid,LEVITIN19661,yurtsever2018conditional}. Our paper considers a simple black-box method for general (potentially nonsmooth) convex objective and constraint functions.
For a given $\epsilon>0$, the method yields an approximate solution within $\mathcal{O}(\epsilon)$ of optimality with $\mathcal{O}(\epsilon^{-2})$ iterations, with each iteration requiring one (possibly inexact) subgradient calculation and one (possibly inexact) linear minimization over the set $\X$. 
This performance matches the existing lower bounds for the number of subgradient calculations in first-order methods, which may involve projections, and the number of linear minimizations for projection-free methods, as established in prior research \cite{lan2014complexity,nemirovskij1983problem,bubeck2015convex,alma991043822486603731,nemirovski1995information}.

\subsection{Prior Work}

The \emph{Frank-Wolfe algorithm}, introduced in \cite{https://doi.org/10.1002/nav.3800030109}, pioneered the replacement of the projection step with a linear minimization.
Initially, this approach was developed for problems with polytope domains. The Frank-Wolfe algorithm is also known as the \emph{conditional gradient method} \cite{LEVITIN19661}.
Variants of Frank-Wolfe have found application in diverse fields, including structured support vector machines \cite{pmlr-v28-lacoste-julien13}, robust matrix recovery \cite{jing2023robust,mu2016scalable}, approximate Carathéodory problems \cite{combettes2023revisiting}, and reinforcement learning \cite{hazan2019provably,pmlr-v161-lin21b}. 
Besides their notable computational efficiency achieved through avoiding computationally expensive projection steps, Frank-Wolfe-style algorithms offer an additional advantage in terms of sparsity. This means that the algorithm iterates can be succinctly represented as convex combinations of several points located on the boundary of the relevant set. Such sparsity properties can be highly desirable in various practical applications \cite{pmlr-v28-jaggi13,NIPS2016_df877f38}.

Most Frank-Wolfe-style algorithms are only designed for smooth objective functions. Some of these approaches handle functional inequality constraints by redefining the feasible set as the intersection of the set $\X$ and the functional constraints, potentially eliminating the computational advantages of linear minimization over the feasible set by changing it.
Extending these methods to cope with nonsmooth objective and constraint functions is far from straightforward. A simple two-dimensional example in \cite{Nesterov2018} shows how convergence can fail when the basic Frank-Wolfe algorithm is used for nonsmooth problems (replacing gradients with subgradients). 

Initial efforts to extend Frank-Wolfe to nonsmooth problems can be found in \cite{White1993,ravi2017deterministic,cheung2018solving}. These methods require analytical preparations for the objective function and are applicable to specific function classes. They are distinct from black-box algorithms that work for general problems. 

Another idea, initially introduced by \cite{argyriou2014hybrid} and later revisited by \cite{yurtsever2018conditional}, involves smoothing the nonsmooth objective function using a Moreau envelope \cite{BSMF_1965__93__273_0}. This approach demands access to a \emph{proximity operator} associated with the objective function. While some nonsmooth functions have easily solvable proximity operators \cite{Nesterov2005}, many do not. In general, the worst-case complexity of a single proximal iteration can be the same as the complexity of solving the original optimization problem \cite{OPT-003}. An alternative concept presented in \cite{yurtsever2015universal} uses $\mathcal{O}(\epsilon^{-2})$ queries to a Fenchel-type oracle. However, the Fenchel-type oracle is only straightforward to implement for specific classes of nonsmooth functions.

Another approach, proposed by \cite{lan2014complexity}, utilizes random smoothing (for a general analysis of random smoothing, see \cite{doi:10.1137/110831659}). This method requires $\mathcal{O}(\epsilon^{-2})$ queries to an LMO, which was proven to be optimal in the same work\cite{lan2014complexity}. Unlike the previously mentioned methods, this algorithm only relies on access to a first-order oracle. However, it falls short in terms of the number of calls to the first-order oracle ($\mathcal{O}(\epsilon^{-4})$ compared to the optimal $\mathcal{O}(\epsilon^{-2})$ achieved by projected subgradient descent \cite{nemirovskij1983problem,bubeck2015convex}).

In an effort to adapt the Frank-Wolfe algorithm to an online setting, \cite{10.5555/3042573.3042808} successfully achieved a convergence rate of $\mathcal{O}(\epsilon^{-3})$ for both offline and stochastic optimization problems with nonsmooth objective function. This was accomplished with just one call to an LMO in each round.


In the context of projection-free methods for nonsmooth problems, the work \cite{thekumparampil2020projection} was the first to achieve optimal $\mathcal{O}(\epsilon^{-2})$ query complexity for both the LMO and the first-order oracle that obtains subgradients. This was made possible through the idea of approximating  the Moreau envelope. 

Our current paper introduces a different approach to achieve $\mathcal{O}(\epsilon^{-2})$ query complexity. In the special case of problems without functional inequality constraints, it competes favorably with the work \cite{thekumparampil2020projection}. Moreover, our algorithm distinguishes itself by its ability to handle functional inequality constraints, a feature not present in \cite{thekumparampil2020projection}.

There are many algorithms (though not necessarily in a projection-free manner) that address convex optimization with general nonsmooth objectives and constraint functions. These prior works explore various techniques, including cooperative subgradients \cite{polyak1973method, lan2016algorithms}, the level-set method \cite{lin2018level, alma991043822486603731, lin2018level-setpath}, reformulation as saddle point problems \cite{hamedani2021primal}, exact penalty and augmented Lagrangian methods \cite{bertsekas1997nonlinear, xu2021iteration, neely2019lagrangian, lan2016iteration}, Lyapunov drift-plus-penalty \cite{wei2018solving, 7798542, yu2017online}, bundle and fiber methods \cite{lemarechal1995new, karas2009bundle}, and constraint extrapolation \cite{boob2023stochastic}.



There have been several efforts to generalize projection-free algorithms to handle more than one set constraint. The Frank-Wolfe algorithm has been extended to stochastic affine constraints in \cite{wei2018primaldual}. More recently, \cite{lan2021conditional,lee2023projectionfree} have developed projection-free methods for problems with functional constraints. 
However, unlike our approach, which assumes no special properties of the functions, all mentioned methods assume the objective and constraint functions are either smooth or structured nonsmooth.

\subsubsection{Other Projection-Free Methods}
The predominant body of literature on projection-free methods, including this paper, typically assumes the existence of a Linear Minimization Oracle (LMO) for the feasible set $\X$. However, recent alternative approaches in \cite{NIPS2012_c52f1bd6, pmlr-v89-levy19a, lee2018efficient, pmlr-v178-mhammedi22a, garber2022new, lu2023projection, garber2023new, gatmiry2023projectionfree, garber2023projection,grimmer2022radial,grimmer2023radial} utilize various techniques, such as separation Oracles, membership Oracles, Newton iterations, and radial dual transformations. It's worth noting that some of these oracles can be implemented using others, as demonstrated, for instance, in \cite{lee2018efficient}. Nevertheless, none of these approaches can be considered universally superior to others in terms of implementation efficiency.

\subsection{Our Contribution}

This paper introduces a projection-free algorithm designed for general convex optimization problems, with both feasible set and functional constraints. Our approach has mathematical guarantees to work where both the objective and constraint functions are nonsmooth, relying on access to only possibly inexact subgradient oracles for these functions. While previous projection-free methods in the literature have engaged with similar optimization challenges, they have primarily not included functional constraints or have been limited to smoothable nonsmooth functions. To the best of our knowledge, our algorithm is the first to address this category of problems in a projection-free manner comprehensively.

Our algorithm achieves an optimal performance of \(\mathcal{O}(\epsilon^{-2})\), notably even in scenarios where the LMO exhibits imprecision. This aspect is particularly crucial considering that for certain sets, the inexact LMO offers the computational advantage over projection onto those sets (for example, see \cite{pmlr-v28-jaggi13, combettes2021complexity}).


The derivation of our algorithm is notably distinct, as it more closely resembles subgradient-descent-type algorithms rather than those of the Frank-Wolfe-type. We start with a simple separation idea that enables each iteration to be separated into: (i) A linear minimization over the feasible set $\X$; (ii) 
A projection onto a much simpler set $\Y \subseteq \V$ (this includes using $\Y = \V$, for which the projection step is trivial).\footnote{See Appendix~\ref{apx: Y} for more discussion on choosing the set $\Y$.}
This separation sets the stage for a unique \emph{Lagrange multiplier update rule} of the form
\begin{equation*}
    W_{i,t+1} = \max\left\{W_{i,t}+h_i(y_{t})+\inner{h^\prime_{i}(y_{t})} {y_{t+1}-y_{t}},\left[-h_i(y_{t+1})\right]_+\right\}
    .
\end{equation*}
Traditional Lagrange multiplier updates replace the right-hand-side with a maximum with $0$, rather than a maximum with $\left[-h_i(y_{t+1})\right]_+$ (see, for example, the classic update rule for the dual subgradient algorithm in \cite{bertsekas2009convex,bertsekas2015convex,boyd_vandenberghe_2004}). Our update 
is inspired by a related update used in \cite{neely2019lagrangian}
for a different class of problems. However, the update in \cite{neely2019lagrangian} takes a max with $-h_i(y_{t+1})$ rather than its positive part. Our approach has advantages in the projection-free scenario and may have applications in other settings.

\subsection{Applications}
The proposed algorithm may be useful in problems where constraints are linear, and the objective function is nonsmooth. For example, in network optimization, the constraints model channel capacity limits, which can be expressed as linear inequalities and equalities \cite{kelly1998rate,bertsekas1991linear}. The directed graph structure of the network establishes a flow polytope constraint; consequently, minimizing a linear objective over this set involves identifying the minimum weight path based on the assigned edge weights. The flow polytope is one of the sets where linear minimization is significantly cheaper than projection \cite{combettes2021complexity, leblanc1985improved}. The objective function, which describes utility and fairness \cite{bertsekas1998network,neely2010stochastic}, can be nonsmooth due to piecewise linearities and/or being the maximum of multiple convex functions.


Our algorithm also holds potential for application in Quantum State Tomography (QST), which presents a nonsmooth and stochastic problem. Earlier work employing Frank-Wolfe-type methods for this issue has utilized smoothing techniques \cite{hazan2008sparse}.



Robust Structural Risk Minimization is another class of problems that can benefit from our algorithm, mainly when sparsity is crucial. Our algorithm is well-equipped to handle the inherent stochastic characteristics of the problem and the nonsmooth nature of loss functions like the $l_1$-norm or the Hinge function, which are commonly utilized for robustness purposes \cite{hampel2005robust,huber1992robust,lerasle2019selected,cortes1995support}. Furthermore, the algorithm is adaptable to complex scenarios such as Fused Lasso regression \cite{hastie2015statistical}, enforcing additional desired structures through functional constraints.

\subsection{Notation}
The set of positive real numbers are denoted as $\Real_+ \subseteq\Real$.
Our underlying space for optimization is denoted as $\V$ and is assumed to be a 
finite-dimensional inner product space with a general inner product 
$\inner{v}{u}$ 
and a norm determined by the inner product, i.e., $\|\cdot\|=\sqrt{\inner{\cdot}{\cdot}}$. 
Our examples consider $\V = \Real^d$ with inner
product given by the dot product $\inner{v}{u}\coloneqq v^\top u$, and $\V =\Real^{q\times p}$ (for matrices) with inner product $\inner{v}{u}\coloneqq\operatorname{Tr}\left(v^\top u\right)$. The positive part of a real number $x$ is denoted $[x]_+\coloneqq\max\{0,x\}$ and is also applied element-wise for elements of $\mathbb{R}^m$. The subdifferential of a function $f$ at point $x$ is denoted by $\partial f(x)$, with $f^\prime(x)$ representing a particular (arbitrary) subgradient of $f$ at $x$.

\section{Formulation and Problem Separation}
For a finite-dimensional inner product space $\V$ and a compact set $\X \subseteq \V$:
\begin{align*}
\tag{P1}
\label{eq:problem2}
\mbox{Minimize:} \quad & f(x) \\
\mbox{Subject to:} \quad & h_i(x)\leq 0 \quad \forall i \in \{1, \ldots, m\}\\
&x \in \X
\end{align*}
where $f:\V\to\mathbb{R}$ and $h_i:\V\to\mathbb{R}$ for $i \in \{1, \ldots, m\}$  are proper, continuous, convex functions.
Let $\X^*\subseteq\X$ be the set of optimal solutions. It is assumed that $\X^*$ is nonempty. Let $f^*$ represent the optimal objective value. It follows by compactness of $\X$ that $f^*$ is finite.

The primary goal is to find an $\epsilon$-suboptimal solution to Problem~\eqref{eq:problem2}. 
This can involve numerical steps that make use of oracles that return random vectors. 
The output of the algorithm is the construction of a random vector $\Bar{x}\in\X$ such that
\begin{equation*}
    \E\{f(\Bar{x})\}-f^*
    \leq \mathcal{O}(\epsilon)
    ,
\end{equation*}
and such that
\begin{equation*}
    \E\left\{\right\|\left[h(\Bar{x})\right]_+\left\|_2\right\}
    \equiv
    \E\left\{\sqrt{\sum_{i=1}^m \left(\max\left\{0,\,h_i(\Bar{x})\right\}\right)^2 }\right\}
    \leq \mathcal{O}(\epsilon)
    ,
\end{equation*}
where $h(x) = (h_1(x),\ldots,h_m(x))^\top$ and $\|\cdot\|_2$ refers to the standard $l_2$-norm defined on vector space $\Real^m$. When the oracles are deterministic the expectations can be removed. 

\begin{assumption}
    \label{assum:X_bounded}
    The feasible set $\X$ is a compact convex subset of $\V$, and there is a known bound $D$ on the diameter of the set $\X$, such that
    \begin{equation*}
    \|x - y\| \leq D \quad \forall x, y \in \X
    \end{equation*}
\end{assumption}

\begin{assumption}
    \label{assum:Lagrange multiplier}
    There exists a vector (Lagrange multiplier) $\mu\in\Real_+^{m}$ such that: 
    \begin{equation} \label{eq:mu-LM}
    f^*\leq f(x)+ \mu^\top h(x) \quad \forall x\in\X
    .
    \end{equation} 
\end{assumption}

\begin{assumption}
    \label{assum:oracles} 
    The algorithm has access to the following computation oracles: 
    \begin{enumerate}[label=\textbf{3.\roman*},ref=3.\roman*]

        \item \label{assum:LMO} 
        Inexact Linear Minimization Oracle \textsc{(In-LMO)}:\footnote{
        This oracle is formulated similarly to the approximate oracle described in \cite{pmlr-v28-jaggi13}. Note that for a fixed \(\delta\), the computational cost of running \(\textsc{In-LMO}_\X\{v;\delta\}\) may increase with the size of \(v\). For further discussion, see Appendix~\ref{apx: In-lmo}.
        } 
        Given a unit vector \( v \in \V \) and a desired error upper bound \( \delta \geq 0 \), this oracle returns a random point
        $x\gets\textsc{In-LMO}_\X\{v;\delta\}$ such that $x$ belongs to the set $\X$ and
        \begin{equation*}
            \E\left\{\inner{x}{v}\right\} \leq \inner{y}{v}+\delta \quad \forall y \in\X
        \end{equation*}

        \item \label{assum:PO}
        Projection Oracle \textsc{(PO)}:  There is a closed convex set $\Y\subseteq \V$ such that $\X\subseteq \Y$. Given $v \in \V$, this oracle returns a point
        $\textsc{PO}_\Y\{v\} \in \Y$
        such that:
        \begin{equation}
        \label{eq:PO_def}
            \textsc{PO}_\Y\{v\} 
            \coloneqq
            \arg\min_{y\in\Y}\|v-y\|
            .
        \end{equation}

        \item \label{assum:SFO}
        Stochastic Subgradient Oracle: Given $y \in \Y$, this oracle independently returns $m+1$ random vectors $s, g_1, \ldots, g_m$ such that 
        \begin{align*}
         &\E\left\{s \middle| y\right\} \in \partial f(y) 
         ,\\*
         &\E\left\{g_i \middle| y\right\} \in \partial h_i(y) \quad \forall i \in \{1, \ldots, m\} 
         .
        \end{align*}
        Assume there are known real-valued constants $L,G\geq0$ and unknown real-valued constants $G_1,\ldots,G_m\geq0$ such that:\footnote{This assumption may fail in some cases. See Appendix~\ref{apx: Y} for further remarks.}
        \begin{align*}
            &\norm{g_i} \leq G_i \quad \forall i \in \{1, \ldots, m\} \\*
            &\sum_{i=1}^m G_i^2 \leq G^2\\*
            &\sqrt{\E\left\{\norm{s}^2 \middle| y\right\}} \leq L 
        \end{align*}
        so that the $g_i$ vectors are deterministically bounded while the $s$ vector is required only to have a finite second moment. It is worth noting that by the law of iterated expectation, we obtain: $\sqrt{\E\left\{\norm{s}^2\right\}} \leq L$.

        \item \label{assum:ZO} 
         Function Value Oracle: This oracle takes a point $y \in \Y$ and provides the values $h_i(y)$ for $i \in \{1, \ldots, m\}$ as its output.
\end{enumerate}
\end{assumption}

\begin{assumption}
    \label{assum:x0}
    The algorithm has access to an initial point belonging to the set $\X$, specifically, $x_1 \in \X$.
\end{assumption}

\begin{remark}
     To satisfy Assumption~\ref{assum:x0}, a straightforward selection for \(x_1\) is \(x_1 \gets \textsc{In-LMO}_\X\{v; \delta\}\), where \(v\) could be as simple as \(v = \mathbf{0}\). Nevertheless, \(x_1\) could also be any other point within \(\X\) that the practitioner considers to be closer to the optimal set \(\X^*\).
\end{remark}

\begin{definition}
    Let $\V$ and $\V^\prime$ be two vector spaces endowed with their respective norms $\norm{\cdot}$ and $\norm{\cdot}^\prime$. A function $r: \Y \to \V^\prime$ is termed Lipschitz continuous over the set $\Y \subseteq \V$ with a Lipschitz constant $\zeta > 0$ if it satisfies the condition that for every pair of points $x$ and $y$ in $\Y$, the following inequality holds:
    
    \begin{equation*}
    \norm{r(x) - r(y)}^\prime \leq \zeta\norm{x - y}.
    \end{equation*}
\end{definition}

\begin{lemma}\label{lem:h_lip}
    If Assumption~\ref{assum:SFO} is met, then the functions $f: \V \to \Real$, $h: \V \to \Real^m$, and $h_i: \V \to \Real$ (for all $i \in \{1, \ldots, m\}$) demonstrate Lipschitz continuity over the set $\Y$ with Lipschitz constants not exceeding $L$, $G$, and $G_i$, respectively.
\end{lemma}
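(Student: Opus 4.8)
The plan is to derive Lipschitz continuity of each function from the boundedness of its subgradients on $\Y$, which is the standard route: a convex function whose subgradients are uniformly bounded by a constant on a convex set is Lipschitz with that same constant. First I would fix two points $x, y \in \Y$. Using the subgradient inequality for the convex function $f$ at the point $y$ with an arbitrary subgradient $f'(y) \in \partial f(y)$, I get $f(x) - f(y) \geq \inner{f'(y)}{x - y}$; applying the same inequality with the roles of $x$ and $y$ swapped, $f(y) - f(x) \geq \inner{f'(x)}{y - x}$. Combining these two bounds and applying Cauchy–Schwarz gives $|f(x) - f(y)| \leq \max\{\norm{f'(x)}, \norm{f'(y)}\}\,\norm{x - y}$, so it remains only to bound the norm of a subgradient.

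The one subtlety is that Assumption~\ref{assum:SFO} bounds a \emph{stochastic} subgradient $s$ (resp. $g_i$) rather than a deterministic element of $\partial f(y)$ (resp. $\partial h_i(y)$). For $h_i$, the hypothesis $\norm{g_i} \leq G_i$ holds deterministically and $\E\{g_i \mid y\} \in \partial h_i(y)$, so by Jensen's inequality applied to the convex norm, any such subgradient $\E\{g_i \mid y\}$ satisfies $\norm{\E\{g_i \mid y\}} \leq \E\{\norm{g_i} \mid y\} \leq G_i$; hence \emph{some} subgradient of $h_i$ at $y$ has norm at most $G_i$. Strictly, to conclude the Lipschitz bound from the two-sided argument above I want every subgradient to be bounded, but in fact the displayed two-sided inequality only needs \emph{one} subgradient at each point: picking $f'(y) = \E\{s \mid y\}$ and $f'(x) = \E\{s \mid x\}$ in the argument above already yields $|f(x) - f(y)| \leq \max\{\norm{\E\{s\mid x\}}, \norm{\E\{s \mid y\}}\}\,\norm{x-y}$. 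For $f$, Jensen gives $\norm{\E\{s \mid y\}} \leq \E\{\norm{s} \mid y\} \leq \sqrt{\E\{\norm{s}^2 \mid y\}} \leq L$ (the middle step is Jensen again, or Cauchy–Schwarz), so $f$ is $L$-Lipschitz on $\Y$. Identically, $h_i$ is $G_i$-Lipschitz on $\Y$.

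Finally, for the vector-valued map $h = (h_1, \ldots, h_m)^\top$ with the $l_2$-norm on $\Real^m$, I would write $\norm{h(x) - h(y)}_2^2 = \sum_{i=1}^m (h_i(x) - h_i(y))^2 \leq \sum_{i=1}^m G_i^2 \norm{x - y}^2 \leq G^2 \norm{x-y}^2$, using the componentwise bound just established and then the hypothesis $\sum_{i=1}^m G_i^2 \leq G^2$; taking square roots gives the claim. I expect the main (very mild) obstacle to be purely expository: being careful that the subgradient-boundedness hypothesis is stated for stochastic oracles, so one must pass through Jensen's inequality to extract a bounded \emph{deterministic} subgradient before invoking the classical "bounded subgradients $\Rightarrow$ Lipschitz" fact; there is no real analytic difficulty.
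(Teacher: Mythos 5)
Your proof is correct and follows essentially the same route as the paper's: use the subgradient inequality at each point with the subgradient $\E\{s\mid y\}\in\partial f(y)$ (resp.\ $\E\{g_i\mid y\}\in\partial h_i(y)$), bound its norm by $L$ (resp.\ $G_i$) via Jensen's inequality applied to the stochastic oracle bounds, and then assemble the bound for $h$ by summing the squared componentwise estimates and using $\sum_i G_i^2\le G^2$. The subtlety you flag — that the oracle bounds are on stochastic subgradients and one must pass through Jensen to get a bounded deterministic subgradient — is exactly the point the paper's proof also addresses.
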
 

\begin{proof}
    See Appendix~\ref{apx: proofs}. 
\end{proof}

\subsection{Problem Separation}

Recall that $\X\subseteq \Y$. It is clear that Problem~\eqref{eq:problem2} is equivalent to 
\begin{align*}
\tag{P2}
\label{eq:problem3}
\text {Minimize: } & \quad f(y) \\*
\text {Subject to: } & \quad h_i(y) \leq 0 \quad \forall i\in\{1, \ldots, m\}\\*
&\quad y=x \\*
&\quad x \in \X \quad ; y \in \Y
.
\end{align*}

Problem~\eqref{eq:problem3} is said to have a Lagrange multiplier vector $(\mu, \lambda)$, where $\mu \in \mathbb{R}^m_{+}$ and $\lambda \in \V$, if
\begin{equation} \label{eq:said-LM} f^*\leq f(y)+\mu^\top h(y)+\inner{\lambda}{x-y} \quad \forall x \in \X, y \in \Y .
\end{equation}
Note that the right-hand-side of the above inequality uses the general inner product in $\V$ for describing the contribution of the $\lambda$ multiplier.
The next lemma shows that the new Problem~\eqref{eq:problem3} has Lagrange multipliers whenever the original Problem~\eqref{eq:problem2} does, and the new multipliers can be described in terms of the originals. The key connection between the two problems arises by considering subgradients of the convex function $v:\V\to \mathbb{R}$ defined by 
\begin{equation} \label{eq:v-def}
v(x) = f(x) + \mu^{\top}h(x) \quad \forall x \in \V
.
\end{equation}
where $\mu$ is a Lagrange multiplier of Problem~\eqref{eq:problem2}. 
Note that the real-valued convex functions $f, h_i, v$ have domains equal to the entire space $\V$.

\begin{lemma}[Lagrange Multipliers]
\label{lem:Lagrange}
Suppose the original Problem~\eqref{eq:problem2} has a Lagrange multiplier $\mu\in \mathbb{R}^m_{+}$ (so that Assumption~\ref{assum:Lagrange multiplier} holds), and further assume Assumption~\ref{assum:SFO} is satisfied. Fix $x^*\in \X^*$. Then there exists a $\lambda \in \V$ such that the pair  $(\mu,\lambda)$ forms a Lagrange multiplier for Problem~\eqref{eq:problem3}, meaning that \eqref{eq:said-LM} holds, and additionally satisfies: 
\begin{equation}\label{eq:lambda<}
\|\lambda\|\leq L+G\|\mu\|_2.
\end{equation}
\end{lemma}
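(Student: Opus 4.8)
The plan is to construct the multiplier $\lambda$ explicitly as (the negative of) a subgradient of the function $v(x) = f(x) + \mu^\top h(x)$ evaluated at the fixed optimal point $x^* \in \X^*$, and then verify both the Lagrange multiplier inequality \eqref{eq:said-LM} and the norm bound \eqref{eq:lambda<}. First I would observe that since $x^*$ minimizes $f$ over the feasible region and $\mu$ is a Lagrange multiplier for \eqref{eq:problem2}, Assumption~\ref{assum:Lagrange multiplier} gives $f^* \le f(x) + \mu^\top h(x) = v(x)$ for all $x \in \X$; moreover evaluating at $x = x^*$ and using $h(x^*) \le 0$, $\mu \ge 0$ gives $v(x^*) \le f(x^*) = f^*$, so in fact $x^*$ is a global minimizer of $v$ over $\X$. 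This is the standard complementary-slackness-type observation, and it is what lets me bring subgradients into play.

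Next, I would pick a subgradient $v'(x^*) \in \partial v(x^*)$ and set $\lambda = -v'(x^*)$. Since $x^* \in \X \subseteq \Y$, the subgradient inequality for the convex function $v$ on all of $\V$ reads $v(y) \ge v(x^*) + \inner{v'(x^*)}{y - x^*}$ for every $y \in \Y$, i.e. $v(y) + \inner{\lambda}{x^* - y} \ge v(x^*)$. It remains to replace $v(x^*)$ by $f^*$ on the right and to handle the $x$-dependence in \eqref{eq:said-LM}. For the former, I use $v(x^*) = f^*$ from the previous paragraph (combining $v(x^*) \le f^*$ with $v(x^*) \ge f^* $, the latter being the $x = x^*$ case of Assumption~\ref{assum:Lagrange multiplier}, or more simply just $f(x^*)+\mu^\top h(x^*)$ with $h(x^*)\le 0$ needs care — but since $x^*$ is feasible and optimal, $v(x^*) = f^* + \mu^\top h(x^*) \le f^*$, and we already have the reverse, so equality holds and in particular $\mu^\top h(x^*) = 0$). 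For the $x$-dependence: the subgradient inequality gives $f^* \le f(y) + \mu^\top h(y) + \inner{\lambda}{x^* - y}$; I then need $f^* \le f(y) + \mu^\top h(y) + \inner{\lambda}{x - y}$ for all $x \in \X$. This follows because $\lambda = -v'(x^*)$ and $x^*$ minimizes $v$ over $\X$, so by the optimality condition $\inner{v'(x^*)}{x - x^*} \ge 0$, i.e. $\inner{\lambda}{x - x^*} \le 0$, hence $\inner{\lambda}{x - y} = \inner{\lambda}{x^* - y} + \inner{\lambda}{x - x^*} \le \inner{\lambda}{x^* - y}$ — wait, that inequality goes the wrong way; I would instead just note $\inner{\lambda}{x - x^*}\le 0$ means replacing $x^*$ by $x$ only decreases the right-hand side, so I actually want the bound in the other direction. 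The cleanest route is: $f^* \le f(y) + \mu^\top h(y) + \inner{\lambda}{x^* - y}$ and separately $0 \le \inner{v'(x^*)}{x-x^*} = \inner{\lambda}{x^*-x}$, and adding gives $f^* \le f(y)+\mu^\top h(y) + \inner{\lambda}{x-y}$, which is exactly \eqref{eq:said-LM}. Good.

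Finally, for the norm bound \eqref{eq:lambda<}, I would use the standard fact that a subgradient of a sum is a sum of subgradients, so $v'(x^*) = s^* + \sum_{i=1}^m \mu_i g_i^*$ for some $s^* \in \partial f(x^*)$ and $g_i^* \in \partial h_i(x^*)$. By Lemma~\ref{lem:h_lip} (Lipschitz continuity from Assumption~\ref{assum:SFO}), every element of $\partial f(x^*)$ has norm at most $L$ and every element of $\partial h_i(x^*)$ has norm at most $G_i$, so $\|\lambda\| \le \|s^*\| + \sum_i \mu_i \|g_i^*\| \le L + \sum_i \mu_i G_i \le L + \|\mu\|_2 \sqrt{\sum_i G_i^2} \le L + G\|\mu\|_2$ by Cauchy–Schwarz and the assumption $\sum_i G_i^2 \le G^2$. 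The main obstacle — really the only subtle point — is being careful that the subgradient of $v$ at $x^*$ picked out for the norm estimate is the same one used to define $\lambda$, and that Lemma~\ref{lem:h_lip} is what supplies the per-component norm bounds on $\partial f$ and $\partial h_i$ (one should double-check that the Lipschitz-constant-implies-bounded-subgradients direction is available on all of $\V$, which it is since $f, h_i, v$ are real-valued convex on the whole space). Everything else is bookkeeping with the subgradient inequality and Cauchy–Schwarz.
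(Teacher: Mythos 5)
Your overall strategy matches the paper's: complementary slackness gives $v(x^*)=f^*$ and $\mu^\top h(x^*)=0$, a subgradient of $v$ at the constrained minimizer $x^*$ supplies $\lambda$, the first-order optimality condition over $\X$ handles the $x$-dependence in \eqref{eq:said-LM}, and the Minkowski-sum decomposition of $\partial v(x^*)$ plus Cauchy--Schwarz gives the norm bound. Two smaller points first. You set $\lambda=-v'(x^*)$, but every displayed inequality you subsequently use (e.g.\ $v(y)+\inner{\lambda}{x^*-y}\geq v(x^*)$, and the final addition step, which requires $\inner{\lambda}{x-x^*}\geq 0$ rather than $\inner{\lambda}{x^*-x}\geq 0$) is the one valid for $\lambda=+v'(x^*)$; the paper takes $\lambda$ to be the subgradient itself, and that is the consistent choice. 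Also, you ``pick a subgradient $v'(x^*)\in\partial v(x^*)$'' and later invoke $\inner{v'(x^*)}{x-x^*}\geq 0$ for all $x\in\X$; for a \emph{constrained} minimum this variational inequality is satisfied by \emph{some} element of $\partial v(x^*)$, not by an arbitrary one, so you must select that particular subgradient (the paper cites Prop.\ B.24f of Bertsekas for its existence and explicitly warns that not all subgradients have this property). Both issues are fixable bookkeeping.

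The genuine gap is in the norm bound. You assert that every element of $\partial f(x^*)$ has norm at most $L$ and every element of $\partial h_i(x^*)$ has norm at most $G_i$, justifying this by saying the Lipschitz-implies-bounded-subgradients direction ``is available on all of $\V$ since $f,h_i,v$ are real-valued convex on the whole space.'' That inference is false: Lemma~\ref{lem:h_lip} only gives Lipschitz continuity of $f$ and $h_i$ \emph{on the set $\Y$}, and a function that is $L$-Lipschitz on $\Y$ can have subgradients of arbitrarily large norm at a point on the boundary of $\Y$. For example, with $\V=\Real$, $\Y=[0,1]$, and $f(x)=\max\{0,10(x-1)\}$, the restriction of $f$ to $\Y$ is $0$-Lipschitz, yet $\partial f(1)=[0,10]$. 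Since $\Y$ need not equal $\V$ and $x^*$ may lie on the boundary of $\Y$, your bound on $\|\lambda\|$ does not follow as written. The paper resolves exactly this: when $x^*$ is interior to $\Y$ the bound is immediate, and otherwise it replaces $f$ and $h_i$ by McShane--Whitney extensions that are globally Lipschitz with the same constants $L$ and $G_i$, reruns the entire construction with the extended functions (which agree with the originals on $\Y\supseteq\X$, so \eqref{eq:said-LM} is unaffected), and only then reads off $\|\lambda\|\leq L+G\|\mu\|_2$. Your proof needs this step, or some equivalent argument producing a subgradient that simultaneously satisfies the variational inequality and the norm bound.
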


\begin{proof}
    Since $\mu$ is a Lagrange multiplier of the original Problem~\eqref{eq:problem2}, we have, by \eqref{eq:mu-LM} and the definition of function $v$: 
    \begin{equation} \label{eq:LM-proof}
    v(x) \geq f^* \quad \forall x \in \X.
    \end{equation} 
    Applying \eqref{eq:LM-proof} to the point $x^*\in \X$ gives
    \begin{align*}
        f^* &\leq v(x^*) \\*
        &\overset{\text{(a)}}{=}f(x^*) + \mu^{\top}h(x^*)\\*
        &= f^* + \mu^{\top}h(x^*)\\*
        &\overset{\text{(b)}}{\leq} f^* 
    \end{align*}
    where (a) holds by definition of $v$ in \eqref{eq:v-def}; (b) holds because $\mu\geq 0$ and $h(x^*)\leq 0$ (where these vector inequalities are taken entrywise). The above chain of inequalities simultaneously proves: 
    \begin{align}
    &v(x^*)=f^* \label{eq:sim1}\\*
    &\mu^{\top}h(x^*)=0 \label{eq:sim2}
    \end{align}
    The equality \eqref{eq:sim1} together with \eqref{eq:LM-proof} implies that $x^*$ minimizes the convex function $v:\V\to\mathbb{R}$ over the restricted set of all $x \in \X$. Thus, Prop B.24f from \cite{bertsekas1997nonlinear} ensures \emph{there exists} a subgradient $\lambda \in \partial v(x^*)$ that satisfies:
    \begin{equation} \label{eq:property}
        \inner{\lambda}{x-x^*}\geq 0 \quad \forall x \in \X.
    \end{equation}  
    (The property \eqref{eq:property} is not necessarily satisfied by \emph{all} subgradients in $\partial v(x^*)$). Fix $y\in \V$ and $x\in \X$. Since $\lambda \in \partial v(x^*)$ we have, by the definition of a subgradient:
    \begin{equation*}
    v(y) \geq v(x^*)+\inner{\lambda}{y-x^*} 
    \end{equation*}
    Substituting the definition of $v$ in \eqref{eq:v-def} into the above inequality gives
    \begin{align*}
    f(y) + \mu^{\top}h(y) &\geq f(x^*)+\mu^{\top}h(x^*) + \inner{\lambda}{y-x^*} \\*
    &\overset{\text{(a)}}{=} f^* + \inner{\lambda}{y-x^*}\\*
    &=f^* + \inner{\lambda}{y-x} + \inner{\lambda}{x-x^*}\\*
    &\overset{\text{(b)}}{\geq} f^* + \inner{\lambda}{y-x}
    \end{align*}
    where (a) holds by \eqref{eq:sim2}; (b) holds by \eqref{eq:property}. This holds for all $y \in \V$ and $x \in \X$. Since $\Y \subseteq \V$, it certainly holds for all $y \in \Y$ and $x \in \X$. This proves the desired Lagrange multiplier inequality \eqref{eq:said-LM}.

     This particular $\lambda \in \partial v(x^*)$ has the form 
    \begin{equation} \label{eq:lambda-LM}
        \lambda = f^\prime(x^*)+\sum_{i=1}^m \mu_i h_i^\prime(x^*)
        .
    \end{equation}
     for some particular subgradients in $\partial f(x^*)$ and $\partial h_i(x^*)$ for $i \in \{1, \ldots, m\}$. This follows by the fact that $v$ is a sum of convex functions and hence $\partial v(x^*)$ is the Minkowski sum of the subdifferentials of those component functions (see, for example, Prop B.24b \cite{bertsekas1997nonlinear}). 
     
    Taking the norm of both sides of \eqref{eq:lambda-LM} and applying the triangle inequality (noting that $\mu_i \geq 0$), we obtain:
    \begin{equation*}
        \|\lambda\| = \norm{f^\prime(x^*)+\sum_{i=1}^m \mu_i h_i^\prime(x^*)}
        \leq
        \norm{f^\prime(x^*)}+\sum_{i=1}^m \mu_i \norm{h_i^\prime(x^*)}
    \end{equation*}
    Adding the Cauchy–Schwarz inequality, we get
    \begin{equation}\label{eq:lambda_cachy}
        \|\lambda\| 
        \leq
        \norm{f^\prime(x^*)}+\norm{\mu_i}_2 \sqrt{\sum_{i=1}^m \norm{h_i^\prime(x^*)}^2}
    \end{equation}
    Here we need to consider two cases:
    \begin{enumerate}[label=\textbf{\roman*.}]
        \item 
        If $x^*$ belongs to the interior of the set $\Y$, then 
        Lipschitz continuity of $f$ and $h_i$ proved in Lemma~\ref{lem:h_lip} implies that (see, for example, part (ii) of Theorem 3.61 \cite{beck2017first}):
        \begin{align*}
            \sum_{i=1}^m\norm{h_i^\prime(x^*)}^2&\leq G^2\\*
            \norm{f^\prime(x^*)}&\leq L
        \end{align*}
        which concludes the proof.
        
        \item 
        If $x^*$ does not belong to the interior of the set $\Y$, then we cannot directly use Lipschitz continuity to get a bound of the subgradients. The reason is that the Lipschitz continuity of a function over $\Y$ does not guarantee the boundedness of every subgradient by the Lipschitz constant. We employ the \emph{McShane-Whitney extension theorem} \cite{lmcs:6105} to overcome this. Part of this theorem establishes that if $r:\Y\to\Real$ is a convex and $\zeta$-Lipschitz continuous function defined on the convex set $\Y$, then there exists an extended convex function $\tilde{r}:\V\to\Real$ which satisfies the following conditions:
        \begin{enumerate}[label={(\alph*)},ref=(\alph*)]
            \item $r(x)=\tilde{r}(x)$ for all $x\in\Y$.\label{part1-extention}
            \item For any $x\in\V$, all subgradients $s\in\partial\tilde{r}(x)$ have $\|s\|\leq\zeta$.\label{part2-extention}
        \end{enumerate}
        By part~\ref{part1-extention} of this theorem, our proof until \eqref{eq:lambda_cachy} can be stated using the extended functions $\tilde{f}$ and $\tilde{h}$. Thus, we can conclude that there exists a $\lambda\in\partial(\tilde{f}+\mu^\top \tilde{h})(x^*)$ such that the pair $(\mu,\lambda)$ forms a Lagrange multiplier for Problem~\eqref{eq:problem3}, and this particular $\lambda$ has the form 
        \begin{equation*}
            \lambda = \tilde{f}^\prime(x^*)+\sum_{i=1}^m \mu_i \tilde{h}_i^\prime(x^*),
        \end{equation*}
        for some particular subgradients in $\partial \tilde{f}(x^*)$ and $\partial \tilde{h}_i(x^*)$ for $i \in \{1, \ldots, m\}$. 
        Part~\ref{part2-extention} of the theorem implies that the functions $\tilde{f}:\V\to \Real$ and $\tilde{h}_i:\V\to \Real$ (for all $i\in\{1,\ldots,m\}$) demonstrate Lipschitz continuity over the set $\V$, including the boundary of the set $\X$, with Lipschitz constants not exceeding $L$ and $G_i$, respectively. Thus,
        \begin{align*}
            \sum_{i=1}^m\norm{\tilde{h}_i^\prime(x^*)}^2&\leq G^2,\\*
            \norm{\tilde{f}^\prime(x^*)}&\leq L.
        \end{align*}
    \end{enumerate}
    This concludes the proof.
\end{proof}

\subsection{Algorithm Intuition}

The new Problem~\eqref{eq:problem3} uses two decision variables $x \in \X$ and $y\in \Y$. This is useful precisely because of the Lagrange multiplier result \eqref{eq:said-LM}. Our approach is as follows: First imagine that we know the Lagrange multipliers $\mu$ and $\lambda$. Suppose we seek to minimize the right-hand-side of \eqref{eq:said-LM} over all $x \in \X$ and $y \in \Y$. This separates into two subproblems: 
\begin{itemize} 
\item Chose $x \in \X$ to minimize the \emph{linear function} $\inner{\lambda}{x}$. This is done (in a possibly inexact way) by the oracle $\textsc{In-LMO}_{\X}$. 
\item Choose $y \in \Y$ to minimize the \emph{possibly nonsmooth convex function} 
$f(y) + \mu^{\top}h(y) - \inner{\lambda}{y}$.
This is done by using subgradients and projecting onto the set $\Y$ via the oracle $\textsc{PO}_{\Y}$. The set $\Y$ is chosen to be a set that contains $\X$. Further, $\Y$ is assumed to have a structure that is very simple so that projections onto $\Y$ are easy. 
For instance, if $\Y$ is a box, a norm-ball of fixed radius centered at the origin, or the entire space $\V$ itself, then the projections are straightforward.
Since we avoid complicated projections onto the feasible set $\X$, our algorithm is ``projection-free''.
\end{itemize} 

Of course, the Lagrange multipliers $\mu$ and $\lambda$ are unknown. Therefore, our algorithm must use approximations of these multipliers that are updated as time goes on. Further, even if $\mu$ and $\lambda$ were known, minimizing the right-hand-side of \eqref{eq:said-LM} may not have a desirable result. That is because the right-hand-side of \eqref{eq:said-LM} may have many minimizers, not all of them satisfying the desired constraints. Therefore, our update rule is carefully designed to ensure convergence to a vector that satisfies the desired constraints.

\section{The New Algorithm}
\begin{algorithm}[ht]
\caption{Nonsmooth Projection-Free Optimization with Functional Constraints (Nonsmooth PF-FC)
}
\label{alg:functional}
\begin{algorithmic}[1]
    \Require{Parameters: $T$, $\eta, \alpha, \beta, \delta$. 
    Initial point: $x_1\in \X$.}

    \State $y_1\gets x_1$ \label{ln:y1}
    \State $Q_{1}\gets\mathbf{0}$ \label{ln:Q1}
    \State Obtain stoch subgradients at $y_1$: $s_1, g_{1,1}, \ldots, g_{m,1}$
    \State \label{ln:W_1}$W_1 \gets \left[-h(y_{1})\right]_+$
    
    \For {$1\leq t\leq T-1$}
        \State $x_{t+1}\gets \textsc{In-LMO}_\X\{-Q_t;\delta\}$ \label{ln:x update}

        \State \label{ln:y_tild gradient}
        $p_t\gets
        \eta Q_t + s_t + \beta \sum_{i=1}^m(W_{i,t}+h_i(y_t))g_{i,t}$

        \State \label{ln:y_tild }
        $\tilde{y}_{t+1}\gets
        {\displaystyle 
        \frac{\left(\alpha+2G^2\beta\right)y_t+\eta x_{t+1}
        -p_t}{\alpha+2G^2\beta+\eta} }
        $

        \State \label{ln:y update}
        $y_{t+1}\gets \textsc{PO}_\Y\{\tilde{y}_{t+1}\}$

        \Comment{\small\texttt{Lines 7,8 and 9 are only separated for better readability.}}
        
        \State $Q_{t+1}\gets Q_{t}+y_{t+1}-x_{t+1}$ \label{ln:Q update functional}

        \State Obtain stoch subgradients at $y_{t+1}$: $s_{t+1}, g_{1,t+1}, \ldots, g_{m,t+1}$

        \For{$1\leq i\leq m$}
            \State $W_{i,t+1}\gets\max\left\{W_{i,t}+h_i(y_{t})+\inner{g_{i,t}} {y_{t+1}-y_{t}},\left[-h_i(y_{t+1})\right]_+\right\}$ 
            \label{ln:W_update}
            
        \EndFor
        
    \EndFor\\ 
    \Return \label{eq:xbar}$\Bar{x}_T = \frac{1}{T}\sum_{t=1}^{T}x_t$
\end{algorithmic}
\end{algorithm}
We call our algorithm Nonsmooth Projection-Free Optimization with Functional Constraints. This algorithm uses a parameter $T \in \{1, 2, 3, \ldots\}$ (which determines the number of iterations) and additional parameters $\eta>0$, $\alpha>0$, $\beta>0$. We focus on two specific parameter choices: 
\begin{itemize} 
\item Parameter Selection 1: Fix $\epsilon>0$ and define 
\begin{equation}\label{eq:selection1} \tag{ParSel.1}
\eta = \epsilon, \quad \alpha = \beta = 1/\epsilon, \quad T \geq 1/\epsilon^2
\end{equation}
\item Parameter Selection 2: Fix $T \in \{1, 2, 3, ...\}$ and define 
\begin{equation} \label{eq:selection2} \tag{ParSel.2}
\alpha = \frac{L\sqrt{T}}{D} , \quad \eta = \frac{L}{\sqrt{T\left(D^2+2\delta\right)}}, \quad \beta = \frac{\sqrt{T}}{GD}
\end{equation} 
\end{itemize} 
The first parameter selection is useful when the values $D,L,\delta$ associated with the problem structure are unknown. The second is fine tuned with knowledge of these values.

\begin{remark}\label{rmk:order-of-delta}
    While the parameter $\delta$ appears to be just one of the parameters within the algorithm's configuration, it fundamentally differs from the others. The following theorem provides the trade-off between the error in the algorithm's output and the number of iterations $T$. It further demonstrates that if $\delta$ is maintained at an order of $D^2$, the main algorithm’s error does not substantially increase. 
    This implies that there exists an optimizable trade-off in choosing $\delta$: by increasing $\delta$, we will need a larger $T$ to achieve the same final error $\epsilon$, but each iteration will be completed more quickly. 
    This is particularly important in practice, where the actual computational cost, rather than just the number of iterations $T$, is what matters. See Appendix~\ref{apx: In-lmo} for more details.
\end{remark}

\begin{theorem} [Objective gap] \label{thm:First theorem} 
Given Assumptions~\ref{assum:X_bounded}-\ref{assum:x0},  for Algorithm~\ref{alg:functional} with any $T\in \{1, 2, 3, ...\}$, $\eta>0$, $\alpha>0$, $\beta>0$, and $\delta\geq0$ the expected gap in the objective function is bounded as follows:
\begin{equation*}
    \E\left\{f(\Bar{x}_T)\right\}  -  f(x^*)
    \leq
    \frac{L^2}{2T\eta}+\eta \frac{D^2+2\delta }{2}
    +\frac{L^2}{2\alpha}
    +\frac{\alpha D^2}{2T} 
    +\frac{G^2 D^2\beta}{T} 
\end{equation*}
In particular, under Parameter Selection \eqref{eq:selection1} we have 
\begin{equation*}
    \E\left\{f(\Bar{x}_T)\right\} -f^*
    \leq \mathcal{O}(\epsilon) \quad \forall T \geq 1/\epsilon^2
    ,
\end{equation*}
    while under Parameter Selection \eqref{eq:selection2} we have
\begin{equation*}
    \E\left\{f(\Bar{x}_T)\right\} -f^*
    \leq
    \left(
    L\sqrt{D^2+2\delta}
    +L D
    +G D
    \right)
    \frac{1}{\sqrt{T}}
    .
\end{equation*}
\end{theorem}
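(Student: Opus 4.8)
The plan is to run a single-potential (Lyapunov) drift argument on a carefully chosen combination of the iterates. Define a Lyapunov function that packages together the running constraint-violation surrogate $Q_t$, the Lagrange-multiplier-like variables $W_{i,t}$, and the distance of $y_t$ to an optimal point $x^*$; concretely something like
\begin{equation*}
  \Phi_t = \frac{\eta}{2}\norm{Q_t}^2 + \frac{\beta}{2}\sum_{i=1}^m W_{i,t}^2 + \frac{\alpha + 2G^2\beta}{2}\norm{y_t - x^*}^2 .
\end{equation*}
The point of the specific $\tilde y_{t+1}$ update in Line~\ref{ln:y_tild } is that $y_{t+1}$ is (approximately, up to the projection onto $\Y \supseteq \X \ni x^*$) the minimizer over $\Y$ of a strongly convex quadratic model of $f(y)+\mu^\top h(y)-\inner{\lambda}{y}$ plus proximal terms; so I would first write down the first-order optimality / three-point inequality for that proximal step, using that $x^* \in \Y$ and that projection onto $\Y$ is nonexpansive. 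This yields a one-step inequality of the form $\Phi_{t+1} - \Phi_t \le \text{(linear terms in the subgradients paired with } y_{t+1}-x^* \text{ and } x_{t+1}-x^*) + \text{(quadratic error terms)}$.

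Next I would handle the three coupling pieces. For the $Q$-term, the update $Q_{t+1} = Q_t + y_{t+1} - x_{t+1}$ together with the \textsc{In-LMO} guarantee $\E\{\inner{x_{t+1}}{-Q_t}\} \le \inner{x^*}{-Q_t} + \delta$ (using $v = -Q_t/\norm{Q_t}$, hence the $\delta$ picks up at most a $\norm{Q_t}$ factor — this is where the $D^2 + 2\delta$ combination and the normalization in Assumption~\ref{assum:LMO} matter) lets me convert the $\inner{\eta Q_t}{y_{t+1}-x_{t+1}}$ term into something controlled by $\inner{\eta Q_t}{y_{t+1}-x^*} + \eta\delta + \tfrac{\eta}{2}\norm{y_{t+1}-x_{t+1}}^2$-type quantities, with the last absorbed against the $2G^2\beta$ cushion and the diameter bound $\norm{y_{t+1}-x_{t+1}}\le D$. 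For the $W$-term, I would use the $W_{i,t+1}$ update: from $W_{i,t+1} \ge W_{i,t} + h_i(y_t) + \inner{g_{i,t}}{y_{t+1}-y_t}$ one gets, after squaring, a bound on $W_{i,t+1}^2 - W_{i,t}^2$ that produces the term $2(W_{i,t}+h_i(y_t))\inner{g_{i,t}}{y_{t+1}-y_t}$ plus $\norm{g_{i,t}}^2\norm{y_{t+1}-y_t}^2 \le G_i^2\norm{y_{t+1}-y_t}^2$ (summing over $i$ gives the $2G^2\beta\cdot\tfrac12\norm{y_{t+1}-y_t}^2$ that is exactly the extra curvature built into the denominator of Line~\ref{ln:y_tild }). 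The convex subgradient inequality $h_i(y_t) + \inner{g_{i,t}}{y_{t+1}-y_t} \le h_i(y_{t+1})$ is not directly what I want here; instead I want to keep $(W_{i,t}+h_i(y_t))g_{i,t}$ because that is precisely the vector appearing in $p_t$, so the $W$-contribution cancels the corresponding piece of the $p_t$-generated linear term.

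Then I would collect everything. After the cancellations, what remains on the right is (i) the subgradient-linearization terms $\inner{s_t}{y_{t+1}-x^*}$ and $\beta\sum_i (W_{i,t}+h_i(y_t))\inner{g_{i,t}}{y_{t+1}-x^*}$, which I bound below by $f$ and $h_i$ values via convexity — after taking conditional expectations so that $\E\{s_t\mid y_t\}\in\partial f(y_t)$, etc. — and (ii) the $\inner{\eta Q_t}{y_{t+1}-x^*} - \inner{\eta Q_t}{x_{t+1}-x^*}$ combination which telescopes into the $Q$-potential. The Lagrange multiplier inequality \eqref{eq:said-LM} with the pair $(\mu,\lambda)$ from Lemma~\ref{lem:Lagrange} is what lets me replace $f(y_{t+1}) + \mu^\top h(y_{t+1}) + \inner{\lambda}{x_{t+1}-y_{t+1}} \ge f^*$, turning the accumulated linear terms into a clean lower bound $f^* - f(\Bar x_T)$-type quantity after summing $t=1,\ldots,T-1$, dividing by $T$, using Jensen on $f(\Bar x_T) \le \tfrac1T\sum f(x_t)$, and telescoping $\Phi_t$. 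The residual quadratic/error terms — $\tfrac{L^2}{2\eta}$ per step from $\norm{s_t}^2$ (this is why $s$ only needs a bounded second moment $\E\norm{s_t}^2\le L^2$), $\tfrac{\alpha D^2}{2}$ and $\tfrac{G^2D^2\beta}{}$-type terms from bounding $\norm{y_{t+1}-x^*}$ and $\norm{x_{t+1}-x^*}$ by $D$, and the $\eta(D^2+2\delta)/2$ from the LMO inexactness — assemble exactly into the claimed bound. The two parameter selections then follow by direct substitution: \eqref{eq:selection1} makes every term $\mathcal{O}(\epsilon)$ once $T\ge 1/\epsilon^2$, and \eqref{eq:selection2} is the choice balancing the $1/\sqrt T$ terms, giving the stated closed form. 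The main obstacle I anticipate is bookkeeping the cancellation between the $p_t$-induced linear terms and the $Q$- and $W$-potential increments so that the leftover curvature is exactly covered by the $\alpha + 2G^2\beta + \eta$ denominator — i.e., verifying that the algorithm's constants were chosen precisely to make the drift inequality close — together with the care needed around whether $x^*$ lies in the interior of $\Y$ (handled, as in Lemma~\ref{lem:Lagrange}, via the McShane–Whitney extension so that $\norm{\lambda}\le L+G\norm{\mu}_2$ and the subgradient bounds hold on all of $\V$).
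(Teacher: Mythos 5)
Your overall skeleton --- the three-part potential in $Q_t$, $W_t$, and $\|y_t-x^*\|^2$, the pushback/three-point inequality for the proximal step of Lines~\ref{ln:y_tild gradient}--\ref{ln:y update}, the squared-update drift bound for $W_{i,t}$ that absorbs the extra $2G^2\beta$ curvature, and the telescoping of $\|Q_t\|^2$ against the \textsc{In-LMO} error --- is essentially the paper's argument (Lemmas~\ref{lem:results from Alg}, \ref{lem:y-min}, \ref{lem:pushback} and Eqs.~\eqref{eq:lem2_Q_squared}, \eqref{eq:drift}). However, there is a genuine gap in how you close it. You invoke the saddle inequality \eqref{eq:said-LM} with the pair $(\mu,\lambda)$ from Lemma~\ref{lem:Lagrange} to convert the accumulated linear terms into $f^*$. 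That route necessarily leaves $\|\mu\|_2$ and $\|\lambda\|$ (hence $L+G\|\mu\|_2$) in the final estimate --- it is precisely the route the paper takes for the \emph{constraint-violation} bound of Theorem~\ref{thm:Constraints violation}, whose constants do depend on $\mu$ --- whereas the objective-gap bound you are asked to prove contains no $\mu$ or $\lambda$ at all. For Theorem~\ref{thm:First theorem} the constraint machinery must be discharged without Lagrange multipliers: the term $\beta\,\E\{(W_t+h(y_t))^\top l_t(x^*)\}$ is nonpositive simply because $h_i(x^*)\le 0$ and $W_{i,t}+h_i(y_t)\ge 0$ (Eq.~\eqref{eq:l(x*)<0}), and the telescoped $W$-potential is dropped entirely using $\|W_T\|_2\ge\|[-h(y_T)]_+\|_2$ and $\|W_1\|_2\le\|h(y_1)\|_2$.

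A second, related slip: the drift inequality controls $\sum_t\E\{f(y_t)\}$, not $\sum_t\E\{f(x_t)\}$, so Jensen must be applied to $\bar y_T$, not to $\bar x_T$ as you write. The bridge from $f(\bar y_T)$ to $f(\bar x_T)$ is not the $\inner{\lambda}{x_t-y_t}$ term but the identity $Q_T=T(\bar y_T-\bar x_T)$ from \eqref{eq:Q_T=xbar-ybar} combined with Lipschitz continuity of $f$ and a completion of squares, $TL\,\E\{\|\bar y_T-\bar x_T\|\}-\frac{\eta}{2}T^2\E\{\|\bar y_T-\bar x_T\|^2\}\le\frac{L^2}{2\eta}$; this is exactly where the leading $\frac{L^2}{2T\eta}$ term comes from, and that slack is unavailable if the $-\frac{\eta}{2}\E\{\|Q_T\|^2\}$ term has already been spent elsewhere. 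Finally, a minor point: the paper uses the inexact-LMO guarantee as an additive error $\E\{\inner{Q_t}{x^*-x_{t+1}}\}\le\delta$ (Eq.~\eqref{eq:line-6}); your reading that the error picks up a factor of $\|Q_t\|$ would inject a per-step term growing like $\E\{\|Q_t\|\}=\Omega(\sqrt t)$ and would not assemble into the stated $\eta(D^2+2\delta)/2$ contribution.
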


\begin{theorem} [Constraint violation] \label{thm:Constraints violation}
Given Assumptions~\ref{assum:X_bounded}-\ref{assum:x0}, Algorithm~\ref{alg:functional} under Parameter Selection \eqref{eq:selection1} yields 
    \begin{equation*}
    \E\left\{\|\left[h(\Bar{x}_T)\right]_+\|_2\right\}
    \leq \mathcal{O}(\epsilon) \quad \forall T \geq 1/\epsilon^2
    ,
    \end{equation*}
    while under Parameter Selection \eqref{eq:selection2} we have 
    \begin{equation*}
        \E\left\{\|\left[h(\Bar{x}_T)\right]_+\|_2\right\}
        \leq
        \frac{1}{\sqrt{T}} \sqrt{A_0+A_1\|\mu\|_2+A_2 \|\mu\|_2^2}
        .
    \end{equation*}
    Here, $A_1$, $A_2$, and $A_3$ are constants depending on the problem's constants (they are defined in the last part of the theorem's proof). The variable $\mu$ represents the Lagrange multiplier from \eqref{eq:mu-LM}.
\end{theorem}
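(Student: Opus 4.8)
The plan is to split the proof into two parts: (i) a deterministic reduction of the constraint violation at $\Bar{x}_T$ to the terminal values of the two virtual queues $Q_T$ and $W_{1,T},\dots,W_{m,T}$; and (ii) a drift estimate bounding those terminal values in expectation. Combining them gives the claim. The drift estimate reuses the drift-plus-penalty computation behind Theorem~\ref{thm:First theorem}, extracting its complementary consequence.

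\textbf{Step 1 (the queues control the violation).} Set $\Bar{y}_T:=\frac1T\sum_{t=1}^{T}y_t$. Since $y_1=x_1$, unrolling line~\ref{ln:Q update functional} gives $Q_T=\sum_{t=1}^{T}(y_t-x_t)$, hence $\Bar{x}_T=\Bar{y}_T-\frac1T Q_T$. Jensen's inequality applied to each convex $h_i$ followed by Lemma~\ref{lem:h_lip} gives
\[
h_i(\Bar{x}_T)\ \le\ h_i(\Bar{y}_T)+\frac{G_i}{T}\norm{Q_T}\ \le\ \frac1T\sum_{t=1}^{T}h_i(y_t)+\frac{G_i}{T}\norm{Q_T}.
\]
Keeping only the first argument of the maximum in line~\ref{ln:W_update} and telescoping gives $W_{i,T}\ge\sum_{t=1}^{T-1}\bigl(h_i(y_t)+\inner{g_{i,t}}{y_{t+1}-y_t}\bigr)$; then $|\inner{g_{i,t}}{y_{t+1}-y_t}|\le G_i\norm{y_{t+1}-y_t}$ together with a Lipschitz bound on the missing term $h_i(y_T)$ yields $\sum_{t=1}^{T}h_i(y_t)\le W_{i,T}+\mathcal{O}(G_i R_T)+c_i$, where $R_T:=\sum_{t=1}^{T-1}\norm{y_{t+1}-y_t}$ and $c_i:=|h_i(y_1)|$. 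Taking positive parts, summing the squares over $i$, and using $\sqrt{\sum_i G_i^2}\le G$ produces
\[
\norm{[h(\Bar{x}_T)]_+}_2\ \le\ \frac1T\Bigl(\sum_{i=1}^{m}W_{i,T}^2\Bigr)^{1/2}+\frac{G}{T}\norm{Q_T}+\frac{\mathcal{O}(G R_T)+C}{T},
\]
with $C:=\bigl(\sum_i c_i^2\bigr)^{1/2}$; finally $R_T\le\sqrt{T}\,\bigl(\sum_{t=1}^{T-1}\norm{y_{t+1}-y_t}^2\bigr)^{1/2}$ by Cauchy--Schwarz, and $\sum_t\norm{y_{t+1}-y_t}^2$ is one of the nonnegative quantities sitting on the favorable side of the Theorem~\ref{thm:First theorem} drift inequality.

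\textbf{Step 2 (drift bound on the terminal queues).} In the drift-plus-penalty inequality that proves Theorem~\ref{thm:First theorem}, instead of discarding the terminal Lyapunov value to bound $\sum_t(f(y_t)-f^*)$ from above, I keep it and bound $\sum_t(f(y_t)-f^*)$ from below. By Lemma~\ref{lem:Lagrange} the Lagrange multipliers $(\mu,\lambda)$ of Problem~\eqref{eq:problem3} satisfy $f(y_t)-f^*\ge-\mu^\top h(y_t)-\inner{\lambda}{x_t-y_t}$ for all $t$ (take $x=x_t\in\X$, $y=y_t\in\Y$), with $\norm{\lambda}\le L+G\norm{\mu}_2$. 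Summing over $t$ and reusing the telescoping of Step~1,
\[
\sum_{t=1}^{T}\bigl(f(y_t)-f^*\bigr)\ \ge\ -\sum_{i=1}^{m}\mu_i W_{i,T}+\inner{\lambda}{Q_T}-(\text{l.o.t.})\ \ge\ -\norm{\mu}_2\Bigl(\sum_i W_{i,T}^2\Bigr)^{1/2}-\norm{\lambda}\,\norm{Q_T}-(\text{l.o.t.}).
\]
Substituting into the drift inequality produces a quadratic inequality of the shape
\[
\frac{\eta}{2}\norm{Q_T}^2+\frac{\beta}{2}\sum_{i=1}^{m}W_{i,T}^2\ \le\ B_0+\norm{\lambda}\,\norm{Q_T}+\norm{\mu}_2\Bigl(\sum_i W_{i,T}^2\Bigr)^{1/2}+(\text{l.o.t.}),
\]
where $B_0$ is explicit in $T,D,L,G,\delta$ (it is $T$ times the right-hand side of the Theorem~\ref{thm:First theorem} bound, up to constants). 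Taking expectations — every stochastic-subgradient cross term averaging out via the conditioning structure of Assumption~\ref{assum:SFO}, exactly as in the proof of Theorem~\ref{thm:First theorem} — and completing the square in each variable gives $\E\norm{Q_T}$, $\E\bigl(\sum_i W_{i,T}^2\bigr)^{1/2}$, and $\E R_T$ all $\le\sqrt{T}\cdot\mathcal{O}(1+\norm{\mu}_2)$; the bound $\norm{\lambda}\le L+G\norm{\mu}_2$ is what turns the $\norm{\lambda}$-terms into a degree-two polynomial in $\norm{\mu}_2$ under the eventual square root.

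\textbf{Step 3 (combine) and the obstacle.} Inserting the Step~2 expectation bounds into the Step~1 estimate and using $\E\sqrt{X}\le\sqrt{\E X}$, the $1/T$ prefactor against $\sqrt{T}$-scale queue bounds gives $\E\norm{[h(\Bar{x}_T)]_+}_2\le\frac{1}{\sqrt{T}}\sqrt{A_0+A_1\norm{\mu}_2+A_2\norm{\mu}_2^2}$ under Parameter Selection~\eqref{eq:selection2}, with $A_0,A_1,A_2$ read off as the degree-$0,1,2$ contributions in $\norm{\mu}_2$; under Parameter Selection~\eqref{eq:selection1} ($\eta=\epsilon$, $\alpha=\beta=1/\epsilon$, $T\ge1/\epsilon^2$) each term of the same estimate is $\mathcal{O}(\epsilon)$, giving $\E\norm{[h(\Bar{x}_T)]_+}_2\le\mathcal{O}(\epsilon)$. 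The delicate part is Step~1: turning the virtual-queue recursions into a clean upper bound on $\norm{[h(\Bar{x}_T)]_+}_2$ with no uncontrolled residue. Two features force care. First, the consensus gap $h_i(x_t)-h_i(y_t)$ is controllable only in aggregate, through $\norm{Q_T}=\norm{\sum_t(y_t-x_t)}$, so the reduction must run through the averaged iterates rather than term by term. Second, telescoping the $W$-update leaves the linearization residue $\sum_t\inner{g_{i,t}}{y_{t+1}-y_t}$, which must be absorbed into $\sqrt{T}$-order quantities (namely $\sum_t\norm{y_{t+1}-y_t}^2$), whose control itself comes only from the drift analysis; thus Steps~1 and~2 are genuinely entangled, and the conditional-expectation bookkeeping — with $y_{t+1}$ depending on $g_{i,t}$ — must be tracked consistently throughout, just as in the proof of Theorem~\ref{thm:First theorem}.
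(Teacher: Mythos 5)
Your proposal follows essentially the same route as the paper's proof: your Step 1 is the paper's Lemma~\ref{lem:[h(Bar x)]+}, your Step 2 is the drift analysis culminating in \eqref{eq:saved_sec_2} with the completed squares of \eqref{eq:3bounds} (including the use of Lemma~\ref{lem:Lagrange} to convert the objective sum into queue terms and $\|\lambda\|\leq L+G\|\mu\|_2$ to obtain the quadratic dependence on $\|\mu\|_2$), and your Step 3 is the final combination in \eqref{eq:coiunstr_NOT_simpl}. The one caveat is that a term you relegate to ``(l.o.t.)'', namely the terminal $\tfrac{\beta}{2}\|[-h(y_T)]_+\|_2^2$ left over from telescoping the $W$-drift \eqref{eq:drift}, is not lower order a priori --- it is controlled only via $\|h(y_T)\|_2^2\leq 2\|h(x^*)\|_2^2+2G^2\|x^*-y_T\|^2$ with $y_T\in\Y$ possibly unbounded --- and the paper must cancel the resulting $G^2\beta\|x^*-y_T\|^2$ against the retained strong-convexity term $-G^2\beta\,\E\left\{\|x^*-y_{T}\|^2\right\}$ from the pushback step (see \eqref{eq:RHS1_sec} and \eqref{eq:temp4}), a cancellation your sketch would need to make explicit.
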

The proof of the first theorem is provided in this section. The proof of the second theorem is in Appendix~\ref{apx: proof of theorem 2}. 

\begin{remark}
    When the number of iterations $T$ is on the order of $\mathcal{O}(\epsilon^{-2})$, the expected suboptimality $\E\left\{f(\Bar{x}_T)\right\} -f^*$ is bounded by $\mathcal{O}(\epsilon)$. This approach achieves an optimal solution in terms of the computational cost, measured by the number of calls to both the \textsc{In-LMO} and the (possibly stochastic) first-order oracle \cite{lan2014complexity,alma991043822486603731,bubeck2015convex}.
\end{remark}

\subsection{Lagrange Multiplier Update Analysis}
Line~\ref{ln:Q update functional} of Algorithm~\ref{alg:functional} specifies that $Q_{t+1}=Q_{t}+y_{t+1}-x_{t+1}$. If we apply the $\|\cdot\|^2$ norm to both sides of this equation for all $t \in \{1, \ldots, T-1\}$, we obtain:
\begin{equation}\label{eq:lem2_Q_squared}
    \inner{Q_t}{y_{t+1}-x_{t+1}}+\frac{1}{2}\|y_{t+1}-x_{t+1}\|^2 
    =\frac{1}{2}\|Q_{t+1}\|^2 -\frac{1}{2}\|Q_{t}\|^2 
    .
\end{equation}
Furthermore, summing $Q_{t+1}=Q_{t}+y_{t+1}-x_{t+1}$ over $t$ in the range $t\in\{1,\ldots,T\}$ and using Line~\ref{ln:Q1} which states $Q_1=0$, gives:
\begin{equation}\label{eq:Q_T=xbar-ybar}
    Q_T = \sum_{t=1}^T (y_t-x_t)
    =T(\Bar{y}_T-\Bar{x}_T)
    .
\end{equation}
Here, similar to $\Bar{x}_T$, we define $\Bar{y}_T\coloneqq\frac{1}{T} \sum_{t=1}^T y_t$.

For simplicity, define $l_{i,t}(x)$ as the linearization of $h_i$ at the point $y_t$ obtained from the algorithm. 
This linearization uses the stochastic subgradient $g_{i,t}$:
\begin{equation} \label{eq:linear-func}
    l_{i,t}(x) \coloneqq h_i(y_t)+\inner{g_{i,t}}{x-y_t}.
\end{equation}
Define $l_{t}(x)$ as a vector, where each element at index $i$ corresponds to $l_{i,t}(x)$.

\begin{lemma}
\label{lem:results from Alg} 
Under Algorithm~\ref{alg:functional} with any $T \in \{1, 2, 3, \ldots\}$, $\eta>0,\alpha>0,\beta>0$, we have for any $x^*\in\X^*$, $i \in \{1, \ldots,m\}$, and $t \in \{1, \ldots, T\}$
\begin{align} 
    &W_{i,t}\geq 0 \label{eq:lem2 w>0}\\*
    &W_{i,t}+h_i(y_t)\geq 0 \label{eq:lem2 w+h>0}\\*
    &\E\left\{ \innerTp{W_t+h(y_t)} {l_t(x^*)}\right\}\leq 0\label{eq:l(x*)<0}
\end{align} 
Further, for all $t \in \{1, \ldots, T-1\}$ we have 
\begin{multline}
\label{eq:drift}
    \innerTp{W_t+h(y_t)} {l_t(y_{t+1})} 
    +\frac{G^2}{2}\left\|y_{t+1}-y_t\right\|^2
    \\*\geq
    \frac{\|W_{t+1}\|_2^2}{2}-\frac{\|W_t\|_2^2}{2}
    -\frac{\|\left[-h(y_{t+1})\right]_+\|_2^2}{2}
    +\frac{\left\|h(y_{t})\right\|_2^2}{2}   
    .
\end{multline}
\end{lemma}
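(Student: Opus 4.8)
The plan is to treat the four claims separately, since they concern different aspects of the $W$-update on Line~\ref{ln:W_update}. The first two inequalities \eqref{eq:lem2 w>0} and \eqref{eq:lem2 w+h>0} should follow by a joint induction on $t$. For the base case $t=1$, Line~\ref{ln:W_1} sets $W_1 = [-h(y_1)]_+ \geq 0$ entrywise, which also gives $W_{i,1} + h_i(y_1) \geq [-h_i(y_1)]_+ + h_i(y_1) \geq 0$ (considering the two cases $h_i(y_1) \geq 0$ and $h_i(y_1) < 0$). For the inductive step, observe that Line~\ref{ln:W_update} writes $W_{i,t+1}$ as a max of two quantities, one of which is $[-h_i(y_{t+1})]_+ \geq 0$; hence $W_{i,t+1} \geq 0$ immediately, and then $W_{i,t+1} + h_i(y_{t+1}) \geq [-h_i(y_{t+1})]_+ + h_i(y_{t+1}) \geq 0$ by the same two-case argument. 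This part is essentially bookkeeping.

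For \eqref{eq:l(x*)<0}, I would expand $l_{i,t}(x^*) = h_i(y_t) + \inner{g_{i,t}}{x^* - y_t}$ and take conditional expectation given $y_t$. Since $g_{i,t}$ is a stochastic subgradient with $\E\{g_{i,t} \mid y_t\} \in \partial h_i(y_t)$, and since $W_{i,t}$ and $h_i(y_t)$ are measurable given the history up to $y_t$, convexity of $h_i$ gives $\E\{l_{i,t}(x^*) \mid y_t\} \leq h_i(x^*) \leq 0$ (using $x^* \in \X^* \subseteq \X$ and feasibility $h_i(x^*) \leq 0$). Multiplying by $W_{i,t} + h_i(y_t) \geq 0$ from \eqref{eq:lem2 w+h>0}, summing over $i$, and applying iterated expectation yields \eqref{eq:l(x*)<0}. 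The only subtlety is making sure the independence/measurability structure of Assumption~\ref{assum:SFO} is invoked correctly, so that $W_{i,t} + h_i(y_t)$ can be pulled out of the conditional expectation.

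The main work is the drift inequality \eqref{eq:drift}, and I expect this to be the principal obstacle. The strategy is to analyze the update $W_{i,t+1} = \max\{W_{i,t} + l_{i,t}(y_{t+1}),\, [-h_i(y_{t+1})]_+\}$. For each $i$ I would square both sides: in the case where the first term attains the max, $W_{i,t+1}^2 = (W_{i,t} + l_{i,t}(y_{t+1}))^2 = W_{i,t}^2 + 2W_{i,t}\, l_{i,t}(y_{t+1}) + l_{i,t}(y_{t+1})^2$; in the case where $[-h_i(y_{t+1})]_+$ attains the max, $W_{i,t+1}^2 = [-h_i(y_{t+1})]_+^2$. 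A clean way to unify the two cases is to write, for any max of two reals $a,b$, the bound $\max\{a,b\}^2 \leq a^2 + b^2$ when... actually the cleaner route is: since $W_{i,t+1} \geq W_{i,t} + l_{i,t}(y_{t+1})$ and $W_{i,t+1} \geq [-h_i(y_{t+1})]_+$, and $W_{i,t+1}$ equals one of them, one shows $W_{i,t+1}^2 - [-h_i(y_{t+1})]_+^2 \leq (W_{i,t} + l_{i,t}(y_{t+1}))^2 - \big(\text{something}\big)$. Concretely, if the first term wins, subtract $[-h_i(y_{t+1})]_+^2 \geq 0$; if the second wins, $W_{i,t+1}^2 = [-h_i(y_{t+1})]_+^2$ so the left side is $0 \leq (W_{i,t} + l_{i,t}(y_{t+1}))^2$ trivially — but we need it against $h_i(y_t)^2$, not $0$. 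So I would instead track $h_i(y_t)^2$ by noting $W_{i,t} + l_{i,t}(y_{t+1}) = (W_{i,t} + h_i(y_t)) + \inner{g_{i,t}}{y_{t+1}-y_t}$, expanding the square, and using $|\inner{g_{i,t}}{y_{t+1}-y_t}| \leq \|g_{i,t}\|\,\|y_{t+1}-y_t\| \leq G_i\|y_{t+1}-y_t\|$ together with $2W_{i,t}\inner{g_{i,t}}{y_{t+1}-y_t} \leq \ldots$; after summing over $i$, the terms $\sum_i \|g_{i,t}\|^2 \leq G^2$ produce the $\frac{G^2}{2}\|y_{t+1}-y_t\|^2$ term, the cross term $\sum_i (W_{i,t}+h_i(y_t))\inner{g_{i,t}}{y_{t+1}-y_t}$ combines with $\sum_i (W_{i,t}+h_i(y_t)) h_i(y_t)$ wait — more carefully, the target right-hand side has $+\frac{\|h(y_t)\|_2^2}{2}$, which must come from completing a square in $W_{i,t} + h_i(y_t)$ versus $W_{i,t}$. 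The identity $\tfrac12(W_{i,t}+h_i(y_t))^2 - \tfrac12 W_{i,t}^2 = W_{i,t} h_i(y_t) + \tfrac12 h_i(y_t)^2$ is the key algebraic bridge, and the bulk of the proof is arranging the squared-update expansion so that exactly this combination appears. I would do the per-coordinate algebra, handle the two cases of the max via the observation that in both cases $\tfrac12 W_{i,t+1}^2 - \tfrac12[-h_i(y_{t+1})]_+^2 \leq \tfrac12(W_{i,t}+l_{i,t}(y_{t+1}))^2 - \tfrac12 h_i(y_t)^2 + (W_{i,t}+h_i(y_t))\cdot(\text{linear in } g_{i,t}) + \tfrac{G_i^2}{2}\|y_{t+1}-y_t\|^2$ — verifying this inequality in each case is the delicate step — and finally sum over $i \in \{1,\ldots,m\}$ to obtain \eqref{eq:drift}. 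I expect the case where $[-h_i(y_{t+1})]_+$ attains the maximum to require the most care, since there one must still recover the $-\tfrac12 h_i(y_t)^2$ term on the correct side using only $W_{i,t}+h_i(y_t) \geq 0$ and the bound $[-h_i(y_{t+1})]_+ \leq W_{i,t} + l_{i,t}(y_{t+1})$ failing — instead one uses $[-h_i(y_{t+1})]_+^2 \leq (W_{i,t}+l_{i,t}(y_{t+1}))^2$ is false in general, so the genuinely correct accounting is that $W_{i,t+1}^2 \leq \max\{(W_{i,t}+l_{i,t}(y_{t+1}))^2,\ [-h_i(y_{t+1})]_+^2\} \leq (W_{i,t}+l_{i,t}(y_{t+1}))^2 + [-h_i(y_{t+1})]_+^2 - \min\{\ldots\}$; pinning down the exact inequality that makes all four target terms line up is where I would spend the most effort.
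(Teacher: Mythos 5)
Your treatment of \eqref{eq:lem2 w>0}, \eqref{eq:lem2 w+h>0}, and \eqref{eq:l(x*)<0} is correct and matches the paper's proof (indeed no induction is needed for the first two: Line~\ref{ln:W_1} and Line~\ref{ln:W_update} each make $W_{i,t}\geq[-h_i(y_t)]_+$ directly, which gives both claims at once). The measurability point you flag for \eqref{eq:l(x*)<0} is exactly the one the paper invokes: conditioning on $y_t$, the factor $W_{i,t}+h_i(y_t)$ is independent of $g_{i,t}$, so it pulls out and convexity plus $h_i(x^*)\leq 0$ finishes it.

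The one genuine gap is the drift inequality \eqref{eq:drift}, where you circle the right idea several times but explicitly leave the crux unresolved (``pinning down the exact inequality \dots is where I would spend the most effort''). The case analysis you keep returning to is unnecessary, and your worry that ``$[-h_i(y_{t+1})]_+^2\leq(W_{i,t}+l_{i,t}(y_{t+1}))^2$ is false in general'' is a red herring: the paper simply uses $(\max\{a,b\})^2\leq a^2+b^2$, which holds for \emph{all} reals $a,b$ because $(\max\{a,b\})^2\leq\max\{a^2,b^2\}\leq a^2+b^2$ (the max is attained by one of $a,b$, and both squares are nonnegative). Applied to Line~\ref{ln:W_update} this gives, with no casework,
\begin{equation*}
W_{i,t+1}^2\;\leq\;\bigl(W_{i,t}+l_{i,t}(y_{t+1})\bigr)^2+\bigl[-h_i(y_{t+1})\bigr]_+^2 .
\end{equation*}
The remaining step is not an inequality to be hunted for but an exact identity: writing $l=h+g$ with $h=h_i(y_t)$ and $g=\inner{g_{i,t}}{y_{t+1}-y_t}$, one has
\begin{equation*}
W_{i,t}\,l+\tfrac12 l^2=(W_{i,t}+h)\,l-\tfrac12 h^2+\tfrac12 g^2 ,
\end{equation*}
which is the algebraic bridge you were looking for (yours, $\tfrac12(W+h)^2-\tfrac12W^2=Wh+\tfrac12h^2$, is a cousin but not the one that makes the terms line up). Substituting, bounding $g^2\leq\|g_{i,t}\|^2\|y_{t+1}-y_t\|^2$ by Cauchy--Schwarz, summing over $i$ with $\sum_i\|g_{i,t}\|^2\leq G^2$, and rearranging yields \eqref{eq:drift} exactly. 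So your plan is salvageable and lands on the paper's argument, but as written it does not yet constitute a proof of the fourth claim.
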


\begin{proof}
    Lines \ref{ln:W_1} at $t=1$ and \ref{ln:W_update} at $t\geq2$ establish that $W_{i,t}\geq\max\left\{0,-h_i(y_{t})\right\}$, thereby confirming the validity of Equations~\eqref{eq:lem2 w>0} and \eqref{eq:lem2 w+h>0}.

    Using the definition of the function $l_t$ in Equation~\eqref{eq:linear-func} we have:
    \begin{equation*}
        \innerTp{W_t+h(y_t)} {l_t(x^*)}
        =
        \sum_{i=1}^m \left(W_{i,t}+h_i(y_t)\right) \left(h_i(y_t)+\inner{g_{i,t}}{x^*-y_t}\right)
        .
    \end{equation*}
    Using the iterated expectation gives:
    \begin{align*}
        &\E\left\{\sum_{i=1}^m \left(W_{i,t}+h_i(y_t)\right) \left(h_i(y_t)+\inner{g_{i,t}}{x^*-y_t}\right)\right\}
        \\*=&
        \E\left\{\sum_{i=1}^m \E\left\{W_{i,t}+h_i(y_t)\middle| y_t\right\}\left(h_i(y_t)+\inner{\E\left\{g_{i,t}\middle| y_t\right\}}{x^*-y_t}\right)\right\}
        .
    \end{align*}
    Here, we used the independence of $W_{i,t}$ and $g_{i,t}$ when conditioning on $y_t$.
    Assumption~\ref{assum:SFO} implies $\E\left\{g_{i,t}\middle| y_t\right\} \in \partial h_i(y_t)$. Using Equation~\eqref{eq:lem2 w+h>0} and convexity of the function $h_i$ we get:
    \begin{align*}
        &\E\left\{W_{i,t}+h_i(y_t)\middle| y_t\right\} \left(h_i(y_t)+\inner{\E\left\{g_{i,t}\middle| y_t\right\}}{x^*-y_t}\right)
        \\*\leq&
        \E\left\{W_{i,t}+h_i(y_t)\middle| y_t\right\}  h_i(x^*)
    \end{align*}
    Using the inequality $h_i(x^*) \leq 0$ proves Equation~\eqref{eq:l(x*)<0}.


    Applying the inequality $\left(\max\{a,b\}\right)^2\leq a^2+b^2$ to Line~\ref{ln:W_update} gives
    \begin{align*}
        \frac{W_{i,t+1}^2}{2}
        \leq&
        \frac{W_{i,t}^2}{2}
        +W_{i,t}
        \left(h_i(y_{t}) + \inner{g_{i,t}} {y_{t+1}-y_{t}} \right)
        \\*&+\frac{1}{2}\left(h_i(y_{t}) + \inner{g_{i,t}} {y_{t+1}-y_{t}} \right)^2
        +\frac{\left[-h_i(y_{t+1})\right]^2_+}{2}
        \\=&
        \frac{W_{i,t}^2}{2}
        +W_{i,t}
        \left(h_i(y_{t}) + \inner{g_{i,t}} {y_{t+1}-y_{t}} \right)
        \\*&+h_i(y_{t})
        \left(h_i(y_{t}) + \inner{g_{i,t}} {y_{t+1}-y_{t}} \right)
        \\*&- \frac{\left(h_i(y_{t})\right)^2}{2} 
        +\frac{1}{2}\left( \inner{g_{i,t}} {y_{t+1}-y_{t}} \right)^2
        +\frac{\left[-h_i(y_{t+1})\right]^2_+}{2}
        \\\leq& 
        \frac{W_{i,t}^2}{2}
        +\left(W_{i,t}+h_i(y_{t})\right)
        \left(h_i(y_{t}) + \inner{g_{i,t}} {y_{t+1}-y_{t}} \right)
        \\*&+\frac{\|g_{i,t}\|^2}{2}\|y_{t+1}-y_{t}\|^2
        +\frac{\left[-h_i(y_{t+1})\right]^2_+}{2}
        -\frac{\left(h_i(y_{t})\right)^2}{2}
        .
    \end{align*} 
    Here, the final step utilizes Cauchy-Schwarz inequality. By summing over the range $i\in\{1,2,\ldots,m\}$ and leveraging Assumption~\ref{assum:SFO}, which implies $\sum_{i=1}^m\|g_{i,t}\|^2\leq G^2$, and using the linearized notation \eqref{eq:linear-func}, we get the proof of Equation~\eqref{eq:drift}.

\end{proof}

\subsection{Algorithm Analysis}

By definition of $s_t$ as a stochastic subgradient of $f$ at $y_t$ we have
\begin{equation*}
    \E\left\{\inner{s_t}{x^*-y_t} \middle| y_t \right\} \leq f(x^*)-f(y_t).
\end{equation*}
By iterated expectations we have 
\begin{equation} \label{eq:iterated-gradient} 
\E\left\{\inner{s_t}{x^*-y_t} \right\} \leq f(x^*)-\E\left\{f(y_t)\right\} .
\end{equation}

Line~\ref{ln:x update} of Algorithm~\ref{alg:functional} gives $x_{t+1}\gets\textsc{In-LMO}_{\X}\{-Q_t;\delta\}$. Since we have $x^*\in \X$, Assumption~\ref{assum:LMO} ensures that $x_{t+1}$ satisfies
\begin{equation*}
    \E\left\{\inner{x_{t+1}}{-Q_t} \middle| Q_t\right\} \leq \inner{x^*}{-Q_t} + \delta
    .
\end{equation*}
Taking expectations of both sides and rearranging gives
\begin{equation}\label{eq:line-6}
\E\left\{\inner{Q_t}{x^*-x_{t+1}} \right\} \leq \delta
.
\end{equation} 

\begin{lemma}\label{lem:y-min}
Lines \ref{ln:y_tild gradient}, \ref{ln:y_tild }, and \ref{ln:y update} of Algorithm~\ref{alg:functional} are equivalent to:
\begin{multline} \label{eq:y-min}
    y_{t+1}=
    \arg\min_{y\in\Y } \Bigg\{
    \eta \inner{Q_t} {y-x_{t+1}} 
    +\inner{s_t} {y-y_t} 
    +\beta \innerTp{W_t+h(y_t)} {l_t(y)}
    \\*
    +\frac{\eta}{2}\|y-x_{t+1}\|^2
    +\frac{\alpha+2 G^2\beta}{2}\|y-y_t\|^2
    \Bigg\}
    .
\end{multline}
\end{lemma}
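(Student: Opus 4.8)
The plan is to show that the three lines of the algorithm -- Line~\ref{ln:y_tild gradient} (defining $p_t$), Line~\ref{ln:y_tild } (defining $\tilde y_{t+1}$), and Line~\ref{ln:y update} (projecting onto $\Y$) -- together implement exactly one step of proximal/projected-gradient type, and that this step is the optimality condition for the strongly convex quadratic minimization problem \eqref{eq:y-min}. First I would observe that the objective inside the $\arg\min$ in \eqref{eq:y-min}, call it $\Phi_t(y)$, is a strongly convex function of $y$ (the quadratic terms $\tfrac{\eta}{2}\|y-x_{t+1}\|^2 + \tfrac{\alpha+2G^2\beta}{2}\|y-y_t\|^2$ have positive coefficients, and the remaining terms are affine in $y$ since $l_{i,t}(y) = h_i(y_t) + \inner{g_{i,t}}{y-y_t}$ is affine in $y$). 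Hence $\Phi_t$ has a unique minimizer over the closed convex set $\Y$, and it suffices to identify that minimizer.

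The key computation is to expand $\Phi_t(y)$ and complete the square. Collecting the linear-in-$y$ part: the gradient of the affine terms is $\eta Q_t + s_t + \beta \sum_{i=1}^m (W_{i,t}+h_i(y_t)) g_{i,t}$, which is precisely $p_t$ as defined in Line~\ref{ln:y_tild gradient}. The gradient of the quadratic terms is $\eta(y-x_{t+1}) + (\alpha+2G^2\beta)(y-y_t)$. Setting the total gradient of the unconstrained problem to zero gives $(\alpha + 2G^2\beta + \eta)\, y = (\alpha+2G^2\beta) y_t + \eta x_{t+1} - p_t$, i.e. the unconstrained minimizer of $\Phi_t$ over all of $\V$ is exactly $\tilde y_{t+1}$ as in Line~\ref{ln:y_tild }. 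Then I would note that $\Phi_t(y)$, being a quadratic with Hessian $(\alpha+2G^2\beta+\eta)I$, can be rewritten (up to an additive constant independent of $y$) as $\tfrac{\alpha+2G^2\beta+\eta}{2}\|y - \tilde y_{t+1}\|^2$. Minimizing this over $y \in \Y$ is, by definition \eqref{eq:PO_def} of the projection oracle, exactly $\textsc{PO}_\Y\{\tilde y_{t+1}\}$, which is $y_{t+1}$ from Line~\ref{ln:y update}. This chain establishes \eqref{eq:y-min}.

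The main obstacle -- really the only thing requiring care rather than routine algebra -- is the bookkeeping in the "complete the square" step: one must verify that the cross terms between the two separate quadratic penalties $\tfrac{\eta}{2}\|y-x_{t+1}\|^2$ and $\tfrac{\alpha+2G^2\beta}{2}\|y-y_t\|^2$ combine correctly, so that the sum is genuinely of the form $\tfrac{c}{2}\|y-\tilde y_{t+1}\|^2 + \text{const}$ with $c = \alpha+2G^2\beta+\eta$ and the claimed $\tilde y_{t+1}$. This is a standard identity (a weighted sum of squared distances to two points equals a single scaled squared distance to their convex-combination-type average plus a constant), but the presence of the linear term $-\inner{p_t}{y}$ must be folded in as well; the cleanest route is to argue directly via the first-order optimality condition for strongly convex minimization over $\Y$ (the variational inequality $\inner{\nabla\Phi_t(y_{t+1}) }{y - y_{t+1}} \ge 0$ for all $y\in\Y$) rather than literally completing the square, since that sidesteps tracking the additive constant entirely. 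I would present it that way: show $y_{t+1} = \textsc{PO}_\Y\{\tilde y_{t+1}\}$ satisfies the variational inequality characterizing the minimizer of $\Phi_t$ over $\Y$, and invoke uniqueness.
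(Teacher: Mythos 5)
Your proposal is correct and is essentially the paper's own argument run in the opposite direction: the paper starts from $y_{t+1}=\textsc{PO}_\Y\{\tilde y_{t+1}\}$, expands $\|y-\tilde y_{t+1}\|^2$ using Lines~\ref{ln:y_tild gradient} and \ref{ln:y_tild }, and regroups terms into \eqref{eq:y-min}, whereas you start from the objective in \eqref{eq:y-min}, identify its unconstrained minimizer as $\tilde y_{t+1}$, and collapse it to a single squared distance. Your worry about cross terms is unfounded since both quadratic penalties have identity-scaled Hessians, so the sum is again $\tfrac{\alpha+2G^2\beta+\eta}{2}\|y-\cdot\|^2$ plus a constant and the direct completion of the square goes through without incident.
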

\begin{proof} 
    See Appendix~\ref{apx: proofs}. 
\end{proof}

\begin{lemma}
\label{lem:pushback} [Pushback lemma] 
    Let function $r:\V\to\mathbb{R}$ be convex function and let $\Y \subseteq \V$ be a convex set. Fix $\zeta>0$, $\tilde{x} \in \V$. Suppose there exists a point $y$ such that: 
    \begin{equation*}
        y = \arg\min_{x \in \Y} \left\{r(x) + \zeta\|x-\tilde{x}\|^2\right\} 
        .
    \end{equation*} 
    Then 
    \begin{equation*} 
        r(y) + \zeta\|y-\tilde{x}\|^2 \leq r(z) + \zeta\|z-\tilde{x}\|^2 - \zeta\|z-y\|^2 \quad \forall z \in \Y
        .
    \end{equation*} 
\end{lemma}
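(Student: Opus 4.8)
The plan is to use the first-order optimality condition for the strongly convex minimization that defines $y$, together with the standard three-point identity for the squared norm. Write $\phi(x) = r(x) + \zeta\|x-\tilde{x}\|^2$; this is a convex function (indeed $\zeta$-strongly convex relative to $\|\cdot\|^2$, since $r$ is convex and $\zeta\|x-\tilde{x}\|^2$ contributes strong convexity with modulus $2\zeta$). Because $y$ minimizes $\phi$ over the convex set $\Y$, the variational inequality gives a subgradient $\xi \in \partial\phi(y)$ with $\inner{\xi}{z-y}\geq 0$ for all $z\in\Y$. Splitting $\partial\phi(y) = \partial r(y) + 2\zeta(y-\tilde{x})$, there is $r'(y)\in\partial r(y)$ such that $\inner{r'(y) + 2\zeta(y-\tilde{x})}{z-y}\geq 0$ for all $z\in\Y$.

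Next I would expand the right-hand side. For any $z\in\Y$, convexity of $r$ gives $r(z) \geq r(y) + \inner{r'(y)}{z-y}$. For the quadratic term use the exact identity $\|z-\tilde{x}\|^2 = \|y-\tilde{x}\|^2 + 2\inner{y-\tilde{x}}{z-y} + \|z-y\|^2$. Multiplying by $\zeta$ and adding,
\begin{equation*}
r(z) + \zeta\|z-\tilde{x}\|^2 \geq r(y) + \zeta\|y-\tilde{x}\|^2 + \inner{r'(y) + 2\zeta(y-\tilde{x})}{z-y} + \zeta\|z-y\|^2.
\end{equation*}
By the optimality condition the inner-product term is nonnegative, so it can be dropped, yielding
\begin{equation*}
r(z) + \zeta\|z-\tilde{x}\|^2 \geq r(y) + \zeta\|y-\tilde{x}\|^2 + \zeta\|z-y\|^2,
\end{equation*}
which rearranges to exactly the claimed inequality.

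I do not anticipate a genuine obstacle here; the only point requiring a little care is the justification of the first-order optimality condition in the stated generality — namely that $y = \arg\min_{x\in\Y}\phi(x)$ implies the existence of $r'(y)\in\partial r(y)$ with $\inner{r'(y)+2\zeta(y-\tilde{x})}{z-y}\geq 0$ on $\Y$. Since $r$ is real-valued and convex on all of $\V$ (hence continuous, with nonempty subdifferential everywhere) and $\Y$ is convex, this is a standard consequence of the sum rule for subdifferentials plus the optimality characterization over a convex set (e.g.\ Prop.\ B.24 in \cite{bertsekas1997nonlinear}, as already invoked in the proof of Lemma~\ref{lem:Lagrange}); the differentiable quadratic part causes no difficulty. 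One should also note the minimizer is unique by strong convexity, so "$\arg\min$" is well defined, though uniqueness is not actually needed for the inequality. Everything else is the routine three-point expansion above.
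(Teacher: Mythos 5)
Your proof is correct and is precisely the standard argument the paper points to: it defers to the literature, describing the proof as relying on "the first-order optimality condition and the definition of strong convexity," which is exactly the variational inequality plus three-point expansion you carry out. No gaps; your side remarks on the subdifferential sum rule and on uniqueness of the minimizer are also accurate.
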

\begin{proof}
    This lemma and its proof, which relies on the first-order optimality condition and the definition of strong convexity, can be found in various forms in \cite{prox-siam93, SGD-robust, tseng2008accelerated}
\end{proof}

\begin{proof}[Proof of Theorem~\ref{thm:First theorem}] Fix $t \in \{1, 2, \ldots, T-1\}$. 
By definition of $y_{t+1}$ as the minimizer in \eqref{eq:y-min} we have by the pushback lemma (and the fact $x^*\in\Y$):
\begin{align}\label{eq:saved1}
\begin{split}
    &\eta\inner{Q_t} {y_{t+1}-x_{t+1}}
    +\inner{s_t} {y_{t+1}-y_t}
    +\beta \innerTp{W_t+h(y_t)} {l_t(y_{t+1})}
    \\*&
    +\frac{\eta}{2}\|y_{t+1}-x_{t+1}\|^2
    +\frac{\alpha+2G^2\beta}{2}\|y_{t+1}-y_t\|^2
    \\\leq&
    \eta \inner{Q_t} {x^*-x_{t+1}}
    +\inner{s_t} {x^*-y_t}
    +\beta \innerTp{W_t+h(y_t)} {l_t(x^*)}
    \\*&
    +\frac{\eta}{2}\|x^*-x_{t+1}\|^2
    +\frac{\alpha+2G^2\beta}{2} \left(\|x^*-y_t\|^2 - \|x^*-y_{t+1}\|^2\right)
    .
\end{split}
\end{align}
Denote the right-hand-side and left-hand-side of the inequality above as $\textbf{RHS}_t$ and $\textbf{LHS}_t$, respectively.

By completing the square, we obtain:
\begin{equation*}
    \inner{s_t}{y_{t+1} - y_t} + \frac{\alpha}{2}\|y_{t+1} - y_t\|^2 \geq -\frac{\|s_t\|^2}{2\alpha}
    .
\end{equation*} 
Substituting this inequality into the $\textbf{LHS}_t$ gives
\begin{align*}
    \textbf{LHS}_t
    \geq&
    \eta\inner{Q_t} {y_{t+1}-x_{t+1}}
    +\beta \innerTp{W_t+h(y_t)} {l_t(y_{t+1})}
    -\frac{\|s_t\|^2}{2\alpha}
    \\*&
    +\frac{\eta}{2}\|y_{t+1}-x_{t+1}\|^2
    +\frac{2G^2\beta}{2}\|y_{t+1}-y_t\|^2   
    \\\geq&
    \frac{\eta}{2}\|Q_{t+1}\|^2
    -\frac{\eta}{2}\|Q_{t}\|^2 
    +\frac{G^2\beta}{2}\|y_{t+1}-y_t\|^2
    -\frac{\|s_t\|^2}{2\alpha}
    \\*&
    +\frac{\beta}{2}\|W_{t+1}\|_2^2-\frac{\beta}{2}\|W_t\|_2^2
    -\frac{\beta}{2}\|\left[-h(y_{t+1})\right]_+\|_2^2
    +\frac{\beta}{2}\left\|h(y_{t})\right\|_2^2
\end{align*}
where the last inequality uses Equations~\eqref{eq:lem2_Q_squared}, and \eqref{eq:drift}.
By taking expectations and summing over $t \in \{1, 2, \ldots, T-1\}$, and using the inequality $\|\left[-x\right]_+\|_2\leq \left\|x\right\|_2$, we obtain:
\begin{multline}\label{eq:savedforsecondproof}
    \sum_{t=1}^{T-1}\E\left\{\textbf{LHS}_t\right\}
    \geq
    \frac{\eta}{2}\E\left\{
    \|Q_{T}\|^2
    -\|Q_{1}\|^2 \right\}
    +\frac{G^2\beta}{2}\sum_{t=1}^{T-1}\E\left\{\|y_{t+1}-y_t\|^2\right\}
    \\*
    -\sum_{t=1}^{T-1}\frac{\E\left\{\|s_t\|^2\right\}}{2\alpha}    
    +\frac{\beta}{2}\E\left\{
    \|W_{T}\|_2^2
    -\|W_1\|_2^2
    -\|\left[-h(y_{T})\right]_+\|_2^2
    +\left\|h(y_{1})\right\|_2^2
    \right\}
\end{multline}
Lines \ref{ln:Q1}, \ref{ln:W_1}, and \ref{ln:W_update} of Algorithm~\ref{alg:functional} lead to the following implications, respectively:
\begin{align*}
    &
    Q_1=\textbf{0}
    \\*&
    \|W_1\|_2 = \|\left[-h(y_{1})\right]_+\|_2 \leq \|h(y_{1})\|_2
    \\*&
    \|W_T\|_2\geq \|\left[-h(y_{T})\right]_+\|_2
\end{align*}
Utilizing the inequalities mentioned above and dropping the positive term $\|y_{t+1}-y_t\|^2$ (we will use $\|y_{t+1}-y_t\|^2$ when proving Theorem~\ref{thm:Constraints violation}), Equation~\eqref{eq:savedforsecondproof} becomes:
\begin{equation}\label{eq:LHS1}
    \sum_{t=1}^{T-1}\E\left\{\textbf{LHS}_t\right\}
    \geq
    \frac{\eta}{2}\E\left\{
    \|Q_{T}\|^2 \right\}
    -\sum_{t=1}^{T-1}\frac{\E\left\{\|s_t\|^2\right\}}{2\alpha}
    .
\end{equation}

Now consider the $\textbf{RHS}_t$ of \eqref{eq:saved1}. Given Assumption~\ref{assum:X_bounded}, we have $\|x^* - x_{t+1}\|\leq D$. Using this and taking the expectation yields:
\begin{multline*}
    \E\left\{\textbf{RHS}_t\right\} 
    \leq
    \eta\E\left\{\inner{Q_t} {x^*-x_{t+1}}\right\} 
    +\E\left\{\inner{s_t} {x^*-y_t}\right\} 
    \\*+\beta \E\left\{\innerTp{W_t+h(y_t)} {l_t(x^*)}\right\} 
    +\frac{\eta D^2}{2}
    +\frac{\alpha+2G^2\beta}{2} \E\left\{\|x^*-y_t\|^2 - \|x^*-y_{t+1}\|^2\right\}
\end{multline*}
Using \eqref{eq:l(x*)<0}, which states that $\E\left\{ \innerTp{W_t+h(y_t)} {l_t(x^*)}\right\}\leq 0 $, and \eqref{eq:line-6}, which states that $\E\left\{\inner{Q_t}{x^*-x_{t+1}}\right\} \leq \delta$, we can further simplify the expression as follows:
\begin{multline*}
    \E\left\{\textbf{RHS}_t\right\} 
    \leq
    \eta\delta 
    +\E\left\{\inner{s_t} {x^*-y_t}\right\} 
    \\*
    +\frac{\alpha+2G^2\beta}{2} \E\left\{\|x^*-y_t\|^2 - \|x^*-y_{t+1}\|^2\right\}
    +\frac{\eta D^2}{2}
    .
\end{multline*}
Line~\ref{ln:y1} of the algorithm states $y_1=x_1\in\X$ thus Assumption~\ref{assum:X_bounded} implies $\|x^*-y_1\|\leq D$. Using this and summing over $t \in \{1, 2, \ldots, T-1\}$, we obtain:
\begin{multline}\label{eq:RHS1}
    \sum_{t=1}^{T-1}\E\left\{\textbf{RHS}_t\right\} 
    \leq
    \sum_{t=1}^{T-1}\E\left\{\inner{s_t} {x^*-y_t}\right\} 
    -\frac{\alpha+2G^2\beta}{2} \E\left\{\|x^*-y_{T}\|^2\right\}
    \\*
    +\frac{\alpha+2G^2\beta}{2} D^2
    +T\eta\frac{ D^2 + 2\delta}{2}
    ,
\end{multline}
where in the last term we used $T-1\leq T$ to simplify it.

Substituting Equations \eqref{eq:LHS1} and \eqref{eq:RHS1} into \eqref{eq:saved1} and rearranging the terms, we obtain:
\begin{align}\label{eq:saved2}
\begin{split}
    \sum_{t=1}^{T-1}\E\left\{\inner{s_t} {y_t-x^*}\right\} 
    \leq&
    -\frac{\eta}{2}\E\left\{
    \|Q_{T}\|^2 \right\}
    -\frac{\alpha+2G^2\beta}{2} \E\left\{\|x^*-y_{T}\|^2\right\}
    \\*&+
    \frac{\alpha+2G^2\beta}{2} D^2
    +T\eta\frac{ D^2 + 2\delta}{2}
    +\sum_{t=1}^{T-1}\frac{\E\left\{\|s_t\|^2\right\}}{2\alpha}
\end{split}
\end{align}
Consider the following:
\begin{equation*}
    0\leq \frac{1}{2}\left\|\frac{s_T}{\sqrt{\alpha}}+\sqrt{\alpha}(x^*-y_T)\right\|^2
    =
    \frac{\|s_T\|^2}{2\alpha}+\frac{\alpha}{2}\|x^*-y_T\|^2
    +\inner{s_T}{x^*-y_T}
\end{equation*}
Taking expectation we can simply write
\begin{equation}\label{eq:extra_term}
    \E\left\{\inner{s_T}{y_T-x^*}\right\}
    \leq
    \frac{\E\left\{\|s_T\|^2\right\}}{2\alpha}+\frac{\alpha}{2} \E\left\{\|x^*-y_T\|^2\right\}
\end{equation}
Replacing \eqref{eq:extra_term} in \eqref{eq:saved2} and dropping the negative term $-\frac{2G^2\beta}{2} \E\left\{\|x^*-y_{T}\|^2\right\}$, we get:
\begin{align}\label{eq:saved3}
\begin{split}
    \sum_{t=1}^{T}\E\left\{\inner{s_t} {y_t-x^*}\right\} 
    \leq&
    -\frac{\eta}{2}\E\left\{
    \|Q_{T}\|^2 \right\}
    +\sum_{t=1}^{T}\frac{\E\left\{\|s_t\|^2\right\}}{2\alpha}
    \\*&+
    \frac{\alpha+2G^2\beta}{2} D^2
    +T\eta\frac{ D^2 + 2\delta}{2}
\end{split}
\end{align}
Remember we defined $\Bar{y}_T=\frac{1}{T}\sum_{t=1}^{T}y_t$. For the left-hand-side of the \eqref{eq:saved3} we can write:
\begin{align*}
    \sum_{t=1}^{T} \E\left\{\inner{s_t} {y_t-x^*}\right\}
    \overset{\text{(a)}}{\geq}&
    \sum_{t=1}^{T} \left(\E\left\{f(y_t)\right\} - f(x^*)\right)
    \\\overset{\text{(b)}}{\geq}&
    T \E\left\{ f\left(\frac{1}{T}\sum_{t=1}^{T}y_t\right)\right\} - T f(x^*)
    \\=&
    T \E\left\{f(\Bar{y}_T)\right\} - T f(x^*)
    \\\overset{\text{(c)}}{\geq}&
    T \E\left\{f(\Bar{x}_T) - L\|\Bar{y}_T-\Bar{x}_T\|\right\}  - T f(x^*)
\end{align*}
where (a) holds by Equation~\eqref{eq:iterated-gradient}; (b) holds by Jensen's inequality; and (c) relies on the Lipschitz continuity of $f$ as established in Lemma~\ref{lem:h_lip}.   
Substituting this in Equation~\eqref{eq:saved3} we get:
\begin{multline}\label{eq:saved4}
    T \E\left\{f(\Bar{x}_T)\right\}  - T f(x^*)
    \leq 
    \\T L\E\left\{\|\Bar{y}_T-\Bar{x}_T\|\right\} 
    -\frac{\eta}{2}\E\left\{
    \|Q_{T}\|^2 \right\}
    +\sum_{t=1}^{T}\frac{\E\left\{\|s_t\|^2\right\}}{2\alpha}
    +
    \frac{\alpha+2G^2\beta}{2} D^2
    +T\eta\frac{ D^2 + 2\delta}{2}
\end{multline}
Equation \eqref{eq:Q_T=xbar-ybar} states $Q_T=T\left(\Bar{y}_T-\Bar{x}_T\right)$. Thus by completing the square we can write:
\begin{equation*}
    T L\E\left\{\|\Bar{y}_T-\Bar{x}_T\|\right\} 
    -\frac{\eta}{2}\E\left\{
    \|Q_{T}\|^2 \right\}
    =
    T L\E\left\{\|\Bar{y}_T-\Bar{x}_T\|\right\} 
    -\frac{\eta}{2}\E\left\{
    T^2\|\Bar{y}_T-\Bar{x}_T\|^2\right\}
    \leq 
    \frac{L^2}{2\eta}.
\end{equation*}
Finally, by employing the above inequality in \eqref{eq:saved4} and dividing both sides by $T$, we obtain:
\begin{equation*}
    \E\left\{f(\Bar{x}_T)\right\}  -  f(x^*)
    \leq
    \frac{L^2}{2T\eta}+\eta \frac{D^2+2\delta }{2}
    +\sum_{t=1}^{T}\frac{\E\left\{\|s_t\|^2\right\}}{2T\alpha}
    +\frac{\alpha D^2}{2T} 
    +\frac{G^2 D^2\beta}{T} 
\end{equation*}
Using Assumption~\ref{assum:SFO} to bound $\E\left\{\|s_t\|^2\right\}\leq L^2$
completes the proof.

\end{proof}

\section{Numerical Experiments}
In this section, we experiment with our algorithm in two different scenarios. In Section~\ref{sec:first-experiment}, a regression problem is considered where the objective function is nonsmooth, and there are no functional constraints. Two versions of our algorithm, one using inexact LMO and the other using exact LMO, are compared with other existing approaches. In Section~\ref{sec:second-experiment}, we consider a small network flow problem and run our algorithm on two different formulations of the same problem: the first one includes all the constraints in the feasible set $\mathcal{X}$ and has no functional constraints ($h \equiv 0$), while the second formulation removes some of the constraints from the set $\mathcal{X}$ and includes them as functional constraints. In the second case, the resulting set $\mathcal{X}$ has a simpler LMO, but at the cost of possible violations of those constraints formulated as functional constraints.

\subsection{Robust Reduced Rank Regression with Nuclear Norm Relaxation}
\label{sec:first-experiment}

The problem of multi‐output regression \cite{borchani2015survey},
which is a special case of multi-task learning \cite{zhang2021survey}, can be defined as follows.
Given a dataset consisting of $n$ samples, where each sample includes a response vector $y_i \in \Real^q$ and a predictor vector $x_i \in \Real^p$, we consider a multivariate linear regression model:\footnote{
Note that, to comply with the common notation used in statistics literature, $x$ and $y$ are given information of the problem in this section and are not the two variables of our algorithm. Our algorithm will run to find $\Bar{c}$ in this section.
}
\begin{equation*}
    y = cx + e
    .
\end{equation*}
Here, $y = (y_1, \ldots, y_n)$ is an $n \times q$ matrix of responses, and $x = (x_1, \ldots, x_n)$ is an $n \times p$ matrix of predictors. The $c$ is a $q \times p$ coefficient matrix, and $e = (e_1, \ldots, e_n)$ is a $q \times n$ matrix of independently and identically distributed random errors. The \textbf{goal} is to estimate $c$.

In traditional linear regression, it's often assumed that the errors follow a Gaussian distribution, which works well when the data conforms to this assumption. However, in cases where the data contains outliers or exhibits heavy tails that deviate from the Gaussian distribution, this assumption does not hold.


To address this, we assume that the error matrix $e$ in our model follows a Laplace distribution, also known as the double-exponential distribution. The Laplace distribution assigns more weight to the tails of the distribution compared to the Gaussian distribution, making it better suited for modeling data with outliers and heavy tails \cite{hampel2005robust,huber1992robust,lerasle2019selected}.


Reduced Rank Regression, introduced by Anderson in 1951 \cite{anderson1951estimating}, is a specific form of multi-output regression. It operates under the assumption that the rank of the coefficient matrix $c$ is small. 
This allows us to capture underlying patterns in the data efficiently. However, the low-rank constraint does not define a convex set. A convex relaxation of this constraint is possible by employing the nuclear norm function \cite{fazel2002matrix}. The nuclear norm encourages low-rank solutions by penalizing the sum of the singular values of $c$ \cite{chen2013reduced}.\footnote{For a more detailed discussion about the nuclear norm and its characteristics, see Appendix~\ref{apx: nuck}.}


Thus, our optimization problem can be formulated as follows:
\begin{align*}
\label{eq:R4NR}
\tag{R4NR}
\mbox{Minimize:} \quad & f(c)\coloneqq \frac{1}{n}\sum_{i=1}^n \|y_i- c  x_i\|_2 \\
\mbox{Subject to:} \quad 
&c \in \C_\gamma\coloneqq\{c \in\Real^{q\times p} : \|c \|_*\leq \gamma\}
\end{align*}
Here, $\gamma>0$ is a hyperparameter, $\|\cdot\|_2$ denotes the $l_2$-norm of a column vector, and $\|\cdot\|_*$ denotes the nuclear norm of a matrix.
The choice of the \emph{nonsmooth} $\|\cdot\|_2$ loss function, as opposed to the \emph{smooth} $\|\cdot\|_2^2$, which is common in regression, increases robustness against outliers. Intuitively, this approach is more robust as the loss grows linearly, rather than quadratically, when distancing from the true value \cite{ding2006r, ming2019probabilistic}.

\textbf{Numerical results:}
We generated synthetic data using the following configuration: $n=200$, $q=300$, $p=500$, and $\operatorname{rank}(c)=40$. Each element of the noise matrix, $\{e_{i,j}\}$, are i.i.d. samples of a Laplace distribution with parameters $\mu=0$ and $b=2$, and $\{x_{i,j}\}$ are i.i.d. samples of a standard normal distribution. 
We conducted simulations on four algorithms, three of which employed full SVD calculations:
\begin{itemize}
    \item Our novel algorithm incorporating an exact LMO.
    \item P-MOLES, as detailed in \cite{Thekumparampil2020OptimalNF}.
    \item Projected Subgradient Descent (PGD).
\end{itemize}
Additionally, we implemented our algorithm using an inexact LMO.
\footnote{See Appendix~\ref{apx: In-lmo} for a more detailed discussion.
}
The figures are derived from the average of ten distinct runs of the experiment, ensuring statistical reliability.

Figures~\ref{fig:Expected loss T_2} and~\ref{fig:Computation time T_2 vs error true} illustrate the expected loss relative to noiseless data for a fixed \(\gamma\) value of 350. The x-axes of the respective figures represent the number of iterations and the computational time. The computational gain obtained by using inexact LMO is quite visible in Figure~\ref{fig:Computation time T_2 vs error true}.

\begin{figure}[ht]
\centering
\includegraphics[height=17em]
{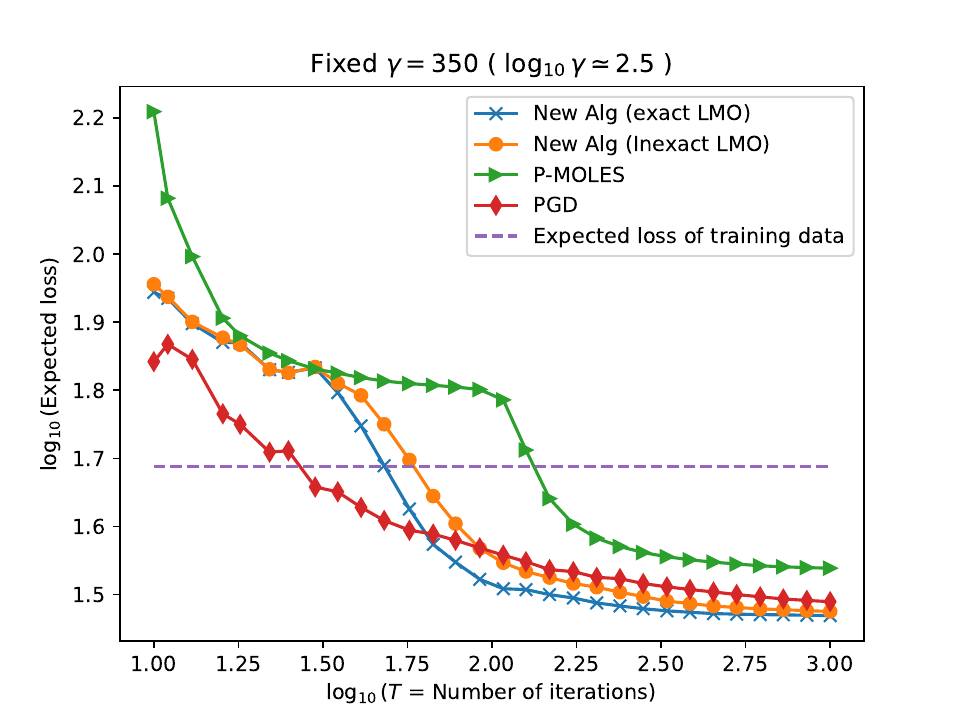}
\caption{Expected loss as a function of the number of iterations for \(\gamma = 350\), compared to noiseless data.}
\label{fig:Expected loss T_2}
\end{figure}

\begin{figure}[ht]
\centering
\includegraphics[height=17em]
{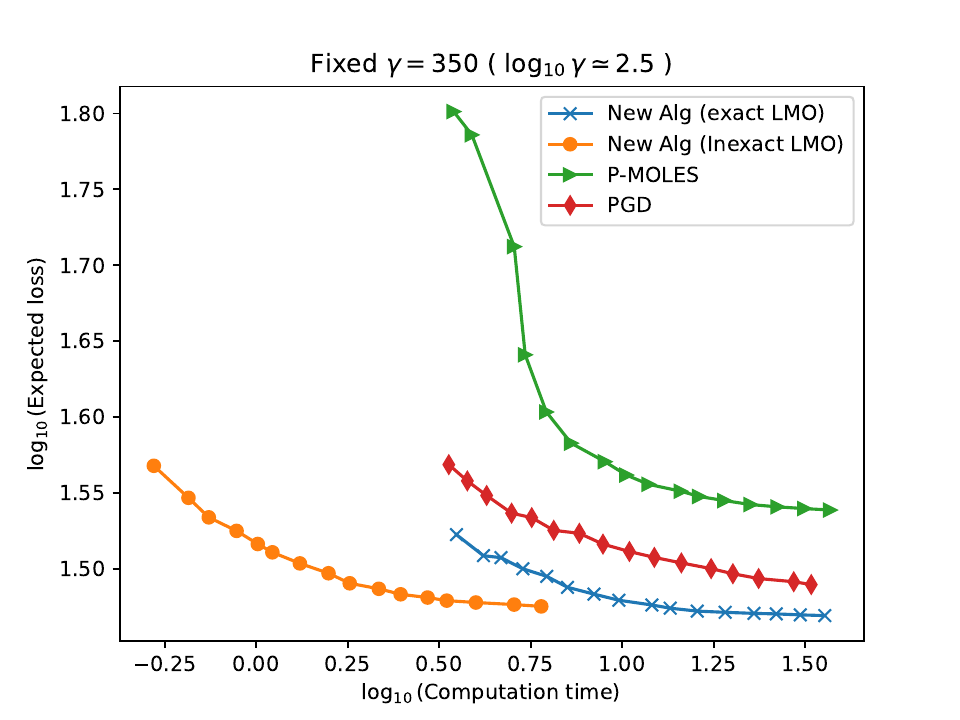}
\caption{Computational time versus error for inexact LMO with \(\gamma = 350\), demonstrating the computational efficiency.}
\label{fig:Computation time T_2 vs error true}
\end{figure}

In Figure~\ref{fig:Expected loss R_2}, we keep the number of iterations fixed at $T=300$ and run the algorithm for different values of $\gamma$. When $\gamma$ is very small, all four algorithms perform poorly (underfitting). On the other hand, as $\gamma$ becomes very large, the performance of all models starts to deteriorate (overfitting). It is worth noting that the PGD algorithm underperforms in comparison to our projection-free algorithm and the P-MOLES algorithm. This can be explained by the fact that, as mentioned before, methods like Frank-Wolfe implicitly encourage sparsity.
\footnote{This implicit regularization effect diminishes as the number of iterations increases. It remains an open question how to disentangle the number of iterations and the degree of sparsity enforced by Frank-Wolfe-type algorithms. One possible idea is to restart the algorithm after some iterations using the previous point as the initial point.}


\begin{figure}[ht]
\centering
\includegraphics[height=17em]
{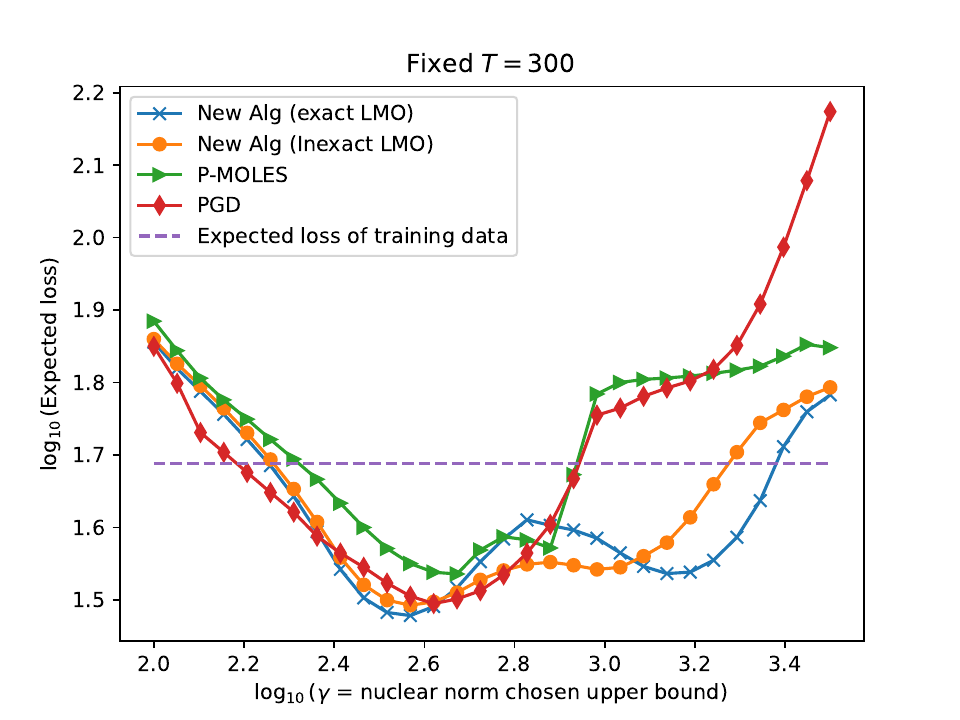}
\caption{Performance of four algorithms at \(T=300\) iterations. Performance dips for very small or large \(\gamma\).}
\label{fig:Expected loss R_2}
\end{figure}

\subsection{Minimum-Cost Flow in a Convex-Cost Network}
\label{sec:second-experiment}
Any network optimization problem with general convex costs and side constraints can be formulated into the form of Problem~\eqref{eq:problem2} (see, for example, \cite{bertsekas1998network}).There are numerous high-performance specialized algorithms available for linear network optimization problems. This is noteworthy because our approach effectively addresses a nonlinear optimization problem by sequentially solving multiple linear optimizations. The original Frank-Wolfe algorithm was used for such problems where there are no functional constraints and the cost function is smooth \cite{fratta1973flow, klessig1974algorithm}.

Consider a directed acyclic graph with a vertex set $V$ and an edge set $E$. Each edge $(i,j) \in E$ has an associated nonnegative capacity $k_{ij}$, which represents the maximum allowable flow over that edge. Let $x_{ij} \in \mathbb{R}_+$ denote the flow over edge $(i,j)$. The cost of routing a flow vector ${x} \in \mathbb{R}_+^{|E|}$ through this network is given by a cost function $f({x})$. Our objective is to route a fixed flow amount $d > 0$ from a source node $s$ to a target node $t$ at the minimum possible cost. Formally, the problem can be defined as follows:
\begin{align*}\label{eq:minimomcost}
\tag{Min-Flow}
\text{Minimize:}\quad 
&f(x)\coloneqq\sum_{(i,j)\in E} \max\left\{ a_{ij} x_{ij} + b_{ij} \,,\, c_{ij}\right\} \\
\text{Subject to:} \quad
&x\in \mathcal{P}\\*
&x_{ij}\leq k_{ij}, \quad \forall{ij}\in E
\end{align*}
The known nonnegative constants \(a_{ij}\), \(b_{ij}\), and \(c_{ij}\) describe the nonsmooth Lipschitz continuous convex function \(f: \mathbb{R}^{|E|} \to \mathbb{R}\). 
The set \(\mathcal{P}\) comprises all flows that satisfy the flow conservation laws, without considering the capacities, and is defined as:
\begin{equation*}
    \mathcal{P} \coloneqq \left\{ x \in \mathbb{R}_+^{|E|} : \sum_{\{j : (i,j) \in E\}} x_{ij} - \sum_{\{j : (j,i) \in E\}} x_{ji} = r_i, \, \forall i \in V \right\}
\end{equation*}
In our example, the vector $r \in \mathbb{R}^{|V|}$ is given by:\footnote{The vector \(r\) represents the net demand or supply at each node in the network.}
\begin{equation*}
    r_i = 
    \begin{cases} 
    -d & \text{if } i = t, \\
    +d & \text{if } i = s, \\
    0 & \text{otherwise.}
    \end{cases}
\end{equation*}
We reframe Problem~\eqref{eq:minimomcost} into the structure of \eqref{eq:problem3} in four distinct ways, as detailed in Table~\ref{tab1}. Define the following two sets:
\begin{align*}
    \X_{\textsc{cap}} \coloneqq& 
    \mathcal{P} \cap \left\{x \in \mathbb{R}_+^{|E|} : x_{ij} \leq k_{ij}, \forall (i,j) \in E\right\}, \\
    \Y_{\textsc{box}} \coloneqq&
    \left\{x \in \mathbb{R}_+^{|E|} : x_{ij} \leq \max\{d, k_{ij}\}, \forall (i,j) \in E\right\},
\end{align*}
and the following nonsmooth, Lipschitz continuous convex constraint function:
\[h_{\textsc{cap}}(x) \coloneqq \max\left\{x_{ij} - k_{ij} : (i,j) \in E\right\}.\]

\begin{table}[ht]
\caption{Overview of four reformulations of Problem~\eqref{eq:minimomcost} in the general form of Problem~\eqref{eq:problem3}.}
\label{tab1}
\begin{tabular}{@{}ccccc@{}}
\toprule
{} & { Formulation 1} & {Formulation 2} & { Formulation 3} & {Formulation 4} \\
\midrule
$\X$ & $\X_{\textsc{cap}}$  & $\X_{\textsc{cap}}$ & $\mathcal{P}$      & $\mathcal{P}$  \\
$\Y$ & $\mathbb{R}^{|E|}$   & $\Y_{\textsc{box}}$ & $\mathbb{R}^{|E|}$ & $\Y_{\textsc{box}}$   \\
$h$  & -                    & -                   & $h_{\textsc{cap}}$ & $h_{\textsc{cap}}$   \\
\botrule
\end{tabular}
\end{table}

Choosing $\X = \X_{\textsc{cap}}$ eliminates the need for functional constraints, while choosing $\X = \mathcal{P}$ enforces the capacity constraints using $h(x) \leq 0$. This has the following consequences:
\begin{enumerate}
    \item The linear minimization on the set $\mathcal{P}$ corresponds to a \emph{shortest path} problem \cite{bertsekas1998network}, whereas the linear minimization on $\X_{\textsc{cap}}$ represents a minimum-cost flow problem with a linear cost function. In terms of implementation, finding the shortest path on a directed acyclic graph can be achieved in $\Theta(|V| + |E|)$ using \emph{topological sorting} \cite{cormen2022introduction}. In contrast, solving a linear minimum-cost flow problem is challenging; however, the \emph{network simplex algorithm} in practice operates with an average time complexity of $\mathcal{O}(|E| \cdot |V|)$ \cite{kovacs2015minimum}.

    \item This computational gain may come at the cost of violating capacity constraints, i.e., $h(\bar{x}) \not\leq 0$.
\end{enumerate}
The two different choices of $\Y = \Y_{\textsc{BOX}}$ and $\Y = \mathbb{R}^{|E|}$ are made to observe if choosing a smaller set $\Y$ affects our algorithm's performance. Both of these sets satisfy our assumptions of easy projection and $\X \subseteq \Y$.

The network graph that we used is shown in Figure~\ref{fig:graph}. The numbers on the edges indicate the capacities, \( k_{ij} \). The parameters were set as follows: \( d = 4.1 \), \( a_{ij} = \exp \left( {k_{ij}}/{10} \right) \), \( b_{ij} = {k_{ij}}/{10} \), and \( c_{ij} = {k_{ij}}/{5} \).
Figure~\ref{fig:Net_flow_cost} illustrates the cost versus the number of iterations parameter \( T \) on a log-log scale for all four formulations. 
No significant difference between the two choices of the set $\mathcal{Y}$ is visible. While Formulations 3 and 4 converge slower than Formulations 1 and 2 for the same number of iterations $T$, they compensate in actual running time with a faster LMO oracle.\footnote{
The computational gain difference between linear minimization on $\mathcal{X}_{\textsc{CAP}}$ and $\mathcal{P}$ was significant in our experiment, approximately 50-fold. However, the corresponding figure is not depicted here, as we did not optimize either LMO.}
Figure~\ref{fig:Net_flow_h} shows the value of the functional constraint $h(x)$ (positive values indicate violation). This plot highlights that our algorithm's output may violate the capacity constraints when enforced by functional constraints (as in Formulations 3 and 4) instead of set constraints (as in Formulations 1 and 2).

\begin{figure}[ht]
\centering
\includegraphics[height=8em]{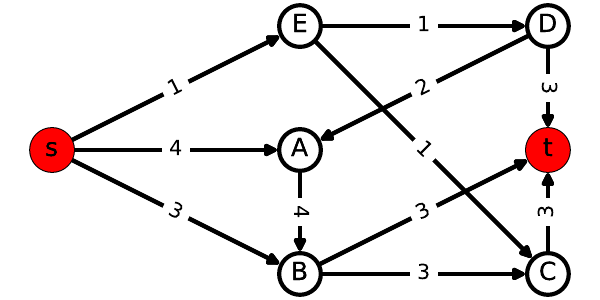}
\caption{The directed acyclic graph used in the experiment has nonnegative capacities on each edge. The objective is to route a fixed amount of flow from the source node $s$ to the target node $t$ at the minimum possible cost.}
\label{fig:graph}
\end{figure}



\begin{figure}[ht]
    \centering
    \begin{minipage}{0.49\textwidth}
        \centering
        \includegraphics[trim={5 0 40 30}, clip, width=\linewidth]{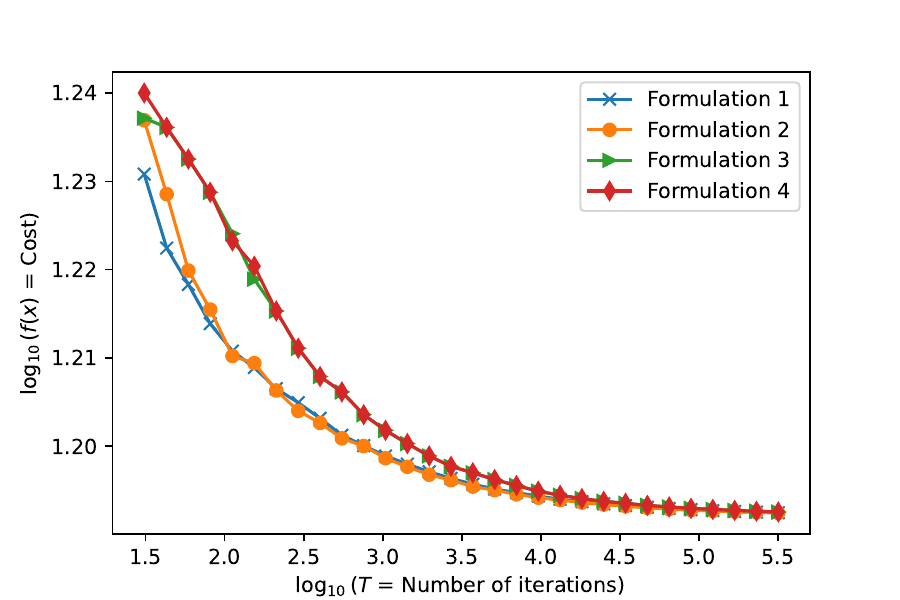}
        \caption{The cost vs. number of iterations parameter \( T \) is shown on a log-log scale for all four formulations. There is no significant difference between the two \(\mathcal{Y}\) choices. While Formulations 3 and 4 converge slower than 1 and 2, they compensate with a faster LMO oracle (not depicted here).}
        \label{fig:Net_flow_cost}
    \end{minipage}
    \hfill
    \begin{minipage}{0.49\textwidth}
        \centering
        \includegraphics[trim={5 0 40 30}, clip, width=\linewidth]{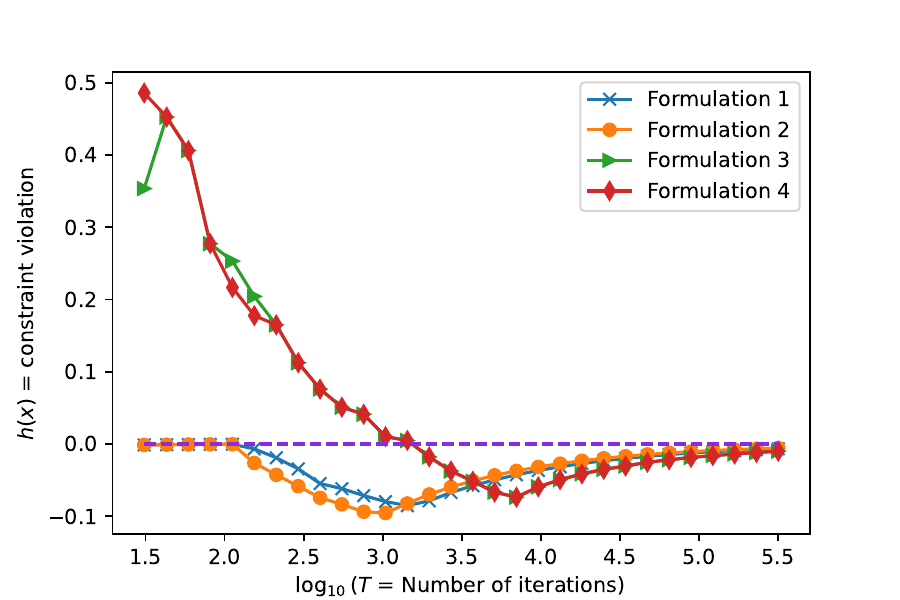}
        \caption{The figure illustrates the value of the constraint function \( h(x) \) for all four formulations. Values below zero indicate no violation of capacity constraints. Formulations 1 and 2 consistently stay within the capacity region, so no violations occur for them.}
        \label{fig:Net_flow_h}
    \end{minipage}
\end{figure}

\section{Conclusions and Open Problems}\label{sec:Conclusions}

This paper tackles the problem of solving general convex optimization with functional constraints without projecting onto the feasible set. Previous studies on projection-free algorithms mainly focused on smooth problems and/or did not consider functional constraints. Our experiments and convergence theorems demonstrate that our algorithm performs comparably to projected stochastic subgradient descent methods, making it a viable alternative in scenarios where projection-free approaches are preferred.

An open problem is whether our algorithm can incorporate benefits from mirror descent \cite{doi:10.1137/1027074,beck2003mirror}, an established method that substitutes the norm in projected subgradient descent with Bregman divergence, leading to enhanced performance in specific contexts, such as when dealing with a probability simplex. 
Another question is whether, similar to projected subgradient descent, we can achieve an improved rate for nonsmooth, \emph{strongly convex} optimization \cite{bach2013nonstronglyconvex}.
Another important area to explore is how our method works in online settings. This is especially relevant since projection-free online convex optimization is a highly discussed topic today \cite{hazan2020faster, garber2022new, gatmiry2023projectionfree, pmlr-v178-mhammedi22a, 10.5555/3042573.3042808, garber2023projection}.

Additionally, it is worth investigating whether the subgradient can be computed on points within the feasible set \( \X \), rather than using \( y_t \in\Y\) that may lie outside of the set \( \X \). Notably, our algorithm is not unique in utilizing subgradients outside the feasible set \cite{lu2023projection,thekumparampil2020projection,Thekumparampil2020OptimalNF,grimmer2023radial,grimmer2022radial,doi:10.1137/110831659,Tao_2019}.

\section*{Declarations}

\bmhead{Funding}
This research received partial support from the National Science Foundation (NSF), including NSF CCF 1718477 and NSF SpecEES 1824418 grants.

\bmhead{Code and data availability} The data used in our simulation is synthetic. The codes and synthetically generated data are accessible at \href{https://github.com/kamiarasgari/Nonsmooth-Projection-Free-Optimization-with-Functional-Constraints}{github.com/kamiarasgari}.

\bmhead{Competing interests}
The authors declare that they have no competing interests relevant to the content of this article.

\begin{appendices}

\section{Remaining Proofs}\label{apx: proofs}
\begin{proof}[Proof of Lemma~\ref{lem:h_lip}]

\textbf{Lipschitz continuity of $f$:}
    Let $x,y\in\Y$. Consider stochastic subgradients $s_x$ of $f$ at $x$. This vector, as per Assumption~\ref{assum:SFO}, satisfies the following conditions:
    \begin{equation*}
        \E\left\{s_x \middle| x\right\} \in \partial f(x),
        \quad
        \sqrt{\E\left\{\norm{s_x}^2 \middle| x\right\}} \leq L,
    \end{equation*} 
    Exploiting the convexity of $f$ and applying the Cauchy–Schwarz inequality, we arrive at:
    \begin{equation*}
        f(x)-f(y)\leq\inner{\E\{s_x \middle| x\}}{x-y}\leq \norm{\E\{s_x \middle| x\}} \|x-y\|
        .
    \end{equation*}
    Using Jensen's inequality for the norm function (which is convex) and the nonnegativity of the variance, we can further deduce:
    \begin{equation*}
        \norm{\E\{s_x \middle| x\}} 
        \leq
        \E\left\{\norm{s_x}\middle| x\right\}
        \leq 
        \sqrt{\E\left\{\norm{s_x}^2 \middle| x\right\}} \leq L
    \end{equation*}
    These implications lead to:
    \begin{equation*}
        f(x)-f(y)\leq L \|x-y\|
        .
    \end{equation*}
    
    Similarly, considering a stochastic subgradient $s_y$ of $f$ at point $y$, we can deduce:
    \begin{equation*}
        f(y)-f(x)\leq L \|x-y\|
        .
    \end{equation*}
    This concludes the proof of Lipschitz continuity for $f$.

\textbf{Lipschitz continuity of $h_i$:}
    Let $x,y\in\Y$. Consider stochastic subgradients $g_x$ of $h_i$ at $x$. This vector, as per Assumption~\ref{assum:SFO}, satisfy the following conditions:
    \begin{equation*}
        \E\left\{g_x \middle| x\right\} \in \partial h_i(x),
        \quad
        \norm{g_x}\leq G_i,
    \end{equation*} 
    Similar to the analysis of function $f$, by exploiting the convexity of $h_i$, applying the Cauchy–Schwarz inequality, and using Jensen's inequality we arrive at:
    \begin{equation*}
        h_i(x)-h_i(y)\leq  \E\left\{\norm{g_x}\middle| x\right\} \|x-y\| \leq G_i \|x-y\|
        .
    \end{equation*}
    
    Similarly, considering a stochastic subgradient $g_y$ of $h_i$ at point $y$, we can deduce:
    \begin{equation*}
        h_i(y)-h_i(x)\leq G_i \|x-y\|
        .
    \end{equation*}
    This concludes the proof of Lipschitz continuity for $h_i$. 

\textbf{Lipschitz continuity of $h$:}
    Lipschitz continuity of $h_i$ implies:
    \begin{equation*}
        (h_i(y)-h_i(x))^2\leq G_i^2 \|x-y\|^2
        .
    \end{equation*}
    By summing over $i\in\{1,\ldots,m\}$ we get:
    \begin{equation*}
        \norm{h(y)-h(x)}^2_2
        \leq 
        \sum_{i=1}^m G_i^2 \|x-y\|^2
        \leq
        G^2 \|x-y\|^2
        .
    \end{equation*}
    This concludes the proof of Lipschitz continuity for $h$.
\end{proof}

\begin{proof}[Proof of Lemma~\ref{lem:y-min}]
We initiate our proof with Line~\ref{ln:y update}, which states that $y_{t+1}= \textsc{PO}_\Y\{\tilde{y}_{t+1}\}$. By applying the definition of projection from Equation~\eqref{eq:PO_def}, we can express it as:
\begin{equation*}
    y_{t+1} = 
    \arg\min_{y\in\Y}\{\|y-\tilde{y}_{t+1}\|\}
    = \arg\min_{y\in\Y}\{\|y-\tilde{y}_{t+1}\|^2\}
    .
\end{equation*}
Now, utilizing Line~\ref{ln:y_tild }, we obtain:
\begin{equation*}
        \|y-\tilde{y}_{t+1}\|^2 
        =
        \left\|y-\frac{\left(\alpha+2G^2\beta\right)y_t+\eta x_{t+1}-p_t}{\alpha+2G^2\beta+\eta}\right\|^2
\end{equation*}
Define $\Omega\coloneqq\alpha+2G^2\beta+\eta$. This allows us to further simplify it as:
\begin{equation*}
    y_{t+1}
    = \arg\min_{y\in\Y}
    \left\{\Omega\|y\|^2
    -2\inner{\left(\alpha+2G^2\beta\right)y_t+\eta x_{t+1}}
    {y}
    +2\inner{p_t}
    {y}
    \right\}
\end{equation*}
Continuing with the simplification and invoking Line~\ref{ln:y_tild gradient}, which defines the temporary variable $p_t$, we arrive at:
\begin{align*}
 y_{t+1}
    =&
    \arg\min_{y\in\Y}\Bigg\{\frac{\eta}{2}\|y-x_{t+1}\|^2
    +\frac{\alpha+2 G^2\beta}{2}\|y-y_t\|^2
    \\*&
    +\inner{\eta Q_t}{y}
    +\inner{s_t}{y}
    +\inner{\beta \sum_{i=1}^m(W_{i,t}+h_i(y_t))g_{i,t}}{y}
    \Bigg\}
    \\=& \arg\min_{y\in\Y}\Bigg\{\frac{\eta}{2}\|y-x_{t+1}\|^2
    +\frac{\alpha+2 G^2\beta}{2}\|y-y_t\|^2
    +\eta \inner{Q_t}{y-x_{t+1}}
    \\*&
    +\inner{s_t}{y-y_t}
    +\beta \sum_{i=1}^m(W_{i,t}+h_i(y_t))\left(h_i(y_t)+\inner{g_{i,t}}{y}\right)
    \Bigg\}
    .
\end{align*}
Finally, by utilizing the linearized function $l_t$ defined in Equation~\eqref{eq:linear-func}, we conclude the proof.
\end{proof}

\subsection{Proof of Theorem~\ref{thm:Constraints violation}}
\label{apx: proof of theorem 2}

\begin{lemma} \label{lem:[h(Bar x)]+}
Line~\ref{ln:W_update} implies the following inequality:
\begin{equation*}
    \|\left[h(\Bar{x}_T)\right]_+\|_2
    \leq
    \frac{\|W_{T}\|_2 + \|\left[h(y_T)\right]_+\|_2}{T} 
    +\frac{G}{T}\sum_{t=1}^{T-1}\|y_{t+1}-y_{t}\|
    + G\|\Bar{y}_T-\Bar{x}_T\|
    .
\end{equation*}
\end{lemma}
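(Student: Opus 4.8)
The plan is to exploit the one-sided recursion that Line~\ref{ln:W_update} forces on each coordinate $W_{i,t}$, telescope it, and then turn the resulting lower bound on a running average of the values $h_i(y_t)$ into an upper bound on $h_i(\bar x_T)$ using Jensen's inequality and the Lipschitz continuity of $h_i$ from Lemma~\ref{lem:h_lip}.

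First I would keep only the first argument of the ``$\max$'' in Line~\ref{ln:W_update}, which gives, for every $t\in\{1,\dots,T-1\}$,
\begin{equation*}
    W_{i,t+1}\geq W_{i,t}+h_i(y_t)+\inner{g_{i,t}}{y_{t+1}-y_t}=W_{i,t}+l_{i,t}(y_{t+1}),
\end{equation*}
with $l_{i,t}$ the linearization \eqref{eq:linear-func}. Telescoping over $t$ and using $W_{i,1}=[-h_i(y_1)]_+\geq 0$ (Line~\ref{ln:W_1}) yields $W_{i,T}\geq\sum_{t=1}^{T-1}l_{i,t}(y_{t+1})$. I would then bound each summand below by $l_{i,t}(y_{t+1})\geq h_i(y_t)-G_i\|y_{t+1}-y_t\|$ via Cauchy--Schwarz and the deterministic bound $\|g_{i,t}\|\leq G_i$ of Assumption~\ref{assum:SFO} --- note this uses only the norm bound, not that $g_{i,t}$ is itself a subgradient. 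Summing and writing $\sum_{t=1}^{T-1}h_i(y_t)=\sum_{t=1}^{T}h_i(y_t)-h_i(y_T)$, Jensen's inequality for the convex $h_i$ gives $\sum_{t=1}^{T}h_i(y_t)\geq T h_i(\bar y_T)$, while $-h_i(y_T)\geq -[h_i(y_T)]_+$; finally Lipschitz continuity on $\Y$ (both averages lie in $\Y$) gives $h_i(\bar y_T)\geq h_i(\bar x_T)-G_i\|\bar y_T-\bar x_T\|$. Rearranging produces, for each $i$,
\begin{equation*}
    h_i(\bar x_T)\leq\frac{W_{i,T}+[h_i(y_T)]_+}{T}+\frac{G_i}{T}\sum_{t=1}^{T-1}\|y_{t+1}-y_t\|+G_i\|\bar y_T-\bar x_T\|.
\end{equation*}
Since $W_{i,T}\geq 0$ (e.g.\ \eqref{eq:lem2 w>0}), the right-hand side is nonnegative, so the inequality still holds with $h_i(\bar x_T)$ replaced by $[h_i(\bar x_T)]_+$.

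To conclude I would take $\ell_2$ norms in $i$. The last inequality is coordinatewise between nonnegative vectors, hence $\|[h(\bar x_T)]_+\|_2$ is at most the $\ell_2$ norm of the right-hand side, which the triangle inequality splits into three terms: $\tfrac{1}{T}\|W_T+[h(y_T)]_+\|_2\leq\tfrac{1}{T}(\|W_T\|_2+\|[h(y_T)]_+\|_2)$ for the first, and for the other two the factor $\sqrt{\sum_{i=1}^m G_i^2}\leq G$ (Assumption~\ref{assum:SFO}) pulls out, leaving exactly $\tfrac{G}{T}\sum_{t=1}^{T-1}\|y_{t+1}-y_t\|$ and $G\|\bar y_T-\bar x_T\|$. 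This matches the claimed bound. There is no serious difficulty here; the only care needed is the bookkeeping of the telescoped end-terms (discarding $W_{i,1}$, absorbing $h_i(y_T)$ into its positive part) and checking the sign needed to replace $h_i(\bar x_T)$ by $[h_i(\bar x_T)]_+$ before passing to norms.
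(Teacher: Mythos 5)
Your proposal is correct and follows essentially the same route as the paper's proof: keep the first branch of the $\max$ in Line~\ref{ln:W_update}, apply Cauchy--Schwarz and telescope, use $W_{i,1}\geq 0$, Jensen, and Lipschitz continuity to pass from the running average of $h_i(y_t)$ to $h_i(\bar x_T)$, then take $\ell_2$ norms coordinatewise with $\sqrt{\sum_i G_i^2}\leq G$. The only cosmetic differences are that you bound $\|g_{i,t}\|$ by $G_i$ immediately rather than at the norm-taking step, and you justify replacing $h_i(\bar x_T)$ by its positive part via nonnegativity of the right-hand side instead of the paper's $[a+b]_+\leq[a]_++[b]_+$ argument; both are valid.
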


\begin{proof} Fix $i\in\{1,\ldots,m\}$.
Line~\ref{ln:W_update} of the algorithm implies:
\begin{equation*}
    W_{i,t+1} \geq 
    W_{i,t}+h_i(y_t)
    +\inner{g_{i,t}}{y_{t+1}-y_{t}}
\end{equation*}
Using the Cauchy–Schwarz inequality we get:
\begin{equation*}
    W_{i,t+1} \geq 
    W_{i,t}+h_i(y_t)-\norm{g_{i,t}}\|y_{t+1}-y_{t}\|
\end{equation*}
Summing over $t\in\{1,\ldots,T-1\}$ gives:
\begin{equation*}
    W_{i,T} \geq 
    W_{i,1}+\sum_{t=1}^{T-1} h_i(y_t)
    -\sum_{t=1}^{T-1}\norm{g_{i,t}}\|y_{t+1}-y_{t}\|
\end{equation*}
Using the fact that $W_{i,1}\geq0$ from  Lemma~\ref{lem:results from Alg}, and adding $h_i(y_T)$ to both sides, we get:
\begin{equation*}
    \sum_{t=1}^{T} h_i(y_t)
    \leq
    W_{i,T}
    +h_i(y_T)
    +\sum_{t=1}^{T-1}\norm{g_{i,t}}\|y_{t+1}-y_{t}\|
\end{equation*}
Furthermore, by applying Jensen's inequality we get:
\begin{equation*}
    h_i(\Bar{y}_T)
    \leq
    \frac{1}{T}
    \left(W_{i,T} +h_i(y_T) 
    +\sum_{t=1}^{T-1}\norm{g_{i,t}}\|y_{t+1}-y_{t}\|
    \right)
\end{equation*}
Adding $h_i(\Bar{x}_T)-h_i(\Bar{y}_T)$ to both sides of the inequality we get
\begin{equation*}
    h_i(\Bar{x}_T)
    \leq
    \frac{1}{T}
    \left(W_{i,T} +h_i(y_T) 
    +\sum_{t=1}^{T-1}\norm{g_{i,t}}\|y_{t+1}-y_{t}\|
    \right)
    +h_i(\Bar{x}_T)-h_i(\Bar{y}_T)
\end{equation*}
The positive part function $[\cdot]_+$ is nondecreasing. Therefore:
\begin{equation*}
    [h_i(\Bar{x}_T)]_+
    \leq
    \left[\frac{1}{T}
    \left(W_{i,T} +h_i(y_T) 
    +\sum_{t=1}^{T-1}\norm{g_{i,t}}\|y_{t+1}-y_{t}\|
    \right)
    +h_i(\Bar{x}_T)-h_i(\Bar{y}_T)\right]_+
\end{equation*}
Lemma~\ref{lem:results from Alg}, states $W_{i,T}\geq0$. Using the general property that for any two nonnegative real numbers $a$ and $b$, $[a+b]_+ \leq [a]_+ + [b]_+$, we obtain:
\begin{equation}\label{eq:templemma6}
    [h_i(\Bar{x}_T)]_+
    \leq
    \frac{W_{i,T}+\left[h_i(y_T)\right]_+}{T}
    +\frac{1}{T}\sum_{t=1}^{T-1}\norm{g_{i,t}}\|y_{t+1}-y_{t}\|
    + \left[h_i(\Bar{x}_T)-h_i(\Bar{y}_T)\right]_+
\end{equation}

To continue the proof, consider the following inequality. Fix arbitrary vectors $a,b_1,\ldots,b_K\in\Real_+^{m}$. If vector $a$ is component-wise smaller than or equal to the vector $\sum_{k=1}^K b_k$, then we have $\|a\|_2\leq \norm{\sum_{k=1}^K b_k}_2$. Utilizing the triangle inequality this gives: $\|a\|_2\leq \sum_{k=1}^K\norm{ b_k}_2$. Thus, considering the \eqref{eq:templemma6} as an inequality for $i$-th element of vectors belonging to $\Real^m_+$, we can write:
\footnote{The vectors are as follows: 
\begin{itemize}
    \item 
    $\left([h_1(\Bar{x}_T)]_+,\ldots,[h_m(\Bar{x}_T)]_+\right)^\top$,
    \item 
    $\frac{1}{T}\left({W_{1,T}},\ldots,{W_{m,T}}\right)^\top$,
    \item 
    $\frac{1}{T}\left({\left[h_1(y_T)\right]_+},\ldots,{\left[h_m(y_T)\right]_+}\right)^\top$,
    \item 
    $\frac{1}{T}\left(\norm{g_{1,t}}\|y_{t+1}-y_{t}\|,\ldots,\norm{g_{m,t}}\|y_{t+1}-y_{t}\|\right)^\top$, for all $t\in\{1,\ldots,T-1\}$,
    \item 
    $\left(\left[h_1(\Bar{x}_T)-h_1(\Bar{y}_T)\right]_+,\ldots,\left[h_m(\Bar{x}_T)-h_m(\Bar{y}_T)\right]_+\right)^\top$. 
\end{itemize}
}

\begin{align*}
    \norm{[h(\Bar{x}_T)]_+}_2
    \leq&
    \frac{\norm{W_{T}}_2+\norm{\left[h_i(y_T)\right]_+}_2}{T}
    +\frac{1}{T}\sum_{t=1}^{T-1}\sqrt{\sum_{i=1}^m\norm{g_{i,t}}^2}\|y_{t+1}-y_{t}\|
    \\*&+ \norm{\left[h(\Bar{x}_T)-h(\Bar{y}_T)\right]_+}_2    
    \\\overset{\text{(a)}}{\leq}&
    \frac{\norm{W_{T}}_2+\norm{\left[h_i(y_T)\right]_+}_2}{T}
    +\frac{G}{T}\sum_{t=1}^{T-1}\|y_{t+1}-y_{t}\|
    + \norm{\left[h(\Bar{x}_T)-h(\Bar{y}_T)\right]_+}_2
    \\\overset{\text{(b)}}{\leq}&
    \frac{\norm{W_{T}}_2+\norm{\left[h_i(y_T)\right]_+}_2}{T}
    +\frac{G}{T}\sum_{t=1}^{T-1}\|y_{t+1}-y_{t}\|
    + \norm{h(\Bar{x}_T)-h(\Bar{y}_T)}_2
    \\\overset{\text{(c)}}{\leq}&
    \frac{\norm{W_{T}}_2+\norm{\left[h_i(y_T)\right]_+}_2}{T}
    +\frac{G}{T}\sum_{t=1}^{T-1}\|y_{t+1}-y_{t}\|
    +G \|\Bar{y}_T-\Bar{x}_T\|
\end{align*}
Here, (a) is by Assumption~\ref{assum:SFO}; the simple fact that for any $a\in\Real^m$, we have $\norm{[x]_+}_2\leq \norm{x}_2$ implies (b); and (c) in by Lemma~\ref{lem:h_lip}. 
\end{proof}

\begin{proof}[Proof of Theorem~\ref{thm:Constraints violation}]
The initial steps of this proof closely resemble those in the proof of Theorem~\ref{thm:First theorem}. Just as we did in that proof, we will denote the right-hand-side and left-hand-side of \eqref{eq:saved1} as $\textbf{RHS}_t$ and $\textbf{LHS}_t$, respectively. 
The previously derived Equation~\eqref{eq:savedforsecondproof} is demonstrated here:
\begin{multline*}\tag{Eq.\eqref{eq:savedforsecondproof} copied}
    \sum_{t=1}^{T-1}\E\left\{\textbf{LHS}_t\right\}
    \geq
    \frac{\eta}{2}\E\left\{
    \|Q_{T}\|^2
    -\|Q_{1}\|^2 \right\}
    +\frac{G^2\beta}{2}\sum_{t=1}^{T-1}\E\left\{\|y_{t+1}-y_t\|^2\right\}
    \\*
    -\sum_{t=1}^{T-1}\frac{\E\left\{\|s_t\|^2\right\}}{2\alpha}    
    +\frac{\beta}{2}\E\left\{
    \|W_{T}\|_2^2
    -\|W_1\|_2^2
    -\|\left[-h(y_{T})\right]_+\|_2^2
    +\left\|h(y_{1})\right\|_2^2
    \right\}
\end{multline*}
Lines \ref{ln:Q1} and \ref{ln:W_1} of the algorithm lead to the following implications, respectively:
\begin{align*}
    &
    Q_1=\textbf{0} ,
    \\*&
    \|W_1\|_2 = \|\left[-h(y_{1})\right]_+\|_2 \leq \|h(y_{1})\|_2 .
\end{align*}
Utilizing the inequalities mentioned above, we obtain:
\begin{align}\label{eq:LHS1_sec}
\begin{split}
    \sum_{t=1}^{T-1}\E\left\{\textbf{LHS}_t\right\}
    \geq&
    \frac{\eta}{2}\E\left\{
    \|Q_{T}\|^2 \right\}
    +\frac{G^2\beta}{2}\sum_{t=1}^{T-1}\E\left\{\|y_{t+1}-y_t\|^2\right\}
    \\*&
    -\sum_{t=1}^{T-1}\frac{\E\left\{\|s_t\|^2\right\}}{2\alpha}
    +\frac{\beta}{2}\E\left\{
    \|W_{T}\|_2^2
    -\|\left[-h(y_{T})\right]_+\|_2^2
    \right\}    
\end{split}
\end{align}
Note that, unlike what we did in \eqref{eq:LHS1}, we did not utilize the inequality $\|W_T\|_2\geq \|\left[-h(y_{T})\right]_+\|_2$ in \eqref{eq:LHS1_sec}.

For the $\textbf{RHS}_t$, we use the previously derived \eqref{eq:RHS1}, which is demonstrated below:
\begin{align*}
    \sum_{t=1}^{T-1}\E\left\{\textbf{RHS}_t\right\} 
    \leq&
    \sum_{t=1}^{T-1}\E\left\{\inner{s_t} {x^*-y_t}\right\} 
    -\frac{\alpha+2G^2\beta}{2} \E\left\{\|x^*-y_{T}\|^2\right\}
    \\*&
    +\frac{\alpha+2G^2\beta}{2} D^2
    +T\eta\frac{ D^2 + 2\delta}{2}
    .
    \tag{Eq.\eqref{eq:RHS1} copied}
\end{align*}
Using \eqref{eq:extra_term} in the above equation we get:
\begin{align}\label{eq:RHS1_sec}
\begin{split}
    \sum_{t=1}^{T-1}\E\left\{\textbf{RHS}_t\right\} 
    \leq&
    \sum_{t=1}^{T}\E\left\{\inner{s_t} {x^*-y_t}\right\} 
    -G^2\beta\E\left\{\|x^*-y_{T}\|^2\right\}
    \\*&
    +\frac{\alpha+2G^2\beta}{2} D^2
    +T\eta\frac{ D^2 + 2\delta}{2}
    +\frac{\E\left\{\|s_T\|^2\right\}}{2\alpha}
    .   
\end{split}
\end{align}

Consider the following derivation. Starting from Equation~\eqref{eq:iterated-gradient}, we obtain the following expression:
\begin{align}
\begin{split}\label{eq:temp3}
    \sum_{t=1}^T\E\left\{\inner{s_t}{x^*-y_t} \right\} 
    \leq& \sum_{t=1}^T \E\left\{f(x^*)-f(y_t)\right\} 
    \\\text{(by Lemma~\ref{lem:Lagrange})}\leq& \sum_{t=1}^T\E\left\{
    \mu^\top h(y_t)+\inner{\lambda}{x_t-y_t}
    \right\}
    \\\text{(by the definition of $\Bar{x}_T$, $\Bar{y}_T$)}=&
    \sum_{t=1}^T\E\left\{
    \mu^\top h(y_t)\right\}
    +T\inner{\lambda}{\E\left\{\Bar{x}_T-\Bar{y}_T\right\}}    
\end{split}
\end{align}
For any $i\in\{1,\ldots,m\}$, Line~\ref{ln:W_update} of the algorithm implies:
\begin{align}\label{eq:temp008}
    \begin{split}
        h_i(y_t) 
        \leq&
        W_{i,t+1}-W_{i,t}-\inner{g_{i,t}}{y_{t+1}-y_t}
        \\*\text{(by Cauchy-Schwarz)}\leq&
        W_{i,t+1}-W_{i,t} +\norm{g_{i,t}}\norm{y_{t+1}-y_t}
    \end{split}
\end{align}
Summing \eqref{eq:temp008} over $t$ in the range $t\in\{1,\ldots,T-1\}$ yields:
\begin{align}\label{eq:temp007}
\begin{split}
    \sum_{t=1}^{T-1} h_i(y_t)
    \leq&
    \sum_{t=1}^{T-1}\left(W_{i,t+1}-W_{i,t} +\norm{g_{i,t}}\norm{y_{t+1}-y_t}\right)
    \\*=&
    W_{i,T}-W_{i,1}+\sum_{t=1}^{T-1}\norm{g_{i,t}}\norm{y_{t+1}-y_t}
    \\*\text{($W_{i,1}\geq0$ by Lemma~\ref{lem:results from Alg})}\leq&
    W_{i,T}+\sum_{t=1}^{T-1}\norm{g_{i,t}}\norm{y_{t+1}-y_t}
\end{split}
\end{align}
Multiplying both sides of \eqref{eq:temp007} by $\mu_i\geq0$ and summing over $i$ yields:
\begin{align*}
    \sum_{i=1}^m\mu_i\sum_{t=1}^{T-1} h_i(y_t)
    \leq&
    \mu^\top W_{T}+\sum_{i=1}^m \mu_i\sum_{t=1}^{T-1}\norm{g_{i,t}}\norm{y_{t+1}-y_t}
    \\=&
    \mu^\top W_{T}+\sum_{t=1}^{T-1}\left(\sum_{i=1}^m \mu_i\norm{g_{i,t}}\right)\norm{y_{t+1}-y_t}
    \\\text{(by Cauchy-Schwarz)}\leq&
    \mu^\top W_{T} 
    +\sum_{t=1}^{T-1}
    \left(\|\mu\|_2 \sqrt{\sum_{i=1}^m\norm{g_{i,t}}^2}\right)
    \norm{y_{t+1}-y_t}
    \\\text{(by Assumption~\ref{assum:SFO})}\leq&
    \mu^\top W_{T}+G\|\mu\|_2 \sum_{t=1}^{T-1}\|y_{t+1}-y_t\|
\end{align*}
This gives us:
\begin{align*}
    \sum_{t=1}^{T}\mu^\top h(y_t)
    =&
    \mu^\top h(y_T)
    +\sum_{i=1}^m\mu_i\sum_{t=1}^{T-1} h_i(y_t)
    \\*\leq&
    \mu^\top h(y_T)
    +\mu^\top W_{T}+G\|\mu\|_2 \sum_{t=1}^{T-1}\|y_{t+1}-y_t\|
\end{align*}
Plugging the above inequality into \eqref{eq:temp3} results in:
\begin{align}
    \begin{split}
    \label{eq:RHS_sec_2}
    \sum_{t=1}^T\E\left\{\inner{s_t}{x^*-y_t} \right\} 
    \leq &
    \E\left\{\mu^\top h(y_T)+\mu^\top W_{T}\right\}
    \\*&+G \|\mu\|_2 \sum_{t=1}^{T-1}\E\left\{\|y_{t+1}-y_t\|\right\}
    +T\inner{\lambda}{\E\left\{\Bar{x}_T-\Bar{y}_T\right\}}
    \end{split}
\end{align}

Substituting Equations \eqref{eq:LHS1_sec}, \eqref{eq:RHS1_sec}, and \eqref{eq:RHS_sec_2} into \eqref{eq:saved1} and rearranging the terms, we obtain:
\begin{align}\label{eq:saved_sec_1}
\begin{split}
    &\frac{\eta}{2}\E\left\{
    \|Q_{T}\|^2 \right\}
    -T\inner{\lambda}{\E\left\{\Bar{x}_T-\Bar{y}_T\right\}}
    \\*&
    +\frac{G^2\beta}{2}\sum_{t=1}^{T-1}\E\left\{\|y_{t+1}-y_t\|^2\right\}
    -G \|\mu\|_2 \sum_{t=1}^{T-1}\E\left\{\|y_{t+1}-y_t\|\right\}
    \\*&
    +\frac{\beta}{2}\E\left\{\|W_{T}\|_2^2-\|\left[-h(y_{T})\right]_+\|_2^2\right\}
    -\E\left\{\mu^\top h(y_T)+\mu^\top W_{T}\right\}
    \\\leq&
    \sum_{t=1}^{T}\frac{\E\left\{\|s_t\|^2\right\}}{2\alpha}
    -G^2\beta\E\left\{\|x^*-y_{T}\|^2\right\}
    +\frac{\alpha+2G^2\beta}{2} D^2
    +T\eta\frac{ D^2 + 2\delta}{2}
\end{split}
\end{align}

Next, we simplify the following two terms from the above equation: $\frac{\beta}{2}\E\left\{-\|\left[-h(y_{T})\right]_+\|_2^2\right\}$ from the left-hand side and $-G^2\beta\E\left\{\|x^*-y_{T}\|^2\right\}$ from the right-hand side. Using the Lipschitz continuity of $h$ (see Lemma~\ref{lem:h_lip}), we get:
\begin{equation*}
    \norm{h(y_T)-h(x^*)}_2\leq G\norm{x^*-y_T}
\end{equation*}
By using reverse triangle inequality we get
\begin{equation*}
    \norm{h(y_T)}_2\leq \norm{h(x^*)}_2 + G\norm{x^*-y_T}
\end{equation*}
Using the simple fact that $(a+b)^2\leq 2a^2+2 b^2$ we get:
\begin{equation*}
    \norm{h(y_T)}_2^2\leq 2\norm{h(x^*)}_2^2 + 2G^2\norm{x^*-y_T}^2
\end{equation*}
Using the fact that for any arbitrary vector $v \in \mathbb{R}^m$, the inequality $\|v\|_2^2 = \|\left[-v\right]_+\|_2^2 + \|\left[v\right]_+\|_2^2$ holds, we can write:
\begin{equation}\label{eq:temp4}
    \|\left[-h(y_{T})\right]_+\|_2^2 
    \leq 
    2\|h(x^*)\|^2 +2G^2\|x^*-y_T\|^2
    - \|\left[h(y_{T})\right]_+\|_2^2
\end{equation}

Utilizing Equation~\eqref{eq:temp4} within \eqref{eq:saved_sec_1} and further simplifying by applying $Q_T=T(\Bar{y}_T-\Bar{x}_T)$, and the inequality $\E\{\|s_t\|^2\}\leq L^2$, we obtain:
\begin{align}
&(\textbf{LHS}_a\coloneqq)&&\frac{T^2\eta}{2}\E\left\{
    \|\Bar{y}_T-\Bar{x}_T\|^2 \right\}
    -T\inner{\lambda}{\E\left\{\Bar{x}_T-\Bar{y}_T\right\}}
    \notag
    \\*
&(\textbf{LHS}_b\coloneqq)&&
    +\frac{G^2\beta}{2}\sum_{t=1}^{T-1}\E\left\{\|y_{t+1}-y_t\|^2\right\}
    -G \|\mu\|_2 \sum_{t=1}^{T-1}\E\left\{\|y_{t+1}-y_t\|\right\}
    \notag
    \\*
&(\textbf{LHS}_c\coloneqq)&&
    +\frac{\beta}{2}\E\left\{\|W_{T}\|_2^2 + \|\left[h(y_{T})\right]_+\|_2^2\right\}
    -\E\left\{\mu^\top h(y_T)+\mu^\top W_{T}\right\}
    \notag
    \\
&&\leq&
    \beta\|h(x^*)\|_2^2
    +\frac{T L^2}{2\alpha}
    +\frac{\alpha+2G^2\beta}{2} D^2
    +T\eta\frac{ D^2 + 2\delta}{2}
    \label{eq:saved_sec_2}
\end{align}
Here, the left-hand-side is divided into three terms, each of which is simplified as follows:
\begin{align*}
    \textbf{LHS}_a
    =&
    \frac{T^2\eta}{2}\E\left\{
    \|\Bar{y}_T-\Bar{x}_T\|^2 \right\}
    -T\inner{\lambda}{\E\left\{\Bar{x}_T-\Bar{y}_T\right\}}
    \\
    \overset{\text{(a)}}{\geq}&
    \frac{T^2\eta}{2}\left(\E\left\{
    \|\Bar{y}_T-\Bar{x}_T\| \right\}\right)^2
    -T\|\lambda\|\E\left\{\|\Bar{y}_T-\Bar{x}_T\|\right\}
    \\\overset{\text{(b)}}{=}&
    \frac{1}{2}\left(T\sqrt{\eta}\E\left\{\|\Bar{y}_T-\Bar{x}_T\|\right\}-\frac{1}{\sqrt{\eta}}\|\lambda\|\right)^2
    - \frac{\|\lambda\|^2}{2\eta}
\end{align*}
where (a) follows from the Cauchy–Schwarz and Jensen's inequalities, and (b) is obtained by completing the square.
\begin{align*}
    \textbf{LHS}_b
    =&
    \frac{G^2\beta}{2}\sum_{t=1}^{T-1}\E\left\{\|y_{t+1}-y_t\|^2\right\}
    -G \|\mu\|_2 \sum_{t=1}^{T-1}\E\left\{\|y_{t+1}-y_t\|\right\}
    \\
    \overset{\text{(c)}}{\geq}&
    \frac{G^2\beta}{2}(T-1)\left(\frac{1}{T-1}\sum_{t=1}^{T-1}\E\left\{\|y_{t+1}-y_t\|\right\}\right)^2
    \\&
    -G \|\mu\|_2 \sum_{t=1}^{T-1}\E\left\{\|y_{t+1}-y_t\|\right\}
    \\\overset{\text{(d)}}{=}&
    \frac{1}{2}\left(\sqrt{\frac{G^2\beta}{T-1}}
    \sum_{t=1}^{T-1}
    \E\left\{\|y_{t+1}-y_t\|\right\} 
    - \sqrt{\frac{T-1}{\beta}}\|\mu\|_2
    \right)^2
    - {\frac{T-1}{2\beta}}\|\mu\|_2^2
\end{align*}
here, (c) is justified by Jensen's inequality, and (d) is obtained by completing the square.
\begin{align*}
    \textbf{LHS}_c
    =&
    \frac{\beta}{2}\E\left\{\|W_{T}\|_2^2 + \|\left[h(y_{T})\right]_+\|_2^2\right\}
    -\E\left\{\mu^\top h(y_T)+\mu^\top W_{T}\right\}
    \\
    \overset{\text{(e)}}{\geq}&
    \frac{\beta}{4}\left(
    \E\left\{\|W_{T}\|_2 + \|\left[h(y_{T})\right]_+\|_2\right\}
    \right)^2
    -\E\left\{\mu^\top h(y_T)+\mu^\top W_{T}\right\}
    \\\overset{\text{(f)}}{\geq}&
    \frac{\beta}{4}\left(
    \E\left\{\|W_{T}\|_2 + \|\left[h(y_{T})\right]_+\|_2\right\}
    \right)^2
    -\|\mu\|_2\E\left\{\|\left[h(y_{T})\right]_+\|_2+\|W_{T}\|_2\right\}
    \\\overset{\text{(g)}}{=}&
    \left(
    \frac{\sqrt{\beta}}{2}\E\left\{\|W_{T}\|_2 + \|\left[h(y_{T})\right]_+\|_2\right\}
    - \frac{\|\mu\|_2}{\sqrt{\beta}}
    \right)^2
    - \frac{\|\mu\|^2_2}{\beta}  
\end{align*}
where (e) holds by Jensen's inequality; (f) holds because of the the Cauchy–Schwarz inequality and the fact that $\mu^\top h(y_T) \leq \mu^\top [h(y_T)]_+$; and (g) is by completing the square.

By plugging these three inequalities back into Equation \eqref{eq:saved_sec_2}, we get:
\begin{align}
    &\frac{1}{2}\left(T\sqrt{\eta}\E\left\{\|\Bar{y}_T-\Bar{x}_T\|\right\}-\frac{1}{\sqrt{\eta}}\|\lambda\|\right)^2
    \notag
    \\&
    +\frac{1}{2}\left(\sqrt{\frac{G^2\beta}{T-1}}
    \sum_{t=1}^{T-1}
    \E\left\{\|y_{t+1}-y_t\|\right\} 
    - \sqrt{\frac{T-1}{\beta}}\|\mu\|_2
    \right)^2
    \notag
    \\&
    +\left(
    \frac{\sqrt{\beta}}{2}\E\left\{\|W_{T}\|_2 + \|\left[h(y_{T})\right]_+\|_2\right\}
    - \frac{\|\mu\|_2}{\sqrt{\beta}}
    \right)^2
    \notag
    \\\leq&
    \underbrace{\beta\|h(x^*)\|_2^2
    +G^2 D^2\beta 
    +{\frac{T+1}{2\beta}}\|\mu\|_2^2 
    +\frac{T L^2}{2\alpha}
    +\frac{\alpha D^2}{2} 
    +T\eta\frac{ D^2 + 2\delta}{2}
    +\frac{\|\lambda\|^2}{2\eta}}_{\Gamma_T}
    \label{eq:Gamma}
\end{align}
The equation above can be employed to individually bound each of the three left-hand-side terms with respect to the newly defined parameter $\Gamma_T$. This implies:
\begin{align*}
    \frac{1}{2}\left(T\sqrt{\eta}\E\left\{\|\Bar{y}_T-\Bar{x}_T\|\right\}-\frac{1}{\sqrt{\eta}}\|\lambda\|\right)^2
    \leq& \Gamma_T
    \\
    \frac{1}{2}\left(\sqrt{\frac{G^2\beta}{T-1}}
    \sum_{t=1}^{T-1}
    \E\left\{\|y_{t+1}-y_t\|\right\} 
    - \sqrt{\frac{T-1}{\beta}}\|\mu\|_2
    \right)^2
    \leq& \Gamma_T
    \\
    \left(
    \frac{\sqrt{\beta}}{2}\E\left\{\|W_{T}\|_2 + \|\left[h(y_{T})\right]_+\|_2\right\}
    - \frac{\|\mu\|_2}{\sqrt{\beta}}
    \right)^2
    \leq& \Gamma_T
\end{align*}
Which become
\begin{align}\label{eq:3bounds}
\begin{split}
    \E\left\{\|\Bar{y}_T-\Bar{x}_T\|\right\}
    \leq& 
    \sqrt{\frac{2}{T^2\eta}\Gamma_T}
    +\frac{\|\lambda\|}{T\eta}
    \\
    \sum_{t=1}^{T-1}
    \E\left\{\|y_{t+1}-y_t\|\right\}
    \leq& 
    \sqrt{\frac{2T}{G^2\beta}\Gamma_T}
    + \frac{T\|\mu\|_2}{G\beta}
    \\
    \E\left\{\|W_{T}\|_2 + \|\left[h(y_{T})\right]_+\|_2\right\}
    \leq& 
    \sqrt{\frac{4}{\beta}\Gamma_T} + \frac{2\|\mu\|_2}{\beta}  
\end{split}
\end{align}

Substituting \eqref{eq:3bounds} in Lemma~\ref{lem:[h(Bar x)]+} yields:
\begin{align}\label{eq:coiunstr_NOT_simpl}
\begin{split}
    \E\left\{\|\left[h(\Bar{x}_T)\right]_+\|_2\right\}
    \leq&
    \frac{G}{T} \left(
    \sqrt{\frac{2 T} {G^2\beta}\Gamma_T } 
    +\frac{T\|\mu\|_2 }{{ G\beta}}
    \right)
    \\*&
    +\frac{1}{T}\left(
    \sqrt{\frac{4}{\beta} \Gamma_T } 
    +\frac{2\|\mu\|_2}{\beta}
    \right)
    \\*&
    + G\left(
    \sqrt{\frac{2}{T^2 \eta} \Gamma_T } 
    +\frac{\|\lambda\|}{T\eta}
    \right)
    \\=&
    \sqrt{\frac{\Gamma_T}{T}}\left(
    \sqrt{\frac{2} {\beta}}
    +\sqrt{\frac{4 }{T\beta}}
    +\sqrt{\frac{2 G^2}{T\eta}}
    \right)
    \\*&
    +\frac{\|\mu\|_2}
    {\beta}
    +\frac{2 \|\mu\|_2}{T\beta}
    +\frac{G \|\lambda\|}{T \eta}
    ,        
\end{split}
\end{align}

\textbf{Parameter Selection 1 \eqref{eq:selection1}}:
By substituting $\eta = \epsilon, \alpha = \beta = 1/\epsilon,$ and $T \geq 1/\epsilon^2$ into \eqref{eq:coiunstr_NOT_simpl} and utilizing the $\Gamma_T$ defined in \eqref{eq:Gamma}, we obtain:
\begin{equation*}
\frac{\Gamma_T}{T}  
\leq
\mathcal{O}\left(\epsilon\right) 
,
\end{equation*}
and thus
\begin{equation*}
    \E\left\{\|\left[h(\Bar{x}_T)\right]_+\|_2\right\}
    \leq\mathcal{O}\left(\epsilon\right)
    .
\end{equation*}

\textbf{Parameter Selection 2 \eqref{eq:selection2}}: To proceed, we must first simplify \eqref{eq:coiunstr_NOT_simpl} further.
\begin{align}
\begin{split}
    \label{eq:coiunstr_simpl}
    \E\left\{\|\left[h(\Bar{x}_T)\right]_+\|_2\right\}
    \overset{\text{(a)}}{\leq}&
    \sqrt{\frac{\Gamma_T}{T}}\left(
    \left(1+\sqrt{2}\right)\sqrt{\frac{2} {\beta}}
    +\sqrt{\frac{2 G^2}{T\eta}}
    \right)
    +\frac{3\|\mu\|_2}{\beta}
    +\frac{G \|\lambda\|}{T \eta}
    \\=&
    \sqrt{\frac{\Gamma_T}{T}\left(1+\sqrt{2}\right)^2\frac{2} {\beta}}
    +\sqrt{\frac{\Gamma_T}{T}\frac{2 G^2}{T\eta}}
    +\sqrt{\frac{9\|\mu\|_2^2}{\beta^2}}
    +\sqrt{\frac{G^2 \|\lambda\|^2}{T^2 \eta^2}}
    \\\overset{\text{(b)}}{\leq}&
    \sqrt{
    \frac{\Gamma_T}{T} 
    \left(
    \frac{47} {\beta}
    +\frac{8 G^2}{T\eta}
    \right)
    +\frac{36\|\mu\|_2^2}{\beta^2}
    +\frac{4 G^2 \|\lambda\|^2}{T^2 \eta^2}
    }
    \\\overset{\text{(c)}}{\leq}&
    \sqrt{
    \frac{\Gamma_T}{T} 
    \left(
    \frac{47} {\beta}
    +\frac{8 G^2}{T\eta}
    \right)
    +\frac{36\|\mu\|_2^2}{\beta^2}
    +\frac{4 G^2 \left(L+G\|\mu\|_2\right)^2}{T^2 \eta^2}
    }
\end{split}
\end{align}
here, for (a), we exploit the fact that $T\geq1$; for (b), we apply Jensen's inequality to the concave square root function; and finally, (c) follows from \eqref{eq:lambda<}.

Simplifying the term $\sqrt{\frac{\Gamma_T}{T}}$ using the definition from \eqref{eq:Gamma} results in:
\begin{align}\label{eq:Gamma2}
\begin{split}
    \frac{\Gamma_T}{T}
    =&
    \frac{\beta\|h(x^*)\|_2^2}{T}
    + \frac{G^2 D^2\beta }{T}
    +{\frac{T+1}{2\beta T}}\|\mu\|_2^2 
    +\frac{ L^2}{2\alpha}
    +\frac{\alpha D^2}{2T} 
    +\eta\frac{ D^2 + 2\delta}{2}
    +\frac{\|\lambda\|^2}{2T\eta}   
    \\\overset{\text{(a)}}{\leq}&
    \frac{\beta\|h(x^*)\|_2^2}{T}
    + \frac{G^2 D^2\beta }{T}
    +{\frac{\|\mu\|_2^2 }{\beta}}
    +\frac{ L^2}{2\alpha}
    +\frac{\alpha D^2}{2T} 
    +\eta\frac{ D^2 + 2\delta}{2}
    +\frac{\|\lambda\|^2}{2T\eta}   
    \\\overset{\text{(b)}}{\leq}&
    \frac{\beta\|h(x^*)\|_2^2}{T}
    + \frac{G^2 D^2\beta }{T}
    +{\frac{\|\mu\|_2^2 }{\beta}}
    +\frac{ L^2}{2\alpha}
    +\frac{\alpha D^2}{2T} 
    +\eta\frac{ D^2 + 2\delta}{2}
    +\frac{\left(L+G\|\mu\|_2\right)^2}{2T\eta}   
\end{split}
\end{align}
where (a) uses the fact that $T\geq1$, and (b) is satisfied based on the inequality \eqref{eq:lambda<}.
Replacing $\alpha = \frac{L\sqrt{T}}{D}$, $\eta = \frac{L}{\sqrt{T\left(D^2+2\delta\right)}}$, and $ \beta = \frac{\sqrt{T}}{GD}$ in \eqref{eq:Gamma2} and \eqref{eq:coiunstr_simpl} we get:
\begin{align*}
    \frac{\Gamma_T}{T}
    \leq&
    \frac{1}{\sqrt{T}}
    \left(
    LD
    +
    L\sqrt{D^2+2\delta}
    +
    GD
    +
    \frac{\|h(x^*)\|_2^2}{GD}
    \right.
    \\*&+
    \left.
    \|\mu\|_2
    G\sqrt{D^2+2\delta}
    +\|\mu\|_2^2
    \left(
    GD+\frac{G^2}{2L}\sqrt{D^2+2\delta}
    \right)
    \right)
    . 
\end{align*}
and thus
\begin{equation*}
    \E\left\{\|\left[h(\Bar{x}_T)\right]_+\|_2\right\}
    \leq
    \frac{1}{\sqrt{T}} \sqrt{A_0+A_1\|\mu\|_2 + A_2\|\mu\|_2^2}
\end{equation*}
where $A_0$, $A_1$ and $A_2$ are defined as follows:
\begin{align*}
    A_2 \coloneqq&
    55\frac{G^3}{L}D\sqrt{D^2+2\delta}
    +83G^2D^2
    +8\frac{G^4}{L^2}(D^2+2\delta)  
    \\
    A_1 \coloneqq&
    16\frac{G^3}{L}(D^2+2\delta)
    +47G^2D\sqrt{D^2+2\delta}
    \\
    A_0 \coloneqq&
    47 G L D^2
    +47 G L D\sqrt{D^2+2\delta}
    +47 G^2D^2
    +12G^2(D^2+2\delta)
    \\&+8G^2D\sqrt{D^2+2\delta}
    +8\frac{G^3}{L}D\sqrt{D^2+2\delta}
    +\norm{h(x^*)}_2^2\left(
    47+8\frac{G}{L}\frac{\sqrt{D^2+2\delta}}{D}
    \right)
\end{align*}
\end{proof}

\begin{remark}
If the functional constraints satisfy the assumption that for all $i \in \{1, \ldots, m\}$ there exists a point $z_i \in \X$ such that $h_i(z_i) \geq 0$ (meaning that none of the functional inequalities are strictly satisfied everywhere on the set $\X$), then we can write:
\begin{equation*}
        \|h(x^*)\|_2\leq GD
        .
    \end{equation*}
\end{remark}

\begin{proof}
    Using Lemma~\ref{lem:h_lip} we have:
    \begin{equation*}
        h_i(z) - h_i(x^*) \leq G_i \|z_i - x^*\|.
    \end{equation*}
    Assumption~\ref{assum:X_bounded} implies $\|z_i - x^*\| \leq D$, thus
    \begin{equation*}
        h_i(z) - h_i(x^*) \leq G_i D.
    \end{equation*}
    Using the fact that $h_i(x^*) \leq 0$ and $h_i(z_i) \geq 0$, we get
    \begin{equation*}
        (h(x^*))^2 \leq G_i^2 D^2.
    \end{equation*}
    Finally, summing over $i=\{1,\ldots,m\}$ gives
    \begin{equation*}
        \norm{h(x^*)}_2^2\leq \sum_{i=1}^m G_i^2 D^2 \leq G^2 D^2
    \end{equation*}
    where we used Assumption~\ref{assum:SFO} in the last step.
\end{proof}

\subsection{Upper Bound for \texorpdfstring{$\|Q_t\|$}{||Q{\tiny t}||}}
Here, we prove an upper bound for \(\|Q_t\|\) as a secondary result of Equation \eqref{eq:3bounds}. Although this bound does not explicitly appear in our guarantees, it is useful when using an inexact LMO. For a fixed parameter \(\delta\), the computational complexity of Line~\ref{ln:x update} of the algorithm, \(\textsc{In-LMO}_\mathcal{X}\{-Q_t;\delta\}\), depends on the size of \(Q_t\). 
\begin{theorem} [Upper Bound for $\|Q_t\|$] 
\label{thm:Q-theorem} 
Given Assumptions~\ref{assum:X_bounded} to \ref{assum:x0}, for Algorithm~\ref{alg:functional}, with any \( T \in \{1, 2, 3, \ldots\} \), \(\eta > 0\), \(\alpha > 0\), \(\beta > 0\), and \(\delta \geq 0\), the expected value of \(\|Q_t\|\) for any \( t \in \{1, 2, 3, \ldots\}\) is bounded as follows:
\begin{multline}\label{eq:general-bound-Q}
    \mathbb{E}\left\{\|Q_t\|\right\} 
    \leq
    \sqrt{t}
    \sqrt{
    \frac{2\|\mu\|_2^2}{\beta\eta}
    +\frac{L^2}{\alpha\eta}
    +D^2+2\delta}
    \\*
    +\sqrt{
    \frac{2\beta}{\eta}\|h(x^*)\|_2^2
    +\frac{2G^2D^2\beta}{\eta}
    +\frac{\alpha D^2}{\eta}
    +\frac{(L + G\|\mu\|_2)^2}{\eta^2}
    }
    +\frac{\|\lambda\|}{\eta}  . 
\end{multline}
In particular, under Parameter Selection \eqref{eq:selection1} we have 
\begin{equation}\label{eq:par1-Q}
    \mathbb{E}\left\{\|Q_t\|\right\} \leq \mathcal{O}\left( \sqrt{t} + \frac{1}{\epsilon} \right),
\end{equation}
while under Parameter Selection \eqref{eq:selection2} we have
\begin{equation}\label{eq:par2-Q}
    \E\left\{\|Q_t\|\right\}
    \leq
    B_1 \sqrt{t}
    +
    B_2 \sqrt{T}.
\end{equation}
Here, \( B_1 \) and \( B_2 \) are constants that depend on the problem's parameters, and they are defined in the final part of the theorem's proof.
\end{theorem}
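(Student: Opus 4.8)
The plan is to reuse, essentially verbatim, the chain of inequalities developed in the proof of Theorem~\ref{thm:Constraints violation}, together with the observation that nothing in that argument forces the summation endpoint to be exactly $T$. Concretely, every display from \eqref{eq:saved1} through \eqref{eq:3bounds} is obtained by summing a single per-iteration inequality over a range $\{1,\dots,T-1\}$ and combining it with quantities evaluated at the terminal index; re-running the identical steps with the terminal index taken to be an arbitrary $t\in\{1,\dots,T\}$ — the parameters $\alpha,\beta,\eta,\delta$ are fixed constants and are untouched — yields the analogue of the first line of \eqref{eq:3bounds},
\[
\E\left\{\norm{\Bar y_t-\Bar x_t}\right\}\le \sqrt{\tfrac{2}{t^2\eta}\,\Gamma_t}+\tfrac{\norm{\lambda}}{t\eta},
\]
where $\Gamma_t$ denotes the expression in \eqref{eq:Gamma} with every explicit occurrence of $T$ replaced by $t$. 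Since \eqref{eq:Q_T=xbar-ybar} likewise gives $Q_t=t(\Bar y_t-\Bar x_t)$, multiplying the previous inequality by $t$ produces $\E\{\norm{Q_t}\}\le\sqrt{2\Gamma_t/\eta}+\norm{\lambda}/\eta$.

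The remainder is bookkeeping. I would expand $\Gamma_t$ via \eqref{eq:Gamma}, split the seven resulting terms into the three that carry a factor of $t$ — bounding $t+1\le 2t$ since $t\ge 1$, so that the $t$-dependent part is at most $t\big(\tfrac{2\norm{\mu}_2^2}{\beta\eta}+\tfrac{L^2}{\alpha\eta}+D^2+2\delta\big)$ — and the four that do not. Applying $\sqrt{a+b}\le\sqrt a+\sqrt b$ separates $\sqrt{2\Gamma_t/\eta}$ into the advertised $\sqrt t$ term plus a $t$-independent radical, and a final use of \eqref{eq:lambda<} to replace $\norm{\lambda}^2$ by $(L+G\norm{\mu}_2)^2$ inside that radical gives exactly \eqref{eq:general-bound-Q}.

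For the two regimes I would substitute and read off orders. Under \eqref{eq:selection1} the coefficient of $\sqrt t$ is $O(1)$, the $t$-independent radical equals $\sqrt{O(\epsilon^{-2})}=O(\epsilon^{-1})$, and $\norm{\lambda}/\eta=O(\epsilon^{-1})$ by \eqref{eq:lambda<}, which is \eqref{eq:par1-Q}. Under \eqref{eq:selection2} one checks that $1/(\beta\eta)$, $1/(\alpha\eta)$, and $D^2+2\delta$ are all $T$-independent, so the $\sqrt t$-coefficient is a constant $B_1$; meanwhile every term inside the second radical is $O(T)$ and $\norm{\lambda}/\eta=O(\sqrt T)$, so those two contributions collapse to $B_2\sqrt T$, and writing $B_1,B_2$ out explicitly in terms of $D,L,G,\delta,\norm{\mu}_2,\norm{h(x^*)}_2$ yields \eqref{eq:par2-Q}.

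The only genuinely non-mechanical point — and the one I would flag carefully in the write-up — is the first step: one must verify that the derivation leading to \eqref{eq:3bounds} uses the terminal time only through the generic index, so it may legitimately be instantiated at any $t\le T$ without altering the parameters. Once that is accepted there is no further obstacle; the ``hard'' part is merely keeping the constants straight when unpacking $\Gamma_t$ and taking square roots in the two parameter selections.
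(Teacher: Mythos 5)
Your proposal is correct and follows essentially the same route as the paper: the paper's proof likewise observes that the derivation of \eqref{eq:3bounds} and \eqref{eq:Gamma2} never uses the terminal index as anything but a generic integer, instantiates the first bound of \eqref{eq:3bounds} at an arbitrary $t$, multiplies through by $t$ via $Q_t = t(\bar y_t - \bar x_t)$, and then splits $\Gamma_t$ into its $t$-dependent and $t$-independent parts using subadditivity of the square root before substituting the two parameter selections. The bookkeeping you describe (bounding $t+1\le 2t$, invoking \eqref{eq:lambda<} for $\|\lambda\|$) matches the paper's constants exactly.
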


\begin{proof}
Consider Equations \eqref{eq:3bounds} and \eqref{eq:Gamma2}. Throughout their derivation, no assumptions were made about \( T \), meaning it can be replaced with any positive integer, regardless of the number of iterations fixed in the algorithm. Thus, for all \( t \in \{1, 2, \ldots\} \), the first part of \eqref{eq:3bounds} gives:
\begin{align*}
    &\notag
    \frac{1}{t}\mathbb{E}\left\{\|Q_t\|\right\} =
    \mathbb{E}\left\{\|\Bar{y}_t - \Bar{x}_t\|\right\} \leq
    \sqrt{\frac{2}{t^2\eta}\Gamma_t} + \frac{\|\lambda\|}{t\eta}
    \\*\implies\quad&
    \mathbb{E}\left\{\|Q_t\|\right\} \leq
    \sqrt{\frac{2\Gamma_t}{\eta}} + \frac{\|\lambda\|}{\eta}
\end{align*}
Replacing \( T \) in \eqref{eq:Gamma2} with \( t \), we have:
\begin{align*}
    \Gamma_t \leq
    \beta\|h(x^*)\|_2^2 +
    G^2 D^2 \beta +
    t \frac{\|\mu\|_2^2}{\beta} +
    t \frac{L^2}{2\alpha} +
    \frac{\alpha D^2}{2} +
    \eta t \frac{D^2 + 2\delta}{2} +
    \frac{(L + G\|\mu\|_2)^2}{2\eta}
\end{align*}
Combining the above equations and using Jensen's inequality gives \eqref{eq:general-bound-Q}.

\textbf{Parameter Selection 1 \eqref{eq:selection1}:}
By substituting \(\eta = \epsilon\), \(\alpha = \beta = 1/\epsilon\) into \eqref{eq:general-bound-Q}, we obtain \eqref{eq:par1-Q}.

\textbf{Parameter Selection 2 \eqref{eq:selection2}:}
Plugging the values \(\alpha = \frac{L \sqrt{T}}{D}\), \(\eta = \frac{L}{\sqrt{T (D^2 + 2\delta)}}\), and \(\beta = \frac{\sqrt{T}}{G D}\) in \eqref{eq:general-bound-Q} gives \eqref{eq:par2-Q}, where \( B_1 \) and \( B_2 \) are defined as follows:
\begin{align*}
    B_1
    \coloneqq&
    \sqrt{
    \frac{D}{L}\left(L+2G\|\mu\|_2^2\right)
    \sqrt{D^2+2\delta}
    +D^2+2\delta}
    \\
    B_2
    \coloneqq&
    \frac{\|\lambda\|}{L}\sqrt{D^2+2\delta}
    \\&\notag
    +\sqrt{
    \sqrt{D^2+2\delta}
    \left(
    {\frac{2}{GLD}\|h(x^*)\|_2^2}
    +{\frac{2GD}{L}}
    +D
    \right)    
    +\frac{\left(L+G\|\mu\|_2\right)^2\left(D^2+2\delta\right)}{L^2}  
    }
\end{align*}

    
\end{proof}

\section{Nuclear Norm}\label{apx: nuck}
The bounded nuclear norm domain is a well-known example where inexact linear minimization holds a significant computational advantage over projection \cite{ pmlr-v28-jaggi13, combettes2021complexity}. In this section, we provide an overview of some key results regarding this norm and its characteristics.

\begin{definition} Singular Value Decomposition
     (SVD) of a real valued $q\times p$ matrix $c$ is a factorization of the form $c=u\sigma v^\top$, where $u$ is an $q\times q$ orthogonal matrix, $\sigma$ is an diagonal matrix with nonnegative real numbers on the diagonal, and $v$ is an $p\times p$ orthogonal matrix. The diagonal entries $\sigma_i\coloneqq\sigma_{i,i}$ are uniquely determined by $c$ and are known as the singular values of $c$. Let the \(i\)-th column of matrix \(u\) be denoted as \(u_i\), and the \(i\)-th column of matrix \(v\) be denoted as \(v_i\).
\end{definition}

\begin{definition} Nuclear norm or trace norm of a $q\times p$ matrix $c$ with SVD of form $c=u\sigma v^\top$, is defined as:
\begin{equation*}
    \|c\|_*=\sum_{i=1}^{\min\{q,p\}} \sigma_i
\end{equation*}
\end{definition}
Similar to the definition in Problem~\eqref{eq:R4NR}, for a fixed \(\gamma > 0\), let the set \(\C_\gamma\) denote all real-valued \(q \times p\) matrices with a nuclear norm smaller than \(\gamma\).

\begin{lemma}
    The nuclear norm of a matrix can be lower bounded by its norm:
    \begin{equation*}
        \|c\|_*\geq\|c\| \coloneqq \operatorname{Tr}(c^\top c)
    \end{equation*}
\end{lemma}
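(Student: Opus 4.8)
The plan is to pass to the singular value decomposition $c = u\sigma v^\top$ and express both norms in terms of the singular values $\sigma_1,\ldots,\sigma_{\min\{q,p\}}$. First I would observe that, since $u$ and $v$ are orthogonal, $c^\top c = v\,\sigma^\top u^\top u\,\sigma\, v^\top = v(\sigma^\top\sigma)v^\top$, so that by the cyclic property of the trace $\operatorname{Tr}(c^\top c) = \operatorname{Tr}(\sigma^\top\sigma) = \sum_{i} \sigma_i^2$. Hence the norm induced by the inner product $\inner{v}{u} = \operatorname{Tr}(v^\top u)$ on $\Real^{q\times p}$ is $\norm{c} = \sqrt{\operatorname{Tr}(c^\top c)} = \sqrt{\sum_i \sigma_i^2}$, i.e.\ the $\ell_2$-norm of the singular-value vector, while by definition $\norm{c}_* = \sum_i \sigma_i$ is its $\ell_1$-norm.

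It then remains to invoke the elementary fact that the $\ell_1$-norm dominates the $\ell_2$-norm on $\Real^n$. Concretely, since each $\sigma_i \geq 0$,
\begin{equation*}
\left(\sum_i \sigma_i\right)^2 = \sum_i \sigma_i^2 + \sum_{i\neq j}\sigma_i\sigma_j \;\geq\; \sum_i \sigma_i^2,
\end{equation*}
because every cross term $\sigma_i\sigma_j$ is nonnegative. Taking square roots yields $\norm{c}_* = \sum_i \sigma_i \geq \sqrt{\sum_i \sigma_i^2} = \norm{c}$, which is the desired bound.

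There is essentially no hard step here; the only points requiring care are the bookkeeping for rectangular matrices (so one writes $\sigma^\top\sigma$ rather than $\sigma^2$ and the sum runs to $\min\{q,p\}$) and aligning the statement with the paper's convention that $\norm{\cdot}$ on $\Real^{q\times p}$ is the Frobenius norm $\sqrt{\operatorname{Tr}(c^\top c)}$ — so the displayed definition in the lemma is to be read with the square root.
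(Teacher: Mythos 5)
Your proof is correct and complete; the paper itself does not supply an argument here, deferring instead to a citation of Golub and Van Loan, so your write-up is strictly more informative than what appears in the text. The route you take — reducing both quantities to the singular-value vector via the SVD and the cyclic property of the trace, then invoking $\ell_1 \geq \ell_2$ for nonnegative vectors by expanding the square — is the standard elementary argument and is exactly what the cited reference would give. You are also right to flag the convention issue: as displayed, the lemma defines $\|c\|$ as $\operatorname{Tr}(c^\top c)$ without a square root, which contradicts the paper's own notation section ($\|\cdot\| = \sqrt{\inner{\cdot}{\cdot}}$ with $\inner{v}{u} = \operatorname{Tr}(v^\top u)$) and would make the claimed inequality false for matrices with large Frobenius norm; reading the definition with the square root, as you do, is the only interpretation consistent with the rest of the paper (e.g.\ the diameter bound $\|c_t - c^*\| \leq 2\gamma$ derived from this lemma). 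No gaps.
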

\begin{proof}
See \cite{golub2013matrix}.
\end{proof}
This lemma provides an upper bound on the Euclidean diameter of the set $\C_\gamma$ (denoted as the parameter $D$ in the algorithm described in Section~\ref{sec:first-experiment}).
\begin{equation}\label{eq:DforNuk}
    \|c_t-c^*\|\leq \|c_t\|+\|c^*\|\leq \|c_t\|_*+\|c^*\|_*\leq 2\gamma
\end{equation}
We implemented the exact and inexact LMOs used in Section~\ref{sec:first-experiment} based on the following lemmas.
\begin{lemma}[Linear optimization on nuclear norm-ball]\label{lem:inLmoNuk}
Let $z$ be a $q\times p$ matrix and with singular-value decomposition $z=u\sigma v^\top$. Then 
    \begin{equation*}
        \gamma\cdot u_1 v_1^\top
        \in{\arg\min}_{c \in \C_\gamma} \inner{c}{z} 
        .
    \end{equation*}
    Here, the $u_1, v_1$ are the vectors corresponding to the largest singular value.
\end{lemma}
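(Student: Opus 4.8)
The plan is to reduce the linear minimization over $\C_\gamma$ to a statement about the dual pairing between the nuclear norm and the spectral norm, and then exhibit the explicit minimizer. First I would recall that for any matrices $c$ and $z$ one has the trace-duality inequality $\inner{c}{z} = \operatorname{Tr}(c^\top z) \geq -\|c\|_*\,\|z\|_{\mathrm{op}}$, where $\|z\|_{\mathrm{op}} = \sigma_1$ is the largest singular value of $z$; this is the standard fact that the spectral norm is the dual norm of the nuclear norm (von Neumann's trace inequality). Since every $c \in \C_\gamma$ satisfies $\|c\|_* \leq \gamma$, this immediately gives the lower bound $\inner{c}{z} \geq -\gamma\,\sigma_1$ for all feasible $c$.

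Next I would check that the candidate point $c^\star \coloneqq \gamma\, u_1 v_1^\top$ is feasible and attains this lower bound. Feasibility is clear: $u_1 v_1^\top$ is a rank-one matrix whose unique nonzero singular value is $\|u_1\|\,\|v_1\| = 1$ (because $u_1, v_1$ are unit columns of orthogonal matrices), hence $\|c^\star\|_* = \gamma$. For the value, compute $\inner{c^\star}{z} = \gamma\operatorname{Tr}\!\big((u_1 v_1^\top)^\top z\big) = \gamma\operatorname{Tr}(v_1 u_1^\top z) = \gamma\, u_1^\top z\, v_1$. Using the SVD $z = u\sigma v^\top = \sum_i \sigma_i u_i v_i^\top$ and the orthonormality relations $u_1^\top u_i = \delta_{1i}$, $v_i^\top v_1 = \delta_{i1}$, this collapses to $\gamma\,\sigma_1\, (u_1^\top u_1)(v_1^\top v_1)$... wait, more carefully: $u_1^\top z v_1 = \sum_i \sigma_i (u_1^\top u_i)(v_i^\top v_1) = \sigma_1$. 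Hold on — I have a sign discrepancy to resolve, which is exactly the one subtle point below.

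The one place that needs care is the sign: the lower bound above is $-\gamma\sigma_1$, but the candidate $\gamma u_1 v_1^\top$ evaluates to $+\gamma\sigma_1$, so as literally stated the minimizer should be $-\gamma u_1 v_1^\top$ rather than $+\gamma u_1 v_1^\top$. I would therefore either (i) interpret the lemma as written in the paper's convention — note that in Lemma~\ref{lem:inLmoNuk} the relevant call in the algorithm is $\textsc{In-LMO}_\X\{-Q_t;\delta\}$, so the oracle is applied to a vector whose sign is already negated, and the displayed formula is consistent once one tracks that the SVD is taken of the argument passed to the oracle — or (ii) simply record that $\arg\min_{c\in\C_\gamma}\inner{c}{z} = -\gamma u_1 v_1^\top = \gamma u_1(-v_1)^\top$, and observe that $-v_1$ is still a valid top right-singular vector of $z$ (singular vectors are determined only up to sign), so the stated formula is correct under the standard freedom in choosing the SVD. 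The main obstacle is thus not any deep inequality but pinning down this sign/convention consistently; the von Neumann trace inequality itself I would simply cite (e.g., \cite{golub2013matrix}) rather than reprove.
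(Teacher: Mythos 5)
Your proof is correct and complete; the paper itself offers no argument here, deferring entirely to the citation of Jaggi (2013), so your dual-norm/von Neumann trace-inequality derivation is exactly the standard proof that the citation points to. Your sign observation is also genuinely right and worth flagging: with the SVD written as $z = u\sigma v^\top$ with $\sigma_i \geq 0$, the candidate $\gamma\, u_1 v_1^\top$ evaluates to $+\gamma\sigma_1$ and is therefore the \emph{maximizer} of $\inner{c}{z}$ over $\C_\gamma$, while the minimizer is $-\gamma\, u_1 v_1^\top$; the statement as displayed is only correct under the convention that one of $u_1, v_1$ absorbs the sign (permissible since singular vectors are determined up to a joint sign flip), or equivalently that the top singular pair is taken from the SVD of $-z$. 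This is consistent with how the oracle is actually invoked in Algorithm~1, namely as $\textsc{In-LMO}_\X\{-Q_t;\delta\}$, and with the convention in the cited reference. Your handling of both the feasibility check ($\|u_1 v_1^\top\|_* = 1$) and the value computation via orthonormality is correct, so there is no gap — only the convention point, which you resolved appropriately.
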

\begin{proof}
     See \cite{pmlr-v28-jaggi13}.
\end{proof}
The projection oracle employed in the PGD algorithm in Section~\ref{sec:first-experiment} utilizes the following lemma.
\begin{lemma}[Projection onto the nuclear norm-ball]
    Let $z$ be a $q\times p$ matrix and consider its singular-value decomposition $a=u\sigma v^\top$.
    If $\|z\|_*\geq \gamma$, then the Euclidean projection of $z$ onto the nuclear norm-ball with radius $\gamma$ is given by
    \begin{equation*}
        \operatorname{PO}_{\C_\gamma} \{c\} = \sum_{i=1}^{\min\{q,p\}}
        \max\{0,\sigma_i-\zeta\} u_i  v_i^\top
        ,
    \end{equation*}
    where $\zeta\geq0$ is the solution to equation
    \begin{equation*}
        \sum_{i=1}^{\min\{q,p\}}
        \max\{0,\sigma_i-\zeta\} = \gamma
        .
    \end{equation*}
\end{lemma}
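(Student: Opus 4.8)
The plan is to exploit the unitary invariance of the Frobenius norm (the norm induced by $\inner{a}{b}=\operatorname{Tr}(a^\top b)$) to reduce the matrix projection to the Euclidean projection of the singular-value vector onto a scaled $\ell_1$-ball, and then to solve that finite-dimensional convex program explicitly through its KKT conditions. Write $\operatorname{PO}_{\C_\gamma}\{z\}=\arg\min_{\|c\|_*\le\gamma}\|z-c\|^2$ and expand $\|z-c\|^2=\|z\|^2-2\inner{z}{c}+\|c\|^2$. Since $\|z\|^2=\sum_i\sigma_i^2$ is constant, it suffices to minimize $\|c\|^2-2\inner{z}{c}$ over feasible $c$.

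Let $\tau_1\ge\tau_2\ge\cdots\ge0$ denote the singular values of a candidate $c$, so that $\|c\|^2=\sum_i\tau_i^2$. By von Neumann's trace inequality, $\inner{z}{c}=\operatorname{Tr}(z^\top c)\le\sum_i\sigma_i\tau_i$, with equality attained when $c=\sum_i\tau_i u_i v_i^\top$ shares the singular vectors of $z$ (any consistent choice of vectors at repeated singular values works; this should be stated carefully). Hence for every feasible $c$,
\[
\|z-c\|^2\ge\sum_i(\sigma_i-\tau_i)^2,
\]
and the bound is tight for $c=\sum_i\tau_i u_i v_i^\top$. Because $(\sigma_i)$ is already sorted in decreasing order, a standard rearrangement argument shows a minimizing $(\tau_i)$ may be taken decreasing as well, so that ordering constraint is vacuous. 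The problem therefore reduces to $\min_{\tau\ge0,\ \sum_i\tau_i\le\gamma}\sum_i(\sigma_i-\tau_i)^2$, i.e. the Euclidean projection of the nonnegative vector $\sigma$ onto $\{\tau\ge0:\|\tau\|_1\le\gamma\}$; since $\sigma\ge0$ this coincides with the projection onto the $\ell_1$-ball of radius $\gamma$.

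I would then solve this projection via KKT: with a multiplier $\zeta\ge0$ for $\sum_i\tau_i\le\gamma$ and multipliers $\nu_i\ge0$ for $\tau_i\ge0$, stationarity gives $\tau_i=\sigma_i-\zeta+\nu_i$, and complementary slackness $\nu_i\tau_i=0$ forces $\tau_i=\max\{0,\sigma_i-\zeta\}$. When $\|z\|_*=\sum_i\sigma_i>\gamma$ the multiplier $\zeta$ must be positive (otherwise $\tau=\sigma$ would be infeasible), so $\sum_i\max\{0,\sigma_i-\zeta\}=\gamma$; when $\|z\|_*=\gamma$ the choice $\zeta=0$ works, consistent with the stated equation. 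Existence and uniqueness of $\zeta\ge0$ follow since $\zeta\mapsto\sum_i\max\{0,\sigma_i-\zeta\}$ is continuous, piecewise linear, non-increasing, equal to $\sum_i\sigma_i\ge\gamma$ at $\zeta=0$, vanishing for $\zeta\ge\max_i\sigma_i$, and strictly decreasing on the interval where it is positive. Plugging $\tau_i=\max\{0,\sigma_i-\zeta\}$ into the tight case $c^*=\sum_i\tau_i u_iv_i^\top$ yields exactly the claimed formula.

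The main obstacle is the reduction step: justifying that the minimizer inherits the singular vectors of $z$. This rests on von Neumann's trace inequality together with the characterization of its equality case and a sorting/rearrangement argument to align orderings, with the degenerate case of repeated singular values needing explicit mention. The remaining ingredients — the KKT analysis of $\ell_1$-ball projection and the monotonicity of the soft-threshold sum — are routine and could alternatively be cited from the literature on simplex and $\ell_1$-ball projections.
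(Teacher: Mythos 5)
Your proof is correct, and it is essentially the argument behind the citation the paper gives in place of a proof (Beck's \emph{First-Order Methods}, which reduces matrix projections to vector projections of the singular values via von Neumann's trace inequality and then solves the $\ell_1$-ball projection by KKT/soft-thresholding). The one step you rightly flag as needing care --- attainment of the von Neumann bound --- can in fact be handled without invoking the equality case at all: the lower bound $\|z-c\|^2\geq\sum_i(\sigma_i-\tau_i)^2$ needs only the inequality direction, and tightness for $c^*=\sum_i\tau_i^* u_i v_i^\top$ follows by direct computation since the rank-one matrices $u_iv_i^\top$ are orthonormal in the Frobenius inner product.
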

\begin{proof}
     See \cite{beck2017first}, \cite{garber2021convergence}.
\end{proof}

\section{Further Insights into Inexact LMOs} \label{apx: In-lmo}



The study in \cite{COMBETTES2021565}, particularly Table 1, provides a comprehensive comparison of the computational complexities between inexact linear optimization and projections onto various significant sets.

\textbf{Nuclear norm-ball:} 
Implementing the oracle \(\textsc{In-LMO}_{\C_\gamma}\{-Q;\delta\}\) for a nuclear norm-ball (the set \(\C_\gamma\) defined in Problem~\eqref{eq:R4NR}) includes calculating the largest singular value of the matrix \(w\) and the corresponding vectors (see Lemma~\ref{lem:inLmoNuk}). This computation can be performed using the Lanczos algorithm \cite{lanczos1950iteration, kuczynski1992estimating}. The computational cost of running \(\textsc{In-LMO}_{\C_\gamma}\{-Q;\delta\}\) with this algorithm is 
\begin{equation}\label{eq:arithmatic-complex}
    \mathcal{O}\left(\sqrt{\gamma}\, \Phi(Q) \log(q+p) \frac{\sqrt{\sigma_1(Q)}}{\sqrt{\delta}}\right)
\end{equation}
arithmetic operations \cite{kuczynski1992estimating, pmlr-v28-jaggi13}. Here, $\sigma_1(Q)$ denotes the largest singular value (spectral norm) of the matrix $w$, and $\Phi(Q)$ represents the count of nonzero elements in the matrix $w$.

    


\subsection{Empirical Study of the Inexact LMO Used in Section~\ref{sec:first-experiment}}

We used \texttt{randomized\_svd} from the \texttt{Scikit-learn} library to implement \(\textsc{In-LMO}_{\mathcal{C}_\gamma}\) in Section~\ref{sec:first-experiment}.  
This method does not directly allow for controlling the error parameter \(\delta\), as required by Assumption~\ref{assum:LMO}. To manage the error, the parameters of this method were configured as follows: \texttt{n\_oversamples=1} and \texttt{n\_iter=2}, both set significantly below their default values to enhance computation speed. The resultant errors minimally impacted our algorithm, as demonstrated in the experimental results (see Figures~\ref{fig:Expected loss T_2} and \ref{fig:Computation time T_2 vs error true}).

To evaluate whether our theoretical results can describe the strong performance observed in experiments, the following measurements were conducted. Denote the coefficient matrix at iterations $t = 1, 2, \ldots, T$ as $c_{t+1} \gets \textsc{In-LMO}_{\C_\gamma}\{-Q_t;\delta\}$. Define the empirical error of the inexact linear minimization oracle $\hat{\delta}_t$ as:
\begin{equation*}
    \hat{\delta}_t \coloneqq 
    \langle c_{t+1}, -Q_t \rangle
    - \min_{c \in \C_\gamma} \langle c, -Q_t \rangle
\end{equation*}
Remark~\ref{rmk:order-of-delta} discussed that if $\delta$ is of the order of ${D^2}$, then the performance should not see a significant drop when using an inexact LMO instead of an exact one. 
The parameter \(D\) in the experiments conducted here (and also in Section~\ref{sec:first-experiment}) is set to be \(2\gamma\), as established in Equation~\eqref{eq:DforNuk}. We fixed \(\gamma = 350\) and \(T = 4000\). The other parameters remain the same as in Section~\ref{sec:first-experiment}.

Figure~\ref{fig:delta_histogram} demonstrates the histogram of $\delta_t$ as a percentage of ${D^2}$. Clearly, these values satisfy our requirement of Remark~\ref{rmk:order-of-delta} and show why no significant drop in performance is observed in experiments. It is also interesting to see if Equation~\eqref{eq:arithmatic-complex} is informative. Figure~\ref{fig:delta_sigma_ratio} portrays the histogram of the empirical values ${\sigma_1(Q_t)}/{\delta_t}$. Since we fixed the parameters corresponding to the computational complexity of \texttt{randomized\_svd}, we expect to see that the value $\delta_t$ is correlated to $\Phi(Q_t) \cdot \sigma_1(Q_t)$. Our observations indicate that all of the matrices $Q_t$ are dense. Thus, ignoring the term $\Phi(Q_t)$, Figure~\ref{fig:delta_sigma_ratio} is in line with our expectations.


The next step is to analyze the growth of \(\sigma_1(Q_t)\) over time. Since the largest singular value \(\sigma_1(Q_t)\) can be bounded by the norm \cite{golub2013matrix}: \(\sigma_1(Q_t) \leq \|Q_t\|\), Theorem~\ref{thm:Q-theorem} suggests asymptotic growth proportional to \(\sqrt{t}\).  
Figure~\ref{fig:sigma_growth} shows a plot of \(\left(\sigma_1(Q_t)\right)^2\), experimentally demonstrating how this value changes over time. Initially, \(\sigma_1(Q_t)\) grows at a rate of \(\mathcal{O}(\sqrt{t})\), but it eventually reaches a phase where the growth stops, and the value stabilizes.

\begin{figure}[ht]
\centering
\includegraphics[height=15em]{ 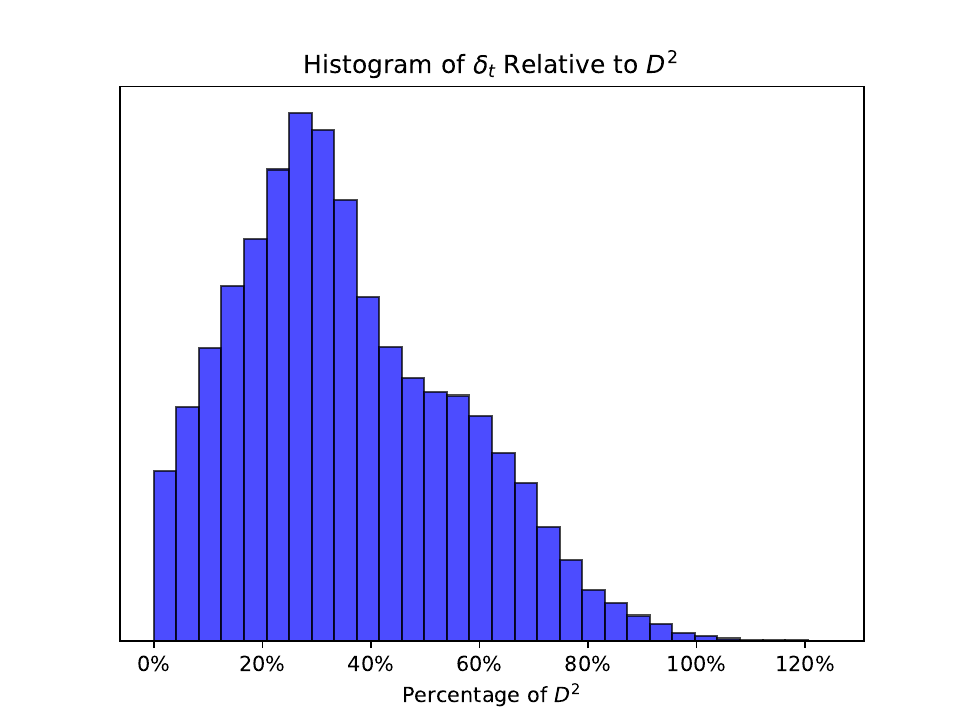}
\caption{Histogram of $\delta_t$ as a percentage of $D^2$, confirming compliance with the requirements of Remark~\ref{rmk:order-of-delta} and explaining the sustained performance levels in empirical tests.}
\label{fig:delta_histogram}
\end{figure}



\begin{figure}[ht]
    \centering
    \begin{minipage}{0.49\textwidth}
        \centering
        \includegraphics[trim={20 10 40 10}, clip, width=\linewidth]{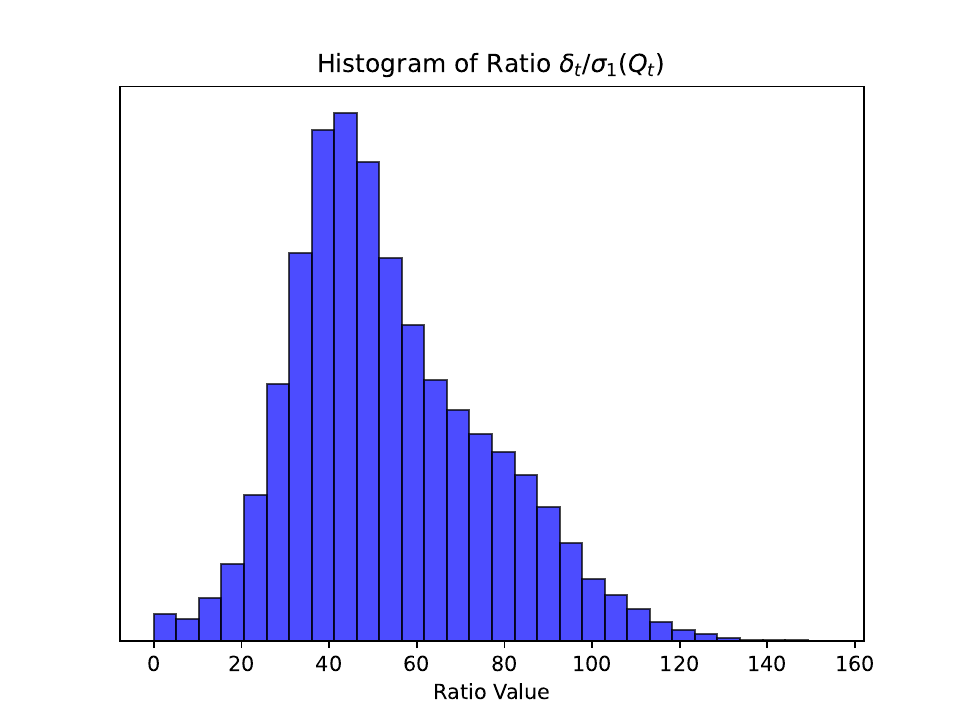}
        \caption{Histogram of the ratio $\frac{\sigma_1(Q_t)}{\delta_t}$ under fixed computational complexity parameters, illustrating the correlation between $\delta_t$ and the largest singular value in matrices $Q_t$.}
        \label{fig:delta_sigma_ratio}
    \end{minipage}
    \hfill
    \begin{minipage}{0.49\textwidth}
        \centering
        \includegraphics[trim={5 5 40 15}, clip, width=\linewidth]{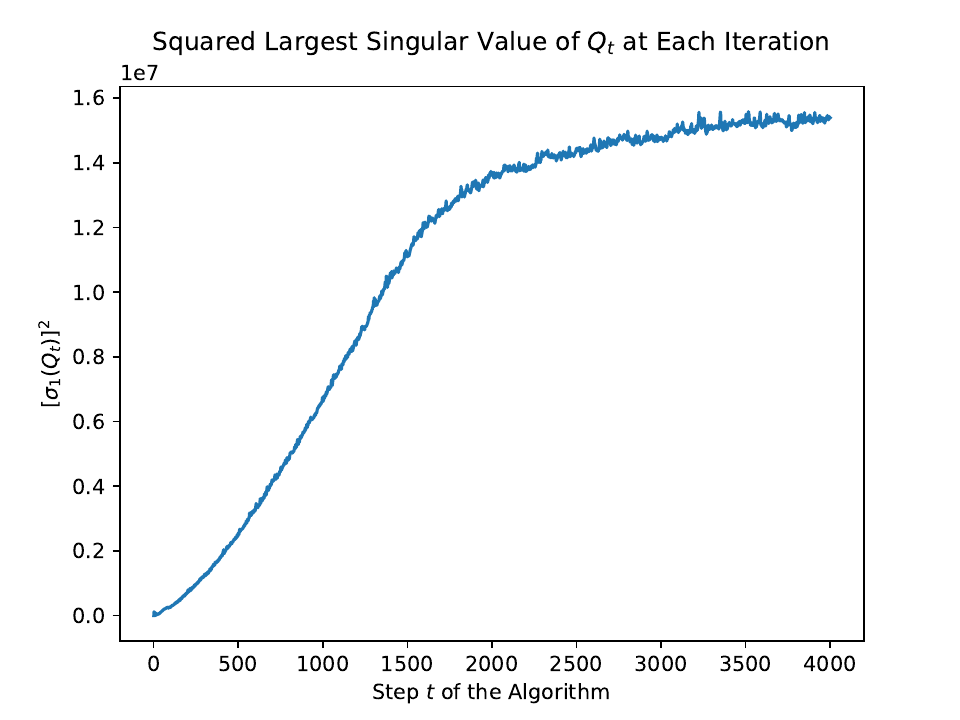}
        \caption{Plot of $(\sigma_1(Q_t))^2$ over time, showing the initial growth of $\sigma_1(Q_t)$ as $\mathcal{O}(\sqrt{t})$ before convergence, indicating a phase shift in its behavior.}
        \label{fig:sigma_growth}
    \end{minipage}
\end{figure}

\section{Remarks on Choosing the Auxiliary Set}
\label{apx: Y}

An important assumption made in this paper is that, on a chosen set $\Y$, both the objective and constraint functions are convex, Lipschitz continuous, and that we have access to their (stochastic) subgradients (see Lemma~\ref{lem:h_lip} and Assumption~\ref{assum:SFO}). In some problems, such as the one considered in Section~\ref{sec:first-experiment} and \ref{sec:second-experiment}, these assumptions are satisfied over the entire space $\Y=\V$. However, there can be instances where this assumption does not hold.

One way to address this issue is to select a set $\Y \neq \V$ that still meets the requirements of Assumption~\ref{assum:PO}. Considering that Assumption~\ref{assum:X_bounded} implies \(\X\) is bounded, the two simplest choices for \(\Y\) are a sphere centered at \(x_1\) with a radius of at most \(D\), and a hypercube centered at \(x_1\) with an edge length of at most \(D\).

While this method covers many examples, there remain some cases where things can go wrong. For instance, the Lipschitz constant over the set $\Y$ might be significantly larger than its value over the set $\X$, which can result in a drop in algorithm performance. Additionally, we may simply not have access to the functions outside of the set $\X$. As mentioned in Section~\ref{sec:Conclusions}, it remains an open question whether there exists an algorithm for this setup that only requires the subgradient of points belonging to \(\X\).


A possible solution to this problem is to extend the functions from \(\X\) to a set \(\Y\) (such as \(\Y = \V\)), while maintaining convexity and Lipschitz continuity without increasing the Lipschitz constant. The existence of such extension is provided by the \emph{McShane-Whitney extension theorem} \cite{lmcs:6105}.\footnote
{We only use part of the McShane-Whitney extension theorem that deals with extending convex functions. This theorem can also extend other kinds of Lipschitz continuous functions. For more details on extending convex functions, see \cite{dragomirescu1992smallest, yan2012extension}.}
This extension was used in the proof of Lemma~\ref{lem:Lagrange}, but here it can also serve as an analytical preprocessing step. Consider $f$ to be a convex and \(L\)-Lipschitz continuous function on \(\X\). One general construction of an extension of $f$ is by solving the following equation for every $x\in\V$ \cite{lmcs:6105, rockafellar2009variational}:
\begin{equation}\label{eq:mack-whit}
    \tilde{f}(x) := \inf_{z \in \X} \{f(z) + L\|x - z\|\}.
\end{equation}
Here, the new function $\tilde{f}$ extends $f$ from $\X$ to the entire $\V$, which means it satisfies the followings:
\begin{itemize}
    \item \(\tilde{f}\) is convex and \(L\)-Lipschitz continuous on \(\V\).
    \item \(\tilde{f}(x) = f(x)\), \(\forall x\in \X\).
\end{itemize}

Notice that solving an optimization problem as described in Equation~\eqref{eq:mack-whit} is at least as challenging as performing a projection onto the set \(\X\). This suggests that part of our algorithm's effectiveness may arise from leveraging additional information that is often available: the \textit{subgradient of the function outside of the feasible set}. The Projected Gradient Descent algorithm does not utilize this extra information; however, as mentioned in Section~\ref{sec:Conclusions}, our algorithm is not unique in doing so.

\textbf{An Example:} This simple example provides intuition on what was discussed in this section. Consider the following one-dimensional problem:
\begin{align*}
\mbox{Minimize:} \quad & f(x) \coloneqq 
\max\left\{\exp\left(-x\right),\,\exp\left(x\right)\right\} \\
\mbox{Subject to:} \quad&x \in \X\coloneqq 
\left\{x \in \mathbb{R} \,:\, -1\leq x\leq 1\right\}
\end{align*}
The function $f$ is convex and $e$-Lipschitz continuous on \(\X\). While this function remains convex on $\mathbb{R}$, it is not Lipschitz continuous. 
Suppose an initial feasible point \(x_1 = \frac{1}{2}\) is given.
The diameter of $\X$ is $2$. Choosing $\Y$ as prescribed in this section results in $\Y = \left\{x \in \mathbb{R} : -2 \leq |x - x_1| \leq 2\right\}$. The function $f$ becomes $e^{5/2}$-Lipschitz continuous on $\Y$. To avoid this increase in the Lipschitz constant, we can derive an extended function $\tilde{f}$ using Equation~\eqref{eq:mack-whit}:
\begin{equation*}
    \tilde{f}(x) 
    = \inf_{z \in \X} \{f(z) + e|x - z|\}
    = 
    \begin{cases} 
        e\cdot x & \text{if } 1< x  \\
        f(x) & \text{if } -1\leq x\leq 1 \\
        -e\cdot x & \text{if }  x <-1
    \end{cases}
\end{equation*}
Figure~\ref{fig:ComparisonFandFtilde} shows these two functions.
\begin{figure}[H]
\centering
\includegraphics[height=17em]{ 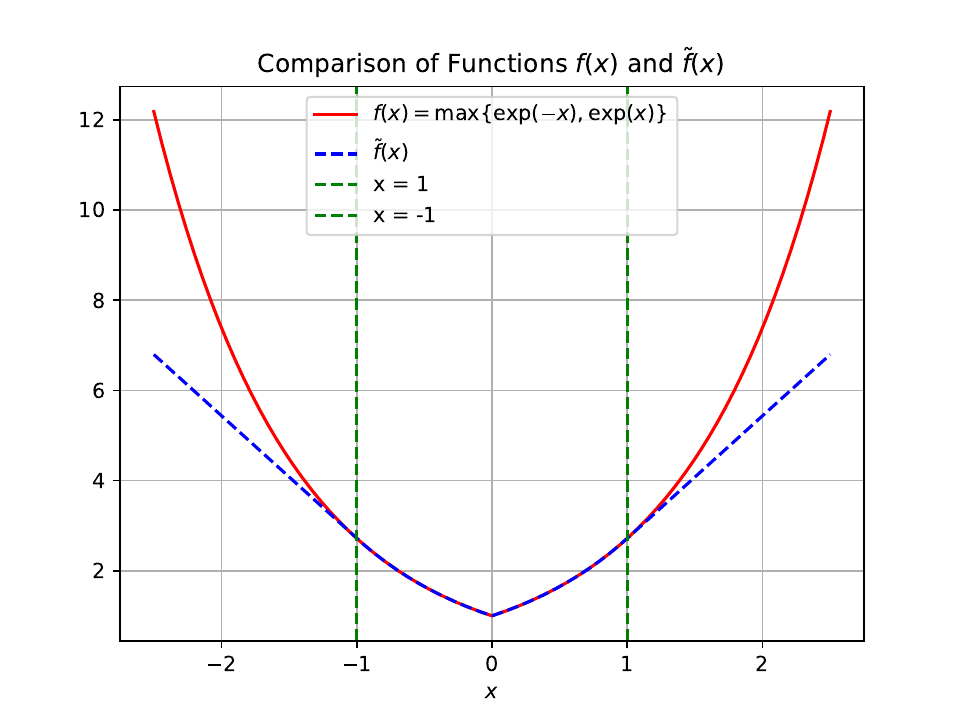}
\caption{Comparison of the original function $f(x)$, defined as $\max\{\exp(-x), \exp(x)\}$, with its extended version $\tilde{f}(x)$, when restricted to $x \in [-1, 1]$. This figure illustrates how $\tilde{f}$ maintains the $e$-Lipschitz continuity beyond the original domain $\X$, contrasting with $f(x)$ which is not Lipschitz continuous on $\mathbb{R}$.}
\label{fig:ComparisonFandFtilde}
\end{figure}

\end{appendices}

\bibliography{sn-bibliography}

\end{document}